\documentclass[11pt,reqno]{amsart}
\usepackage{amssymb}
\usepackage{amsfonts}
\usepackage{amsmath}
\usepackage{amssymb, amsmath}
\usepackage{graphicx}
\topmargin=-0.2in \oddsidemargin3mm \evensidemargin3mm
\textheight235mm \textwidth160mm


\usepackage{color}

\newcommand{\newcom}{\newcommand}

\newcom{\al}{\alpha}
\newcom{\be}{\beta}
\newcom{\eps}{\epsilon}
\newcom{\veps}{\varepsilon}
\newcom{\ga}{\gamma}
\newcom{\Ga}{\Gamma}
\newcom{\ka}{\kappa}
\newcom{\Lam}{\Lambda}
\newcom{\lam}{\lambda}
\newcom{\Om}{\Omega}
\newcom{\om}{\omega}
\newcom{\Si}{\Sigma}
\newcom{\si}{\sigma}
\newcom{\tht}{\theta}
\newcom{\dtri}{\nabla}
\newcom{\tri}{\triangle}
\newcom{\oo}{\infty}
\newcom{\vphi}{\varphi}
\newcom{\cB}{{\mathcal B}}
\newcom{\cC}{{\mathcal C}}
\newcom{\cD}{{\mathcal D}}
\newcom{\cF}{{\mathcal F}}
\newcom{\cL}{{\mathcal L}}
\newcom{\cM}{{\mathcal M}}
\newcom{\cP}{{\mathcal P}}
\newcom{\cS}{{\mathcal S}}
\newcom{\cQ}{{\mathcal Q}}
\newcom{\cT}{{\mathcal T}}
\newcom{\cY}{{\mathcal Y}}
\newcom{\cZ}{{\mathcal Z}}
\newcom{\R}{\mathbb R}
\newcom{\T}{\mathbb T}
\newcom{\N}{\mathbb N}
\newcom{\Z}{\mathbb Z}
\newcom{\C}{\mathbb C}
\newcom{\E}{\mathbb E}

\newcommand{\bu}{u}
\def\R{\mathbb{R}}
\newcommand{\D}{\displaystyle}

\newcommand{\FP}{\mathbf{P}}







\newcom{\f}{\frac}
\newcom{\di}{\displaystyle\int}
\newcom{\ds}{\displaystyle\sum}
\newcom{\dl}{\displaystyle\lim}
\newcom{\ov}{\overline}
\newcom{\sset}{\subset}
\newcom{\wt}{\widetilde}
\newcom{\pa}{\partial}
\newcom{\p}{\partial}
\newcom\na{\nabla}
\newcom{\suml}{\sum\limits}
\newcom{\supl}{\sup\limits}
\newcom{\intl}{\int\limits}
\newcom{\infl}{\inf\limits}
\newcom{\disp}{\displaystyle}
\newcom{\non}{\nonumber}
\newcom{\no}{\noindent}
\newcom{\QED}{$\square$}

\newtheorem{athm}{\bf \t}[section]
\newenvironment{thm} [1] {\def\t{#1}\begin{athm} \bf \rm} {\end{athm}}
\newcom{\bthm}{\begin{thm}}
\newcom{\ethm}{\end{thm}}

\newtheorem{theorem}{Theorem}[section]
\newtheorem{lemma}{Lemma}[section]
\newtheorem{remark}{Remark}[section]
\newtheorem{corollary}{Corollary}[section]

\newtheorem{proposition}{Proposition}[section]

\newcom{\beq}{\begin{equation}}
\newcom{\eeq}{\end{equation}}
\newcom{\ben}{\begin{eqnarray}}
\newcom{\een}{\end{eqnarray}}
\newcom{\beno}{\begin{eqnarray*}}
\newcom{\eeno}{\end{eqnarray*}}
\newcom{\bali}{\begin{aligned}}
\newcom{\eali}{\end{aligned}}

\numberwithin{equation}{section}

\begin{document}

\title[A quasi-linear hyperbolic-parabolic system for vasculogenesis]
{Asymptotic stability of  diffusion waves of a quasi-linear hyperbolic-parabolic model for vasculogenesis}

\author{Qingqing Liu}
\address{(QQL)
School of Mathematics, South China University of Technology,
Guangzhou, 510641, P. R. China} \email{maqqliu@scut.edu.cn}

\author{Hongyun  Peng}
\address{(HYP)
School of Applied Mathematics, Guangdong University of Technology, Guangzhou, 510006, China} \email{penghy010@163.com}

\author{Zhi-An Wang}
\address{(ZAW)
Department of Applied Mathematics, Hong Kong Polytechnic University,
Hung Hom, Kowloon, Hong Kong} \email{mawza@polyu.edu.hk}


\date{\today}
\keywords{Hyperbolic-parabolic model, vasculogenesis, Darcy's law, diffusion waves, spectral analysis}
\subjclass[2020]{35L60, 35L04, 35B40, 35Q92}

\begin{abstract}
{In this paper, we derive the large-time profile of solutions to the Cauchy problem of a hyperbolic-parabolic system modeling the vasculogenesis in $\R^3$.  When the initial data are prescribed in the vicinity of a constant ground state, by constructing a time-frequency Lyapunov functional and employing the Fourier energy method and spectral analysis,   we show that solution of the Cauchy problem tend time-asymptotically to  linear diffusion waves around the constant ground state with algebraic decaying rates under certain conditions on the density-dependent pressure function.}
\end{abstract}

\maketitle


\section{Introduction}

This paper is concerned with the following quasi-linear hyperbolic-parabolic system describing vasculogenesis 
\begin{eqnarray}\label{HPV1}
&&\left\{\begin{aligned}
& \pa_{t}\rho+\nabla\cdot(\rho u)=0,\\
&\pa_t(\rho u)+\nabla\cdot(\rho u\otimes u)+\nabla P(\rho)=\mu \rho\nabla \phi-\alpha \rho u,\\
&\pa_t \phi=D\Delta \phi+a \rho-b \phi,
\end{aligned}\right.
\end{eqnarray}
where $(x,t)\in \R^3\times (0, \infty)$. The model \eqref{HPV1} was proposed in \cite{Gamba} to reproduce key features of experiments of {\it in vitro} formation of blood vessels showing that cells randomly spreading on a gel matrix autonomously organize to a connected vascular network (more extensive modeling details can be found in \cite{ambrosi2005review}), where the unknowns $\rho=\rho(t,x)\geq 0 $ and $
u=u(t,x)\in \mathbb{R}^{3}$ denote the density and velocity of endothelial  cells, respectively, and  $\phi=\phi(t,x)\geq 0$ denotes the concentration of the chemoattractant secreted by the endothelial  cells. The convection term $\nabla \cdot (\rho \bu \otimes \bu)$ models the cell movement persistence (inertial effect), $P(\rho)$ is the cell-density dependent pressure function accounting for the fact that closely packed cells resist to compression due to the impenetrability of cellular matter, the parameter $\mu>0$ measures the intensity of cell response to the chemoattractant concentration gradient and $-\alpha \rho \bu$ corresponds to a damping (friction) force with coefficient $\alpha>0$ as a result of the interaction between cells and the underlying substratum; $D>0$ is the diffusivity of the chemoattractant, the positive constants $a$ and $b$ denote the secretion and death rates of the chemoattractant, respectively.

At the fist sight the hyperbolic-parabolic system \eqref{HPV1} is analogous to the Euler-Poisson system with damping:
\begin{eqnarray} \label{ep}
\left\{\begin{array}{l}
\partial_t \rho+\nabla \cdot (\rho \bu)=0, \\[1mm]
\partial_t (\rho \bu)+\nabla \cdot (\rho \bu \otimes \bu)+\nabla P(\rho)=\mu \rho \nabla \phi -\alpha \rho \bu,\\[1mm]
-\Delta \phi=a \rho-\mathcal{K}(x)
 \end{array} \right.
\end{eqnarray}
where $\rho, \bu$ and $\phi$ denote the density, velocity and potential of the flows, respectively; $P(\rho)$ is the density-dependent pressure function, and $\mathcal{K}(x)\geq 0$ is the background state (doping profile). The damped Euler-Poisson system appears in numerous important applications including the propagation of electrons in semiconductor devices (cf. \cite{markowich2012semiconductor}) and the transport of ions in plasma physics (cf. \cite{choudhuri1998physics}) when $\mu<0$ and $\alpha\geq0$, as well as the collapse of gaseous stars due to self-gravitation \cite{chandrasekhar1957introduction} when $\mu>0$ and $\alpha=0$. Without potential (i.e. $\phi=0$), the system \eqref{ep} reduces to the well-known Euler equations. The system \eqref{HPV1} has several essential differences from \eqref{ep}. First, the parabolic governing equation for $\phi$ in \eqref{HPV1} is harder to deal with than the elliptic governing equation in \eqref{ep}. For example, in one dimension, the sign of $\phi_x$ can be directly determined from $L^1$-norms of $\rho$ and $\mathcal{K}(x)$ from the elliptic equation in \eqref{ep}, but it is elusive from the parabolic equation in \eqref{HPV1}.  Second, the equation for $\phi$ in \eqref{HPV1} has a decay term $-b\phi$ different from a given background state $\mathcal{K}(x) \geq 0$ in \eqref{ep}. The background state $\mathcal{K}(x)$ often directly determines the large-time profile of $\rho$, but the large time behavior of $\rho$ in \eqref{HPV1} is obscure. These structural differences bring substantial differences to the model dynamics and difficulties in the analysis. Many of the mathematical methods developed in the literature for the Euler-Poisson system \eqref{ep} are inapplicable to \eqref{HPV1}, while existing results available to \eqref{HPV1} are rather limited. The goal of this paper is to explore the possible asymptotic profile of solutions to \eqref{HPV1} without vacuum (i.e. $\rho(x,t)>0$ for all $(x,t) \in  \R^3 \times [0,\infty)$), which is closely related to its steady states. It can be check that the system \eqref{HPV1} possesses the following energy functional (cf. \cite{berthelin2016stationary, chavanis2007kinetic})
$$F[\rho, u, \phi]=\frac{1}{2\mu }\int_{\R^3}\rho u^2 d x+\frac{1}{\mu }\int_{\R^3}G(\rho) d x+\frac{1}{2a}\int_{\R^3} (|\nabla \phi|^2+b \phi^2) dx-\int_{\R^3} \rho \phi d x$$
where $ G''(\rho)=P'(\rho)/\rho$, which satisfies
$$\frac{d}{dt} F[\rho, u, \phi]+\frac{\alpha}{\mu}\int_{\R^3}\rho u^2 d x+\frac{1}{a}\int_{\R^3}|\phi_t|^2 dx=0.$$
Thus the stationary solution satisfies $\frac{d}{dt} F[\rho, u, \phi]=0$ which gives rise to $\rho u=0$ and $\phi_t=0$ in $\mathbb{R}^3$. Since we are interested in non-constant profile for $\rho$, $u\equiv 0$ is the only (physical) stationary profile for the velocity $u$. In the literature, the following initial data for \eqref{HPV1} are considered
\begin{eqnarray}\label{1.2}
[\rho,u,\phi]|_{t=0}=[\rho_0,u_{0},\phi_{0}](x)\rightarrow [\bar{\rho}, 0,\bar{\phi}]\quad \mathrm{as} \quad |x|\rightarrow \infty
\end{eqnarray}
for some  constants  $\bar{\rho}>0$ and $\bar{\phi}>0$. When the initial value $[\rho_0, u_0, \phi_0] \in H^s(\R^d) (s>d/2+1)$  is a small perturbation of the constant ground state (i.e. equilibrium) $[\bar{\rho}, 0, \bar{\phi}]$ with $\bar{\rho}>0$ sufficiently small, it was shown in \cite{Russo12, Russo13} that the system \eqref{HPV1} with \eqref{1.2} admits global strong solutions without vacuum  converging to $[\bar{\rho}, 0, \bar{\phi}]$ with an algebraic rate $(1+t)^{-\frac{3}{4}}$ as $t \to \infty$. As $\alpha\rightarrow \infty$ (strong damping), it was formally derived in \cite{chavanis2007kinetic} by the asymptotic analysis and subsequently justified in \cite{FD} that the solution of \eqref{HPV1} converges to that of a parabolic-elliptic Keller-Segel type chemotaxis system.
By adding a viscous term $\Delta u $ to the second equation of \eqref{HPV1}, the linear stability of the constant ground state $[\bar{\rho}, 0, \bar{\phi}]$ was obtained in \cite{kowalczyk2004stability} under the condition
\begin{equation}\label{pressure}
bP'(\bar{\rho})- a \mu \bar{\rho}>0.
\end{equation}
A typical form of $P$ fulfilling \eqref{pressure} is $ P(\rho) = \frac{K}{2}\rho ^{2}$ with $ K > \frac{a \mu}{b}$.
The stationary solutions of \eqref{HPV1} with vacuum (bump solutions) in a bounded interval with zero-flux boundary condition were constructed in \cite{berthelin2016stationary, carrillo2019phase}. The model \eqref{HPV1} with $P(\rho)=\rho$ and periodic boundary conditions in one dimension was numerically explored in \cite{filbet2005approximation}. Recently the stability of transition layer solutions of \eqref{HPV1} on $\R_+=[0,\infty)$  was established in \cite{HPWZ-JLMS}.

In this paper, we shall fully exploit the special structures of \eqref{HPV1} and find the refined large-time profile of solutions of \eqref{HPV1} with \eqref{1.2}.  It is well known that the frictional damping may generate nonlinear diffusive waves for the hyperbolic equations like the $p$-system (cf. \cite{HL,JZ1,JZ2,Mei1,Mei2,N1,N2,NY,ZJ} without vacuum and \cite{DP,HMP, MM,Pa} with vacuum), bipolar hydrodynamical model of semiconductors \cite{GHL}, bipolar Euler-Maxwell equation \cite{DLZ1}, Timoshenko system \cite{IK}, the radiating gas model \cite{LK} and so on.  Inspired by these works, we expect that when the initial data $[\rho_0, u_0, \phi_0]$ are close to the constant ground state $[\bar{\rho},0,\bar{\phi}]$, the asymptotic profile of solutions to \eqref{HPV1} with \eqref{1.2}  under the condition \eqref{pressure} can be approximated by diffusion waves. To see how this is possible, we first observe that due to the external frictional force,  the inertial terms in the momentum equation of \eqref{HPV1} decay to zero faster than other terms so that the pressure gradient force is balanced by the frictional force plus the potential force. Since we are concerned with the large-time dynamics of solutions near the constant equilibrium, we may speculate the time-asymptotic dynamics described by the third equation of \eqref{HPV1} is mainly determined by the equation $a\rho -b \phi=0$. Therefore if we define the mass flux $m=\rho u$, we anticipate that the solution
$(\rho, m, \phi)$ of system \eqref{HPV1} as $t \to \infty$ may behave as the solution $(\tilde{\rho}, \tilde{m}, \tilde{\phi})$ to the following decoupled equations
\begin{equation}\label{Q1eq2}
\begin{cases}
\tilde{\rho}_t-\Delta  q(\tilde{\rho})=0,  &\ \ \text{(Porous medium equation)}\\
\tilde{u}=- \frac{1}{\tilde{\rho}}\nabla q(\tilde{\rho}), &\ \ \text{(Darcy law)}\\
\tilde{\phi}=\frac{a}{b}\tilde{\rho}
\end{cases}
\end{equation}
where $q(\tilde{\rho})=\frac{1}{\alpha}(P(\tilde{\rho})-\frac{a\mu}{2b}\tilde{\rho}^2)$. We remark that the first equation of \eqref{Q1eq2} is a porous medium equation and the second equation of \eqref{Q1eq2} indeed complies with the Darcy law by noticing that the $\tilde{\rho} \tilde{u}$ defines the mass flux. Since we shall focus on the asymptotic profile of solutions near the constant ground state $[\bar{\rho},0,\bar{\phi}]$, we are motivated to linearize the above equations at $[\bar{\rho},0,\bar{\phi}]$.
Notice that the linearization of the first equation of \eqref{Q1eq2} at $\bar{\rho}$ yields a linear heat equation
$$\partial_{t}\tilde{\rho}-\sigma\Delta \tilde{\rho}=0,\ \ \sigma=\frac{bP(\bar{\rho})-a\mu \bar{\rho}}{b \alpha}$$
which has a unique solution in the form of diffusion wave profile: $\tilde{\rho}(t,x)=G(t,\cdot)*(\rho_0-\bar{\rho})$, where
$$G(t,x)=(4\pi \sigma t)^{-3/2}e^{-\frac{|x|^2}{4\sigma t}}$$
 is the heat kernel.  Therefore we derive that a possible time-asymptotic profile of  solutions to \eqref{HPV1}, \eqref{1.2} may be explicitly given by
\begin{equation}\label{def.pp3}
\begin{aligned}
\tilde{\rho}(t,x)=G(t,\cdot)*(\rho_0-\bar{\rho}),\
\tilde{u}(t,x)= -\frac{bP(\bar{\rho})-a\mu \bar{\rho}}{b \bar{\rho}\alpha}\nabla\tilde{\rho},\
\tilde{\phi}(t,x)=\frac{a}{b}\tilde{\rho}.
\end{aligned}
\end{equation}

The main objective of this paper is to justify that the time-asymptotic profile of solutions to \eqref{HPV1} with \eqref{1.2} is given by $[\tilde{\rho}+\bar{\rho}, \tilde{u}, \tilde{\phi}+\frac{a}{b}\bar{\rho}]$ under the condition \eqref{pressure} if the initial value is in the vicinity of the constant ground state $[\bar{\rho}, 0, \frac{a}{b}\bar{\rho}]$.  The specific results are given in the following theorem.

\begin{theorem}\label{thm.GE}
For any $\bar{\rho}>0$ with $\bar{\phi}=\frac{a}{b}\bar{\rho}$, if the condition \eqref{pressure} holds, 
 then there exists a constant $\eps>0$  such that if
\begin{equation*}
\label{thm.as1}
 \|[\rho_0-\bar{\rho},u_{0},\phi_{0}-\bar{\phi}]\|_{H^{N}(\mathbb{R}^{3})} +\|\nabla \phi_0]\|_{H^{N}(\mathbb{R}^{3})}<\eps,
\end{equation*}
with $N\geq 3$, then the Cauchy problem \eqref{HPV1}, \eqref{1.2} admits a unique global solution such that
\begin{eqnarray*}
\begin{aligned}
&[\rho(t,x)-\bar{\rho},u(t,x),\phi(t,x)-\bar{\phi},\nabla \phi]\in
C([0,\infty);H^{N}(\mathbb{R}^{3}))
\end{aligned}
\end{eqnarray*}
satisfying
\begin{equation}\label{gl1}
\|[\rho(t)-\bar{\rho},u(t),\phi(t)-\bar{\phi},\nabla \phi(t)]\|^{2}_{H^N(\mathbb{R}^{3})}
\leq C_{0}
\|[\rho_0-\bar{\rho},u_{0},\phi_{0}-\bar{\phi},\nabla \phi_{0}]\|^{2}_{H^N(\mathbb{R}^{3})}
\end{equation}
for some constant $C_0>0$ independent of $t$. Moreover, there are constants $\delta_{1}>0$ and $ C_{1}>0$ such that if
\begin{eqnarray*}
\|[\rho_0-\bar{\rho},u_{0},\phi_{0}-\bar{\phi},\nabla \phi_{0}]\|_{H^4(\mathbb{R}^{3})}+\|[\rho_0-\bar{\rho},u_{0},\phi_{0}-\bar{\phi}]\|_{L^{1}(\mathbb{R}^{3})}\leq
\delta_{1},
\end{eqnarray*}
then the solution $[\rho(t,x),u(t,x),\phi(t,x)] $
satisfies for all $t\geq 0$ that,
\begin{eqnarray}\label{LQ}
\|[\rho-\bar{\rho},\phi-\bar{\phi}]\|_{L^q(\mathbb{R}^{3})} \leq C_{1}
(1+t)^{-\frac{3}{2}+\frac{3}{2q}},\quad \ \ \|u\|_{L^q(\mathbb{R}^{3})}\leq
C_{1}(1+t)^{-2+\frac{3}{2q}},
\end{eqnarray}
and
\begin{eqnarray} \label{nu.decay3}
\|[\rho-\bar{\rho}-\tilde{\rho},\phi-\bar{\phi}-\tilde{\phi}]\|_{L^q(\mathbb{R}^{3})} \leq C_{1}
(1+t)^{-2+\frac{3}{2q}},\quad \ \ \|u-\tilde{u}\|_{L^q(\mathbb{R}^{3})}\leq
C_{1}(1+t)^{-\frac{5}{2}+\frac{3}{2q}},
\end{eqnarray}
for any  $2\leq q\leq +\infty$.
\end{theorem}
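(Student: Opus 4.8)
The plan is to combine a nonlinear energy method with a spectral analysis of the linearized operator, in three stages: global existence with a uniform $H^N$ bound; algebraic decay of the perturbation; and identification of the diffusion-wave profile with the sharper rate. I first set $[n,u,\psi]=[\rho-\bar{\rho},\,u,\,\phi-\bar{\phi}]$ and carry the auxiliary unknown $w=\nabla\phi$, which solves $\partial_t w-D\Delta w+bw=a\nabla n$; treating $w$ as an independent variable closes the estimate of the force term $\mu\rho\nabla\phi$ in the momentum equation without a derivative loss. The perturbed system reads, schematically, $\partial_t n+\bar{\rho}\,\nabla\cdot u=-\nabla\cdot(nu)$, $\partial_t u+\alpha u+\frac{P'(\bar{\rho})}{\bar{\rho}}\nabla n-\mu\nabla\psi=\mathcal{N}_u$, $\partial_t\psi-D\Delta\psi+b\psi=an$, together with the $w$-equation, where $\mathcal{N}_u$ collects the quadratic terms $(u\cdot\nabla)u$, $\big(\frac{P'(\rho)}{\rho}-\frac{P'(\bar{\rho})}{\bar{\rho}}\big)\nabla n$ and $\mu n\nabla\psi$. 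Energy estimates at the $H^N$ level give dissipation only for $u$ (from $-\alpha u$), for $\nabla\psi$ and $\psi$ (from the parabolic equation), and for $w$; to recover dissipation of $\nabla n$ I add a Kawashima-type interaction term $-\kappa\sum_{|k|\le N-1}\int\partial^k u\cdot\nabla\partial^k n\,dx$ to the energy, exploiting the hyperbolic coupling between $n$ and $u$, and use the coupling $an-b\psi$ to tie the $n$- and $\psi$-dissipations together. This produces a time-frequency Lyapunov functional $\mathcal{E}_N(t)\simeq\|[n,u,\psi,w]\|_{H^N}^2$ satisfying $\frac{d}{dt}\mathcal{E}_N+c\mathcal{D}_N\le C\sqrt{\mathcal{E}_N}\,\mathcal{D}_N$, with $\mathcal{D}_N$ controlling $\|\nabla n\|_{H^{N-1}}^2+\|u\|_{H^N}^2+\|\nabla\psi\|_{H^N}^2+\|w\|_{H^N}^2$; local existence for the symmetric hyperbolic–parabolic system being standard, a continuity argument under the smallness hypothesis yields the global solution and \eqref{gl1}.

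For the decay rates I Fourier-transform the linearized system. The curl part of $u$ obeys $\partial_t\widehat{u^{\perp}}+\alpha\widehat{u^{\perp}}=0$, hence decays purely exponentially, so the core is the $3\times 3$ symbol acting (after extracting the longitudinal part of $\hat u$) on $(\hat n,\ \xi\cdot\hat u,\ \hat\psi)$. Computing its characteristic polynomial and expanding for $|\xi|\to 0$, condition \eqref{pressure} ensures that one eigenvalue equals $-\sigma|\xi|^2+O(|\xi|^4)$ with $\sigma=(bP'(\bar{\rho})-a\mu\bar{\rho})/(b\alpha)>0$ — the diffusion-wave mode — while the other two perturb from $-\alpha$ and $-b$ and keep real parts bounded away from zero, and for $|\xi|$ bounded below all eigenvalues have real part $\le-c$. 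Consequently the linear semigroup $e^{t\mathcal{L}}$ satisfies $\|\nabla^j e^{t\mathcal{L}}U_0\|_{L^2}\lesssim(1+t)^{-3/4-j/2}\|U_0\|_{L^1}+e^{-ct}\|\nabla^j U_0\|_{L^2}$, with an extra $(1+t)^{-1/2}$ on the velocity component, which carries one more power of $|\xi|$ at low frequency. Writing $U=e^{t\mathcal{L}}U_0+\int_0^t e^{(t-s)\mathcal{L}}\mathcal{N}(U)(s)\,ds$, using that $\mathcal{N}(U)$ is quadratic (and that its density component is in divergence form), and running a time-weighted argument that bootstraps $\sup_{0\le s\le t}(1+s)^{3/2}\mathcal{E}_N(s)$ together with the propagated $L^1$ bound, I obtain the $L^2$ decay and then \eqref{LQ} by Gagliardo–Nirenberg interpolation against the uniform $\dot H^N$ bound.

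To isolate the diffusion wave I decompose $[n,u,\psi]=[\tilde{\rho},\tilde{u},\tilde{\phi}]+[n_1,u_1,\psi_1]$ and observe that $[\tilde{\rho},\tilde{u},\tilde{\phi}]$ of \eqref{def.pp3} is exactly the low-frequency heat profile obtained by projecting $e^{t\mathcal{L}}$ applied to $(\rho_0-\bar{\rho},\,0,\,\frac{a}{b}(\rho_0-\bar{\rho}))$ onto the diffusion mode, with the velocity and chemical components replaced by their leading expressions $-\frac{\sigma}{\bar{\rho}}\nabla\tilde{\rho}$ and $\frac{a}{b}\tilde{\rho}$. Hence the remainder $[n_1,u_1,\psi_1]$ is driven by: (a) the quadratic nonlinearity $\mathcal{N}(U)$, whose Duhamel contribution meets the improved rate thanks to \eqref{LQ}; (b) the difference between $e^{t\mathcal{L}}U_0$ and its leading heat profile — at low frequency this gap is $O(|\xi|^2)$ relative to the heat kernel (from the $O(|\xi|^4)$ eigenvalue correction and the higher-order corrections to the spectral projection), so it gains a factor $(1+t)^{-1/2}$ — together with the exponentially decaying high-frequency and curl parts; and (c) the mismatch in the initial data and in the velocity/chemical relations (the discrepancy between $u_0$ and $-\frac{\sigma}{\bar{\rho}}\nabla(\rho_0-\bar{\rho})$, and between $\phi_0-\bar{\phi}$ and $\frac{a}{b}(\rho_0-\bar{\rho})$), which is again transported only by the exponentially decaying or higher-order parts of the semigroup. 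Summing these through Duhamel and the linear estimates of the previous step yields \eqref{nu.decay3}, the extra $(1+t)^{-1/2}$ for $u-\tilde{u}$ coming once more from the additional $|\xi|$-factor on the velocity component.

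I expect the main obstacle to be this refined spectral step: one must show that the \emph{full} remainder of the linear evolution relative to the heat profile — not only the eigenvalue correction but also the error in the spectral projection and the coupling to the fast modes ($-\alpha$ and $-b$) — decays one half power faster in every $L^q$, $2\le q\le\infty$, while simultaneously verifying that the slowest-decaying pieces of the nonlinear source (in particular products involving $n$ and $\psi$) are, after the Duhamel convolution, no slower than the target rate. Keeping track of the extra $|\xi|$-factor attached to the velocity throughout both the linear and the nonlinear estimates is the delicate bookkeeping that makes the sharp rates in \eqref{nu.decay3} work.
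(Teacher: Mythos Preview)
Your plan is essentially the same as the paper's: a Kawashima-type energy functional (the paper's $\mathcal{E}_N$ in \eqref{de.E} contains exactly your cross term $\langle\partial^l u,\nabla\partial^l\rho\rangle$ and tracks $\|\nabla\phi\|_{H^N}$, i.e.\ your $w$) for global existence, a curl/longitudinal spectral decomposition with the eigenvalue expansion $\lambda_1=-\sigma|k|^2+O(|k|^4)$, $\lambda_{2,3}\to -b,-\alpha$, then Duhamel plus a time-weighted bootstrap for \eqref{LQ}, and finally the splitting $(U-e^{tL}U_0)+(e^{tL}U_0-\tilde U)$ for \eqref{nu.decay3}. Two small corrections: after dividing the momentum equation by $\rho$ the chemotaxis force becomes $\mu\nabla\phi$, so there is no quadratic term $\mu n\nabla\psi$ in $\mathcal{N}_u$; and the low-frequency gap $|\hat\rho-\hat{\tilde\rho}|$, $|\hat\phi-\hat{\tilde\phi}|$ is $O(|k|)$ (not $O(|k|^2)$), which is precisely what yields the extra $(1+t)^{-1/2}$ you state, while for $u$ it is $O(|k|^2)$.
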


\begin{remark}\em{
It is shown in Theorem \ref{thm.GE} that the solution of \eqref{HPV1} with \eqref{1.2} converges to the constant ground state $[\bar{\rho}, 0, \bar{\phi}]$ in $L^2(\R^3)$ with the following convergence rate
\begin{equation*}
\|[\rho-\bar{\rho},\phi-\bar{\phi}\|_{L^2(\R^3)}\leq C (1+t)^{-\frac{3}{4}}, \ \ \ \ \|u\|_{L^2(\R^3)}\leq C (1+t)^{-\frac{5}{4}},
\end{equation*}
which comply with the results shown in \cite{Russo12, Russo13} under the condition that $\bar{\rho}>0$ is small. The main contributions of this paper consist of three parts. First we discover the diffusion wave profile $[\tilde{\rho}, \tilde{u}, \tilde{\phi}]$ given by \eqref{def.pp3} under the assumption \eqref{pressure} without assuming $\bar{\rho}$ is small. Second we show that the solution $[\rho, u, \phi]$ of the Cauchy problem \eqref{HPV1} with \eqref{1.2} can converge to the asymptotic profile $[\tilde{\rho}+\bar{\rho}, \tilde{u}, \tilde{\phi}+\bar{\phi}]$ in $L^q$-norm for any $2\leq q \leq \infty$. The works \cite{Russo12, Russo13} only show the convergence of solutions to the constant ground state $[\bar{\rho}, 0, \bar{\phi}]$ in $L^2$-norm.  Third, we prove that the convergence rate to the asymptotic profile $[\tilde{\rho}+\bar{\rho}, \tilde{u}, \tilde{\phi}+\bar{\phi}]$ is faster than  the one to the constant ground state $[\bar{\rho}, 0, \bar{\phi}]$, as \eqref{nu.decay3} compared to \eqref{LQ}.
}
\end{remark}

\begin{remark}\em{
The proof of Theorem \ref{thm.GE} critically depends on the structure of the $\phi$-equation of \eqref{HPV1} with $b\ne 0$. Hence the results of Theorem \ref{thm.GE} can not be carried over to the damped Euler-Poisson system \eqref{ep} and hence we identify an essential difference between damped Euler-Poisson system \eqref{ep} and the hyperbolic-parabolic system \eqref{HPV1}.
}
\end{remark}

To prove our results, we first employ the Fourier energy method to construct  a time-frequency Lyapunov functional and  perform the delicate spectral analysis on the linearized problem to find its refined decay structure.  Then we decompose  solutions of the Cauchy problem \eqref{HPV1} into two parts: linearized solutions around the asymptotic profile $[\tilde{\rho}, \tilde{u}, \tilde{\phi}]$  and perturbed solutions around the linearized solutions, and finally manage to obtain the faster decay rates by using the refined decay properties derived in the forgoing spectral analysis. As far as we know these methods have not been used for the hyperbolic-parabolic system \eqref{HPV1} in the literature.

The rest of the paper is organized as follows. In Section \ref{sec2}, we  study some decay properties of the linearized system around the constant ground state by the Fourier analysis. In Section \ref{sec4}, we conduct  spectral analysis for the linearized system and derive the asymptotic decay rates of linearized solutions towards the linear wave profiles. In Section \ref{sec5}, we first derive some {\it a priori} estimates to obtain the global existence of solutions by the method of energy estimates and  find the time asymptotic decay rate of solutions towards the constant ground state, and finally prove our main results stated in Theorem \ref{thm.GE}.

\section{Decay property of the linearized system}\label{sec2}

In this section, we study the time-decay property of solutions to the linearized system based on the  Fourier energy method. The main purpose here is to exploit the linear dissipative structure to see how the condition \eqref{pressure} plays an important role in the energy estimates.  Notice that the key estimate \eqref{s4.j} in this section will be also used to explore the time-decay property of solutions with the high-frequency in the subsequent sections.
Before we proceed, we introduce some notations frequently used in the paper. \\

\noindent{\bf Notations.} Throughout this paper,
$C$ denotes a generic positive (generally large) constant and $\lambda$ denotes some positive (generally small) constant, where both $C$ and
$ \lambda$ may take different values in different places. For two
quantities $a$ and $b$, $a\sim b$ means $\lambda a \leq  b \leq
\frac{1}{\lambda} a $ for a generic constant $0<\lambda<1$. For any
integer $m\geq 0$, we use $H^{m}$, $\dot{H}^{m}$ to denote the usual
Sobolev space $H^{m}(\mathbb{R}^{3})$. For simplicity, the norm of $ H^{m}$ is denoted by
$\|\cdot\|_{m} $ with $\|\cdot \|=\|\cdot\|_{0}$. We use $
\langle\cdot, \cdot \rangle$ to denote the inner product of the
Hilbert space $ L^{2}(\mathbb{R}^{3})$, i.e.
\begin{eqnarray*}
\langle f,g \rangle=\int_{\mathbb{R}^{3}} f(x)g(x)dx, \forall  f, g  \in L^2(\mathbb{R}^{3}).
\end{eqnarray*}
For a multi-index $l=(l_{1},l_{2},l_{3})$, we denote $\partial^{l}=\partial^{l_{1}}_{x_{1}}\partial^{l_{2}}_{x_{2}}\partial^{l_{3}}_{x_{3}}$ and
the length of $l$ is $|l|=l_{1}+l_{2}+l_{3}$. For simplicity, we denote
\begin{equation*}
\|[A,B]\|_{X}=\|A\|_{X}+\|B\|_{X},
\end{equation*}
for some Sobolev space $X$.

\subsection{Decay structure of the linearized system}
In this section, we  use $[\rho_1,u,\phi_1]$ to denote
the solution of the linearized system of \eqref{HPV1} around the constant ground state $[\bar{\rho}, 0, \bar{\phi}]$
\begin{equation}\label{HPVL}
\left\{\begin{aligned}
& \pa_{t}\rho_1+\bar{\rho}\nabla\cdot u=0,\\
&\pa_t u+\frac{P'(\bar{\rho})}{\bar{\rho}}\nabla \rho_1+\alpha u -\mu\nabla\phi_1=0,\\
&\pa_t \phi_1-D\Delta \phi_1-a\rho_1+b\phi_1=0,
\end{aligned}\right.
\end{equation}
with initial data
\begin{eqnarray}\label{NLI}
[\rho_1,u,\phi_1]|_{t=0}=[\rho_{1,0},u_{0},\phi_{1,0}]=[\rho_0-\bar{\rho},u_{0},\phi_{0}-\bar{\phi}].
\end{eqnarray}

The goal of this section is to apply the Fourier energy method  to the Cauchy problem \eqref{HPVL}-\eqref{NLI} to find a
time-frequency Lyapunov functional proportional to $|\hat{U}(t,k)|^{2}+|k|^2|\hat{\phi}|^{2}$ with a dissipative structure.
In the rest of this section, for the simplicity of notations, we shall use $[\rho, u, \phi]$ to denote the solution  $[\rho_1, u, \phi_1]$ of $\eqref{HPVL}$-$\eqref{NLI}$, correspondingly, $ [\rho_{0}, u_{0}, \phi_{0}]$ to denote $[\rho_{1,0}, u_{0}, \phi_{1,0}]$ unless other stated.
Let us present the main result of this section as follows.

\begin{theorem}\label{lyapunov} Let $U(t,x)=[\rho, u, \phi]$ be a well-defined solution to the system
	$\eqref{HPVL}$-$\eqref{NLI}$. Then there is a time-frequency Lyapunov
	functional $\mathcal{E}(\hat{U}(t,k))$ with
	\begin{eqnarray}\label{equver}
	\mathcal {E}(\hat{U}(t,k))\sim
	|[\hat{\rho},\hat{u},\hat{\phi}]|^{2}+|k|^2|\hat{\phi}|^{2}
	\end{eqnarray}
	such that for any $ t>0$ and $k\in\mathbb{R}^{3}$ the Lyapunov inequality
	\begin{eqnarray}\label{Lyainequlity1}
	\frac{d}{dt}\mathcal {E}(\hat{U}(t,k))+\dfrac{\lambda|k|^{2}}{1+|k|^{2}}\mathcal
	{E}(\hat{U}(t,k))\leq 0
	\end{eqnarray}
	holds for some constant $\lambda>0$.
\end{theorem}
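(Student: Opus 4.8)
The plan is to pass to the Fourier variable $k\in\R^3$ and build $\mathcal{E}$ as a manifestly positive ``physical'' functional plus one small Kawashima-type interaction term carrying the weight $1/(1+|k|^2)$. Applying the Fourier transform in $x$ to $\eqref{HPVL}$ turns the problem into the parametrized ODE system $\hat\rho_t=-\bar\rho\,ik\cdot\hat u$, $\hat u_t=-\frac{P'(\bar\rho)}{\bar\rho}\,ik\,\hat\rho-\alpha\hat u+\mu\,ik\,\hat\phi$, $\hat\phi_t=-(D|k|^2+b)\hat\phi+a\hat\rho$. I would take the main functional
\[
\mathcal{E}_0(\hat U)=\frac{P'(\bar\rho)}{\bar\rho^2}\,|\hat\rho|^2+|\hat u|^2-\frac{2\mu}{\bar\rho}\,\mathrm{Re}(\hat\rho\,\overline{\hat\phi})+\frac{\mu}{a\bar\rho}(D|k|^2+b)\,|\hat\phi|^2 .
\]
Its $(\hat\rho,\hat\phi)$-block has determinant proportional to $P'(\bar\rho)(D|k|^2+b)-a\mu\bar\rho$, which at $k=0$ is $bP'(\bar\rho)-a\mu\bar\rho$; thus condition $\eqref{pressure}$ is exactly what renders $\mathcal{E}_0$ positive definite, and since that determinant also grows like $1+|k|^2$ one obtains $\mathcal{E}_0\sim|[\hat\rho,\hat u,\hat\phi]|^2+|k|^2|\hat\phi|^2$ with $k$-independent constants. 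A direct computation — the Fourier counterpart of the energy identity recalled in the Introduction — yields the clean identity $\frac{d}{dt}\mathcal{E}_0=-2\alpha|\hat u|^2-\frac{2\mu}{a\bar\rho}|\hat\phi_t|^2$, after expanding $|\hat\phi_t|^2=|a\hat\rho-(D|k|^2+b)\hat\phi|^2$. So $\mathcal{E}_0$ alone dissipates only $\hat u$ and $\hat\phi_t$.

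To recover the missing dissipation of $\hat\rho$ I would add $\mathcal{E}_1(\hat U)=-\frac{1}{1+|k|^2}\mathrm{Re}(ik\cdot\hat u\,\overline{\hat\rho})$, which, using the mass and momentum equations, satisfies $\frac{d}{dt}\mathcal{E}_1=-\frac{P'(\bar\rho)/\bar\rho}{1+|k|^2}|k|^2|\hat\rho|^2+\frac{\bar\rho}{1+|k|^2}|k\cdot\hat u|^2+\frac{\alpha}{1+|k|^2}\mathrm{Re}(ik\cdot\hat u\,\overline{\hat\rho})+\frac{\mu|k|^2}{1+|k|^2}\mathrm{Re}(\hat\phi\,\overline{\hat\rho})$. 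I then set $\mathcal{E}=\mathcal{E}_0+\kappa\,\mathcal{E}_1$ with $\kappa>0$ small; since $|\mathcal{E}_1|\le\frac14(|\hat u|^2+|\hat\rho|^2)$ the equivalence $\eqref{equver}$ is preserved. In $\frac{d}{dt}\mathcal{E}$, the term $\frac{\kappa\bar\rho}{1+|k|^2}|k\cdot\hat u|^2\le\kappa\bar\rho|\hat u|^2$ and the cross term $\frac{\kappa\alpha}{1+|k|^2}\mathrm{Re}(ik\cdot\hat u\,\overline{\hat\rho})$ are absorbed by $-2\alpha|\hat u|^2$ (the latter by Young's inequality, distributing it between $|\hat u|^2$ and the new $\hat\rho$-dissipation).

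The delicate term is $\frac{\kappa\mu|k|^2}{1+|k|^2}\mathrm{Re}(\hat\phi\,\overline{\hat\rho})$. Substituting $\hat\phi=(a\hat\rho-\hat\phi_t)/(D|k|^2+b)$ from the $\phi$-equation rewrites it as $\frac{\kappa\mu a|k|^2}{(1+|k|^2)(D|k|^2+b)}|\hat\rho|^2-\frac{\kappa\mu|k|^2}{(1+|k|^2)(D|k|^2+b)}\mathrm{Re}(\hat\phi_t\,\overline{\hat\rho})$. The first piece is bounded by $\frac{a\mu\bar\rho}{bP'(\bar\rho)}\cdot\frac{\kappa P'(\bar\rho)/\bar\rho}{1+|k|^2}|k|^2|\hat\rho|^2$, i.e.\ it consumes exactly the fraction $\frac{a\mu\bar\rho}{bP'(\bar\rho)}<1$ — again by $\eqref{pressure}$ — of the $\hat\rho$-dissipation produced by $\mathcal{E}_1$, leaving a genuine dissipation $\sim\kappa\tilde\tau\,\frac{|k|^2}{1+|k|^2}|\hat\rho|^2$ with the effective diffusion coefficient $\tilde\tau=\frac{P'(\bar\rho)}{\bar\rho}-\frac{a\mu}{b}=\frac{bP'(\bar\rho)-a\mu\bar\rho}{b\bar\rho}>0$; the second piece is controlled by $\frac{2\mu}{a\bar\rho}|\hat\phi_t|^2$ and a small fraction of the $\hat\rho$-dissipation via Young's inequality, uniformly in $k$ because $|k|^2/(D|k|^2+b)$ is bounded. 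After fixing $\kappa$ small this gives $\frac{d}{dt}\mathcal{E}\le-\alpha|\hat u|^2-\frac{\mu}{a\bar\rho}|\hat\phi_t|^2-\frac{\kappa\tilde\tau}{2}\,\frac{|k|^2}{1+|k|^2}|\hat\rho|^2=:-\mathcal{D}$. It remains to check $\frac{|k|^2}{1+|k|^2}\mathcal{E}\lesssim\mathcal{D}$: since $\mathcal{E}\sim|\hat\rho|^2+|\hat u|^2+(1+|k|^2)|\hat\phi|^2$, only the term $|k|^2|\hat\phi|^2$ needs attention, and from $(D|k|^2+b)\hat\phi=a\hat\rho-\hat\phi_t$ one has $|k|^2|\hat\phi|^2\le\frac{2|k|^2}{(D|k|^2+b)^2}\big(a^2|\hat\rho|^2+|\hat\phi_t|^2\big)\lesssim\frac{|k|^2}{1+|k|^2}|\hat\rho|^2+|\hat\phi_t|^2$, because $|k|^2(1+|k|^2)/(D|k|^2+b)^2$ and $|k|^2/\big(D(D|k|^2+b)\big)$ are bounded in $k$. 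Hence $\mathcal{D}\ge\lambda\,\frac{|k|^2}{1+|k|^2}\mathcal{E}$ for some $\lambda>0$, which is $\eqref{Lyainequlity1}$.

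I expect the main obstacle to be the middle stage: choosing the weights in $\mathcal{E}_0$ so that $\eqref{pressure}$ simultaneously yields positive definiteness and the exact dissipation identity for $\frac{d}{dt}\mathcal{E}_0$, and then handling the $\hat\rho$--$\hat\phi$ coupling term uniformly in $|k|$ — precisely, the fact that the ``bad'' part of $\mathrm{Re}(\hat\phi\,\overline{\hat\rho})$ borrows exactly the fraction $a\mu\bar\rho/(bP'(\bar\rho))$ of the available $\hat\rho$-dissipation, which is $<1$ if and only if $\eqref{pressure}$ holds and which produces the correct effective diffusion $\tilde\tau$ appearing in the diffusion-wave profile.
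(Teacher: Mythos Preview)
Your proof is correct and follows the same overall architecture as the paper's: the principal functional $\mathcal{E}_0$ is exactly the paper's (up to the harmless factor $1/\bar\rho$), the identity $\frac{d}{dt}\mathcal{E}_0=-2\alpha|\hat u|^2-\frac{2\mu}{a\bar\rho}|\hat\phi_t|^2$ is the paper's \eqref{LinearF2}, and the Kawashima correction $\mathcal{E}_1=-\frac{1}{1+|k|^2}\mathrm{Re}(ik\cdot\hat u\,\overline{\hat\rho})$ coincides with the first half of the paper's correction term.

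The one genuine difference is in how the problematic cross term $\frac{\kappa\mu|k|^2}{1+|k|^2}\mathrm{Re}(\hat\phi\,\overline{\hat\rho})$ coming from $\frac{d}{dt}\mathcal{E}_1$ is absorbed. The paper does not substitute the $\phi$-equation as you do; instead it adds a \emph{second} correction $\frac{\kappa\mu}{2a}\frac{|k|^2}{1+|k|^2}|\hat\phi|^2$, obtained by testing the $\phi$-equation against $\frac{\mu}{a}ik\hat\phi$, so that the combined quadratic form $\frac{P'(\bar\rho)}{\bar\rho}|k|^2|\hat\rho|^2+\frac{b\mu}{a}|k|^2|\hat\phi|^2-2\mu\,\mathrm{Re}(ik\hat\rho\mid ik\hat\phi)$ appears and is handled in one stroke by positive definiteness of the $2\times2$ matrix \eqref{matrix}. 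This yields $\frac{|k|^2}{1+|k|^2}|\hat\phi|^2$ directly in the dissipation, so the final comparison $\frac{|k|^2}{1+|k|^2}\mathcal{E}\lesssim\mathcal{D}$ is immediate. Your route --- inserting $\hat\phi=(a\hat\rho-\hat\phi_t)/(D|k|^2+b)$ and splitting into a piece that eats the fraction $a\mu\bar\rho/(bP'(\bar\rho))<1$ of the $\hat\rho$-dissipation plus a remainder controlled by $|\hat\phi_t|^2$, then recovering $|k|^2|\hat\phi|^2$ at the end from the same algebraic relation --- is perfectly valid and has the virtue of making the effective diffusion coefficient $\tilde\tau=\frac{bP'(\bar\rho)-a\mu\bar\rho}{b\bar\rho}$ appear explicitly; the paper's matrix argument is slightly cleaner and avoids the extra bookkeeping with $|k|^2/(D|k|^2+b)^2$, but both exploit \eqref{pressure} at the same decisive point.
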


\begin{proof}
	For an integrable function $f:\mathbb{R}^{3} \rightarrow
	\mathbb{R}$, its Fourier transform is defined by
	\begin{eqnarray*}
		\hat{f}(k)=\int_{\mathbb{R}^{3}}{\exp({-ix\cdot k})}f(x)dx,\ \ \
		x\cdot k:=\sum_{j=1}^{3}x_{j}k_{j},\ \ k\in \mathbb{R}^{3},
	\end{eqnarray*}
	where $i=\sqrt{-1}\in\mathbb{C}$ is  the imaginary unit. For two
	complex numbers or vectors $a$ and $b$, $(a|b)$ denotes the dot
	product of $a$ with the complex conjugate of $b$. Taking the Fourier
	transform in $x$ for $\eqref{HPVL}$, we find
	$\hat{U}=[\hat{\rho},\hat{u},\hat{\phi}]$
	satisfies
	\begin{equation}\label{LinearF}
	\left\{\begin{aligned}
	& \pa_{t}\hat{\rho}+\bar{\rho}ik\cdot \hat{u}=0,\\
	&\pa_t \hat{u}+\frac{P'(\bar{\rho})}{\bar{\rho}}ik\hat{\rho}+\alpha \hat{u} -\mu ik\hat{\phi}=0,\\
	&\pa_t \hat{\phi}+D|k|^2\hat{\phi}-a\hat{\rho}+b\hat{\phi}=0.
	\end{aligned}\right.
	\end{equation}
	
	First of all, it is straightforward to obtain from the first two
	equations of $\eqref{LinearF}$ that
	\begin{eqnarray}\label{LinearF1}
	\begin{aligned}
	& \frac{1}{2}\frac{d}{dt} \left(\frac{P'(\bar{\rho})}{\bar{\rho}}|\hat{\rho}|^2+\bar{\rho}|\hat{u}|^2\right)+\alpha \bar{\rho}|\hat{u}|^2\\
	=&\mathrm{Re}(\mu ik\hat{\phi}|\bar{\rho}\hat{u}) =-\mathrm{Re}(\mu\hat{\phi}|\bar{\rho}ik\cdot\hat{u})\\
	=&\mathrm{Re}(\mu\hat{\phi}| \hat{\rho}_{t}) =\frac{d}{dt}\mathrm{Re}(\mu\hat{\phi}| \hat{\rho})-\mathrm{Re}(\mu\hat{\phi_{t}}| \hat{\rho}).
	\end{aligned}
	\end{eqnarray}
	By taking the complex dot product of the third equation of
	$\eqref{LinearF}$ with $\frac{\mu}{a}\hat{\phi}_{t}$, and retaining the real part, one has
	\begin{equation*}
	\frac{d}{dt}\left\{\frac{\mu D}{2a}|k|^2|\hat{\phi}|^2+\frac{b\mu}{2a}|\hat{\phi}|^2\right\}+\frac{\mu}{a}|\hat{\phi}_{t}|^2 =\mathrm{Re}(\hat{\rho}|\mu\hat{\phi_{t}}),
	\end{equation*}
	which along with  \eqref{LinearF1} implies
	\begin{eqnarray}\label{LinearF2}
	\frac{1}{2}\frac{d}{dt} \left\{\frac{P'(\bar{\rho})}{\bar{\rho}}|\hat{\rho}|^2+\bar{\rho}|\hat{u}|^2+\frac{\mu D}{a}|k|^2|\hat{\phi}|^2+\frac{b\mu}{a}|\hat{\phi}|^2-2\mathrm{Re}(\mu\hat{\phi}| \hat{\rho})\right\}+\alpha\bar{\rho}|\hat{u}|^2+\frac{\mu}{a}|\hat{\phi}_{t}|^2=0.
	\end{eqnarray}
	
	Secondly, by taking the complex dot product of the second equation of  $\eqref{LinearF}$ with $ik\hat{\rho}$, replacing
	$\partial_{t}\hat{\rho}$ by $-\bar{\rho}i k\cdot\hat{u}$ from the first equation of
	$\eqref{LinearF}$ and retaining the real part, one has
	\begin{equation}\label{LinearF3}
	\frac{d}{dt}\mathrm{Re}(\hat{u}|ik\hat{\rho})+\frac{P'(\bar{\rho})}{\bar{\rho}}|k|^2|\hat{\rho}|^2-\mu\mathrm{Re}(ik\hat{\phi}|ik\hat{\rho})=-\alpha\mathrm{Re}(\hat{u}|ik\hat{\rho})+\bar{\rho}|k\cdot\hat{u}|^2.
	\end{equation}
	Multiplying the third equation of $\eqref{LinearF}$ by $ik$, then taking the complex dot product of the resultant equation with $\frac{\mu}{a}ik\hat{\phi}$, and keeping the real part,  one has
	\begin{equation}\label{LinearF4}
	\frac{\mu}{2a}\frac{d}{dt}\left(|k|^2|\hat{\phi}|^2\right)+\frac{\mu D}{a}|k|^4|\hat{\phi}|^2+\frac{b\mu}{a}|k|^2|\hat{\phi}|^2=\mathrm{Re}\mu(ik\hat{\rho}|ik\hat{\phi}).
	\end{equation}
	Taking summation of \eqref{LinearF3} and \eqref{LinearF4} gives
	\begin{equation}\label{LinearF5}
	\begin{aligned}
	&\frac{d}{dt}\left\{\mathrm{Re}(\hat{u}|ik\hat{\rho})+\frac{\mu}{2a}|k|^2|\hat{\phi}|^2\right\}+\frac{\mu D}{a}|k|^4|\hat{\phi}|^2\\
	&+\frac{P'(\bar{\rho})}{\bar{\rho}}|k|^2|\hat{\rho}|^2+\frac{b\mu}{a}|k|^2|\hat{\phi}|^2-2\mathrm{Re}\mu(ik\hat{\rho}|ik\hat{\phi})\\
	=&-\alpha\mathrm{Re}(\hat{u}|ik\hat{\rho})+\bar{\rho}|k\cdot\hat{u}|^2.
	\end{aligned}
	\end{equation}
	Since $bP'(\bar{\rho})-a\mu\bar{\rho}>0$, the following matrix
	\begin{equation}\label{matrix}
	\left(
	\begin{array}{cc}
	\frac{P'(\bar{\rho})}{\bar{\rho}}& -\mu \\[3mm]
	-\mu  & \frac{b\mu}{a}
	\end{array}
	\right)
	\end{equation}
	is positive definite, which yields a  positive constant $C_{1}>0$ such that
	\begin{equation}\label{LinearF51}
	\frac{P'(\bar{\rho})}{\bar{\rho}}|k|^2|\hat{\rho}|^2+\frac{b\mu}{a}|k|^2|\hat{\phi}|^2-2\mathrm{Re}\mu(ik\hat{\rho}|ik\hat{\phi})\geq C_{1}\left(|k|^2|\hat{\rho}|^2+|k|^2|\hat{\phi}|^2\right).
	\end{equation}
	Applying \eqref{LinearF51} into \eqref{LinearF5} along with the Cauchy-Schwarz inequality imply
	\begin{equation*}
	\frac{d}{dt}\left\{\mathrm{Re}(\hat{u}|ik\hat{\rho})+\frac{\mu}{2a}|k|^2|\hat{\phi}|^2\right\}+\frac{\mu D}{a}|k|^4|\hat{\phi}|^2+\frac{C_{1}}{2}|k|^2|\hat{\rho}|^2+C_{1}|k|^2|\hat{\phi}|^2
	\leq C(1+|k|^2)|\hat{u}|^2,
	\end{equation*}
	which, multiplied by $1/(1+|k|^{2})$, gives
	\begin{equation}\label{Linear4}
	\frac{d}{dt}\left\{\frac{\mathrm{Re}(\hat{u}|ik\hat{\rho})}{1+|k|^2}+\frac{\mu}{2a}\frac{|k|^2}{1+|k|^2}|\hat{\phi}|^2\right\}+\frac{\mu D}{a}\frac{|k|^4}{1+|k|^2}|\hat{\phi}|^2+ \frac{\lambda |k|^2}{1+|k|^2}|[\hat{\rho},\hat{\phi}]|^2\leq C|\hat{u}|^2.
	\end{equation}
	
	Finally, let's define
	\begin{equation*}
	\begin{aligned}
	\mathcal{E}(\hat{U}(t,k))=&\frac{P'(\bar{\rho})}{\bar{\rho}}|\hat{\rho}|^2+\bar{\rho}|\hat{u}|^2+\frac{\mu D}{a}|k|^2|\hat{\phi}|^2+\frac{b\mu}{a}|\hat{\phi}|^2-2\mathrm{Re}(\mu \hat{\phi}| \hat{\rho})\\
	&+ \kappa\left\{\frac{\mathrm{Re}(\hat{u}|ik\hat{\rho})}{1+|k|^2}+\frac{\mu}{2a}\frac{|k|^2}{1+|k|^2}|\hat{\phi}|^2\right\}
	\end{aligned}
	\end{equation*}
	for a constant $0<\kappa\ll 1$ to be determined.
	Recall that the matrix \eqref{matrix} is positive definite.
	Then there exist two positive constants $C_{2}, C_{3}>0$ such that
	\begin{equation*}
	C_{2}\left(|\hat{\rho}|^2+|\hat{\phi}|^2\right)\leq \frac{P'(\bar{\rho})}{\bar{\rho}}|\hat{\rho}|^2+\frac{b\mu}{a}|\hat{\phi}|^2-2\mathrm{Re}(\mu \hat{\phi}| \hat{\rho})\leq C_{3}\left(|\hat{\rho}|^2+|\hat{\phi}|^2\right).
	\end{equation*}
	Notice that as long as $ 0<\kappa\ll 1$ is small enough, then $\mathcal{E}(\hat{U}(t,k))\sim |\hat{U}(t)|^{2}+|k|^2|\hat{\phi}|^2$ holds
	true and \eqref{equver} is proved. The sum of $\eqref{LinearF2}$ with $\eqref{Linear4}\times
	\kappa$ gives
	\begin{eqnarray}\label{DJF6}
	\partial_{t}\mathcal{E}(\hat{U}(t,k))+\lambda|\hat{u}|^2
	+\lambda\dfrac{|k|^{2}}{1+|k|^{2}}|[\hat{\rho},\hat{\phi}]|^2
	+\dfrac{\lambda|k|^{4}}{1+|k|^{2}}|\hat{\phi}|^{2}
	\leq 0.
	\end{eqnarray}
	Therefore,
	$\eqref{Lyainequlity1}$ follows from $\eqref{DJF6}$ by the observation
	\begin{eqnarray*}
		\lambda|\hat{u}|^2
		+\lambda\dfrac{|k|^{2}}{1+|k|^{2}}|[\hat{\rho},\hat{\phi}]|^2
		+\dfrac{\lambda|k|^{4}}{1+|k|^{2}}|\hat{\phi}|^{2}
		\geq \dfrac{\lambda|k|^{2}}{1+|k|^{2}}\left(|\hat{U}|^{2}+|k|^2|\hat{\phi}|^2\right).
	\end{eqnarray*}
	This completes the proof of Theorem \ref{lyapunov}.
\end{proof}

Theorem \ref{lyapunov} directly yields the pointwise
time-frequency estimate on  $|\hat{U}(t,k)|$ in terms of
initial data moduli $|\hat{U}_{0}(k)|$ and $|k||\hat{\phi}_{0}(k)|$.

\begin{corollary}
	Let $U(t,x)$, $t\geq0$, $x\in\mathbb{R}^{3}$ be a well-defined
	solution to the system $\eqref{HPVL}$-$\eqref{NLI}$. Then, there
	are $\lambda>0$, $ C>0$  such that
	\begin{eqnarray}\label{s4.j}
	|\hat{U}(t,k)|\leq C {\exp\left({-\tfrac{\lambda|k|^{2}t}{1+|k|^{2}}}\right)}(|\hat{U_{0}}(k)|+|k||\hat{\phi}_{0}|)
	\end{eqnarray}
	holds for any $t\geq 0$ and $k\in \mathbb{R}^{3}$.
\end{corollary}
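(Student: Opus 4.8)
The plan is to derive the pointwise bound \eqref{s4.j} directly from the Lyapunov inequality \eqref{Lyainequlity1} together with the norm equivalence \eqref{equver} established in Theorem \ref{lyapunov}. Since the coefficients in \eqref{LinearF} depend on $k$ only through $|k|$, the frequency variable $k$ is merely a parameter in the ODE system, so \eqref{Lyainequlity1} is an honest differential inequality in $t$ for each fixed $k$. Applying Gr\"onwall's inequality to \eqref{Lyainequlity1} yields
\begin{equation*}
\mathcal{E}(\hat{U}(t,k))\leq \exp\left(-\tfrac{\lambda|k|^{2}t}{1+|k|^{2}}\right)\mathcal{E}(\hat{U}(0,k)),
\end{equation*}
where I have used that $|k|^{2}/(1+|k|^{2})$ is independent of $t$.

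Next I would unwind the norm equivalence \eqref{equver} on both ends. On the left, $\mathcal{E}(\hat{U}(t,k))\gtrsim |\hat{U}(t,k)|^{2}+|k|^{2}|\hat{\phi}(t,k)|^{2}\geq |\hat{U}(t,k)|^{2}$, so
\begin{equation*}
|\hat{U}(t,k)|^{2}\leq C\,\mathcal{E}(\hat{U}(t,k)).
\end{equation*}
On the right, at $t=0$ the equivalence gives $\mathcal{E}(\hat{U}(0,k))\leq C\bigl(|\hat{U}(0,k)|^{2}+|k|^{2}|\hat{\phi}(0,k)|^{2}\bigr)$, and by the initial condition \eqref{NLI} (with the notational convention $[\rho,u,\phi]=[\rho_1,u,\phi_1]$, $[\rho_0,u_0,\phi_0]=[\rho_{1,0},u_0,\phi_{1,0}]$ in force in this section) this equals $C\bigl(|\hat{U}_{0}(k)|^{2}+|k|^{2}|\hat{\phi}_{0}(k)|^{2}\bigr)$. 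Combining the three displays,
\begin{equation*}
|\hat{U}(t,k)|^{2}\leq C\exp\left(-\tfrac{\lambda|k|^{2}t}{1+|k|^{2}}\right)\bigl(|\hat{U}_{0}(k)|^{2}+|k|^{2}|\hat{\phi}_{0}(k)|^{2}\bigr).
\end{equation*}
Taking square roots and using $\sqrt{a^{2}+b^{2}}\leq a+b$ for $a,b\geq 0$ (and absorbing the constant), together with $\sqrt{\exp(\cdot)}=\exp(\cdot/2)$ which only changes the value of $\lambda$, produces exactly \eqref{s4.j}.

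There is essentially no hard step here: the corollary is a mechanical consequence of Theorem \ref{lyapunov}, and the only points requiring a word of care are (i) noting that the damping rate $\lambda|k|^{2}/(1+|k|^{2})$ is $t$-independent so Gr\"onwall applies cleanly, (ii) tracking that passing from squared quantities to unsquared ones halves the exponent and merges the two initial-data terms additively, both harmless since $\lambda$ and $C$ are allowed to change, and (iii) invoking the in-section notational identification so that $\hat{U}_0$ and $\hat{\phi}_0$ refer to the Fourier transforms of the perturbations $\rho_0-\bar\rho$, $u_0$, $\phi_0-\bar\phi$. I would present this as a three-line proof.
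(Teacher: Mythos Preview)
Your proposal is correct and matches the paper's approach exactly: the paper states the corollary as a direct consequence of Theorem \ref{lyapunov} without writing out any details, and the mechanical Gr\"onwall-plus-norm-equivalence argument you outline is precisely what is intended. Your three-line presentation is appropriate.
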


Based on the pointwise
time-frequency estimate $\eqref{s4.j}$,  it is also straightforward to obtain the
$L^{p}$-$L^{q}$ time-decay property of the Cauchy problem
$\eqref{HPVL}$-$\eqref{NLI}$. Formally, the solution of
the Cauchy problem $\eqref{HPVL}$-$\eqref{NLI}$ is denoted by
\begin{eqnarray*}
	\begin{aligned}
		U(t)=\left[\rho,u,\phi\right]={e^{tL}}U_{0}, 
	\end{aligned}
\end{eqnarray*}
where ${e^{tL}} $ for $t\geq 0$ is called the linearized
solution operator corresponding to the system \eqref{HPVL}.

\begin{corollary}[see \cite{Duan} for instance]
	Let $1\leq p,r\leq 2\leq q\leq \infty$, $\ell\geq 0$ and let $m\geq
	1$ be an integer. Define
	\begin{equation}\label{thm.decay.1}
	\left[\ell+3\left(\frac{1}{r}-\frac{1}{q}\right)\right]_{+}
	=\left\{\begin{array}{ll}
	\ell, & \  \text{if $\ell$ is an integer and $r=q=2$},\\[3mm]
	{[}\ell+3(\frac{1}{r}-\frac{1}{q}){]}_{-}+1,  &\ \text{otherwise},
	\end{array}\right.
	\end{equation}
	where $[\cdot]_{-}$ denotes the integer part of  the argument.
	Then $e^{tL}$ satisfies
	the following time-decay property:
	\begin{eqnarray*}
		&&\begin{aligned}
			\|\nabla^{m}e^{Lt} U_0\|_{L^q}\leq&  C
			(1+t)^{-\tfrac{3}{2}(\tfrac{1}{p}-\tfrac{1}{q})-\tfrac{m}{2}}
			\|U_0\|_{L^p}+ C
			(1+t)^{-\tfrac{3}{2}(\tfrac{1}{p}-\tfrac{1}{q})-\tfrac{m+1}{2}}\|\phi_0\|_{L^p}\\
		&+ Ce^{-\lambda t}\|\nabla^{m+[3(\tfrac{1}{r}-\tfrac{1}{q})]_+}{U}_0\|_{L^r}+Ce^{-\lambda t}\|\nabla^{m+1+[3(\tfrac{1}{r}-\tfrac{1}{q})]_+}{\phi}_0\|_{L^r}
		\end{aligned}
	\end{eqnarray*}
	for any $t\geq 0$, where  $C=C(m,p,r,q,\ell)>0$ is a constant.
\end{corollary}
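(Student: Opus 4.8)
The entire statement is a consequence of the pointwise time–frequency bound \eqref{s4.j}, so the plan is to transfer that bound from the Fourier side to $L^q$ via Hausdorff–Young and then to optimize by a low/high frequency splitting. Since $q\ge 2$, Hausdorff–Young gives $\|\nabla^m e^{Lt}U_0\|_{L^q}\le C\big\||k|^m\hat U(t,k)\big\|_{L^{q'}}$ with $\tfrac1q+\tfrac1{q'}=1$, and inserting \eqref{s4.j} reduces everything to estimating
\begin{equation*}
\Big\||k|^m e^{-\frac{\lambda|k|^2 t}{1+|k|^2}}\big(|\hat U_0(k)|+|k|\,|\hat\phi_0(k)|\big)\Big\|_{L^{q'}(\mathbb{R}^3)}.
\end{equation*}
I would then split $\mathbb{R}^3=\{|k|\le 1\}\cup\{|k|\ge 1\}$: on the low-frequency set $\frac{|k|^2}{1+|k|^2}\ge\frac12|k|^2$, while on the high-frequency set $\frac{|k|^2}{1+|k|^2}\ge\frac12$, so the exponential factor is dominated by $e^{-\frac\lambda2|k|^2t}$ and by $e^{-\frac\lambda2 t}$, respectively.

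On the low-frequency piece, I would apply Hölder with $\frac1{q'}=\frac1s+\frac1{p'}$, i.e.\ $\frac1s=\frac1p-\frac1q\in[0,1]$ (legitimate because $p\le2\le q$) and $\frac1{p'}=1-\frac1p$, to peel off the Gaussian multiplier: the $|\hat U_0|$–term is bounded by $\big\||k|^m e^{-\frac\lambda2|k|^2t}\big\|_{L^s(|k|\le1)}\,\|\hat U_0\|_{L^{p'}}$. A parabolic rescaling $k\mapsto k/\sqrt t$, together with the trivial bound for $t$ small, yields $\big\||k|^m e^{-\frac\lambda2|k|^2t}\big\|_{L^s(|k|\le1)}\le C(1+t)^{-\frac m2-\frac3{2s}}=C(1+t)^{-\frac m2-\frac32(\frac1p-\frac1q)}$, while Hausdorff–Young gives $\|\hat U_0\|_{L^{p'}}\le C\|U_0\|_{L^p}$ for $1\le p\le2$. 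Treating the $|k||\hat\phi_0|$–term identically, with $m$ replaced by $m+1$, produces the additional factor $(1+t)^{-1/2}$, which accounts for the second term in the asserted estimate.

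On the high-frequency piece the exponential already contributes $e^{-\frac\lambda2 t}$, so it remains to bound $\big\||k|^m\hat U_0\big\|_{L^{q'}(|k|\ge1)}$ by a Sobolev norm of $U_0$. I would write $|k|^m\hat U_0=|k|^{-\beta}\cdot|k|^{m+\beta}\hat U_0$ with $\beta=\big[3(\tfrac1r-\tfrac1q)\big]_+$ as in \eqref{thm.decay.1} (the case $\ell=0$), and apply Hölder with $\frac1{q'}=\frac1\sigma+\frac1{r'}$, i.e.\ $\frac1\sigma=\frac1r-\frac1q$, $\frac1{r'}=1-\frac1r$: the point is that $\beta$ is precisely the least value making $\||k|^{-\beta}\|_{L^\sigma(|k|\ge1)}$ finite (with $\beta=0$ in the degenerate case $r=q=2$, where $\sigma=\infty$), and then $\||k|^{m+\beta}\hat U_0\|_{L^{r'}}\le C\|\nabla^{m+\beta}U_0\|_{L^r}$ by Hausdorff–Young and the $L^r$–boundedness of Riesz transforms. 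The same argument applied to $|k||\hat\phi_0|$ gives $Ce^{-\lambda t}\|\nabla^{m+1+[3(\frac1r-\frac1q)]_+}\phi_0\|_{L^r}$. Summing the low- and high-frequency contributions for both $U_0$ and $\phi_0$ yields the claimed inequality.

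The only genuinely delicate point is this last reduction of the Fourier-multiplier norm $\||k|^{m+\beta}\hat U_0\|_{L^{r'}}$ to the honest derivative norm $\|\nabla^{m+\beta}U_0\|_{L^r}$ when $r'\neq2$: this is exactly what forces the piecewise definition \eqref{thm.decay.1} — one must gain enough extra derivatives for $|k|^{-\beta}$ to be integrable at infinity in $L^\sigma$ — and the endpoint $r=1$ is the one case requiring a little extra care; this bookkeeping is the part borrowed wholesale from \cite{Duan}. Everything else is a routine combination of Hausdorff–Young, Hölder, and Gaussian scaling, and the condition \eqref{pressure} enters only through Theorem \ref{lyapunov} and \eqref{s4.j}, not through the present argument.
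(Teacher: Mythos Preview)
Your plan is correct and is precisely the standard route: the paper does not prove this corollary at all but simply cites \cite{Duan}, and what you have written is exactly the argument one finds there---split at $|k|=1$, use Hausdorff--Young plus H\"older with a Gaussian multiplier on the low frequencies, and absorb $|k|^{-\beta}$ into $L^\sigma(|k|\ge1)$ on the high frequencies with $\beta=[3(\tfrac1r-\tfrac1q)]_+$. One small remark: you do not actually need Riesz transforms for the step $\||k|^{m+\beta}\hat U_0\|_{L^{r'}}\le C\|\nabla^{m+\beta}U_0\|_{L^r}$, since $|k|^{n}\le C\sum_{|\alpha|=n}|k^\alpha|$ pointwise and $|k^\alpha\hat U_0|=|\widehat{\partial^\alpha U_0}|$, so Hausdorff--Young alone suffices even at $r=1$; this also dissolves the ``extra care'' you flagged at that endpoint.
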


\section{Spectral analysis on the linearized system} \label{sec4}

In order to find a refined large-time asymptotic profile of solutions to \eqref{HPV1} with \eqref{1.2}, we conduct the spectral analysis for the linearized system \eqref{HPVL}-\eqref{NLI}.

\subsection{Preparations}
Let us first recall our linearized problem \eqref{HPVL}-\eqref{NLI}. In the rest of this section, for the simplicity of notations, we shall use $[\rho, u, \phi]$ to denote the solution  $[\rho_1, u, \phi_1]$ of $\eqref{HPVL}$-$\eqref{NLI}$, correspondingly, $ [\rho_{0}, u_{0}, \phi_{0}]$ to denote $[\rho_{1,0}, u_{0}, \phi_{1,0}]$ unless other stated. We now derive the asymptotic equations that one may expect in the large time.  By the idea of the asymptotic analysis, one may expect that the asymptotic profile of the linearized system \eqref{HPVL} satisfies
\begin{equation*}
\left\{
\begin{aligned}
&\partial_{t}\tilde{\rho}+\bar{\rho}\nabla \cdot \tilde{u}=0,\\
& \frac{P'(\bar{\rho})}{\bar{\rho}}\nabla\tilde{\rho}+\alpha\tilde{u}-\mu\nabla\tilde{\phi}=0,\\
&-a\tilde{\rho}+b\tilde{\phi}=0,
\end{aligned}\right.
\end{equation*}
with initial data
\begin{equation*}
\tilde{\rho}|_{t=0}=\rho_0.
\end{equation*}
Therefore, $\tilde{\rho}$, $\tilde{u}$ and $\tilde{\phi}$ are determined
according to the following equations 
\begin{eqnarray}\label{sol.rhob}
\left\{
\begin{aligned}
&\partial_{t}\tilde{\rho}-\dfrac{bP'(\bar{\rho})-a\mu\bar{\rho}}{b\alpha}\Delta
\tilde{\rho}=0,\\
&\begin{aligned}
\tilde{u}=&-\frac{bP'(\bar{\rho})-a\mu\bar{\rho}}{b\alpha\bar{\rho}}\nabla\rho,
\end{aligned}\\
&\tilde{\phi}=\frac{a}{b}\tilde{\rho},
\end{aligned}\right.
\end{eqnarray}
where initial data $\tilde{u}_{0}$ and $\tilde{\phi}_{0}$  are determined by $\tilde{\rho}_0$ in terms of the last two equations of \eqref{sol.rhob}, respectively. With the fact $bP'(\bar{\rho})-a\mu\bar{\rho}>0$, the first equation of \eqref{sol.rhob} is essentially a heat equation.  The diffusion wave $[\tilde{\rho},\tilde{u},\tilde{\phi}]$ defined in \eqref{def.pp3} is the solution of \eqref{sol.rhob}.  Its solution can also be expressed by the following Fourier transform:
\begin{eqnarray}
&&\label{rhobar}\hat{\tilde{\rho}}=\exp{\left(-\frac{bP'(\bar{\rho})-a\mu\bar{\rho}}{b\alpha}|k|^{2}t\right)}\hat{\rho}_{0},\ \ \ \ \ \ \ \ \ \ \ \ \ \ \ \ \ \ \ \ \ \ \ \ \ \ \ \ \
\end{eqnarray}
and
\begin{eqnarray}
&&\label{ubar}\hat{\tilde{u}}=\exp{\left(-\frac{bP'(\bar{\rho})-a\mu\bar{\rho}}{b\alpha}|k|^{2}t\right)}\left(-\frac{bP'(\bar{\rho})-a\mu\bar{\rho}}{b\alpha\bar{\rho}}\right)ik\hat{\rho}_{0},\\
&&\label{Ebar}\hat{\tilde{\phi}}=\exp{\left(-\frac{bP'(\bar{\rho})-a\mu\bar{\rho}}{b\alpha}|k|^{2}t\right)}\left(\frac{a}{b}\hat{\rho}_{0}\right).
\end{eqnarray}
The above expressions will be used later.

\subsection{Spectral representation}

\subsubsection{Asymptotic expansions and expressions}

In this subsection, we further explore the explicit solution  $U=[\rho,u,\phi]=e^{tL}U_{0}$ to the linearized Cauchy problem \eqref{HPVL}-\eqref{NLI}. Let us rewrite the system as two decoupled subsystems governing the time evolution of $[\rho, \nabla\cdot u, \phi]$  and $\nabla\times u$.
Taking the curl for the second equation of \eqref{HPVL}, one has
\begin{equation*}
\partial_{t}(\nabla\times u)+\alpha (\nabla\times u)=0.
\end{equation*}
In terms of the Fourier transform in $x$, one has
\begin{equation*}
\partial_{t}(ik \times \hat{u})+\alpha (ik\times \hat{u})=0,
\end{equation*}
with the initial data
\begin{equation*}
ik \times \hat{u}|_{t=0}=ik\times \hat{u}_{0}.
\end{equation*}
It is easy to obtain
\begin{equation*}
-\tilde{k}\times (\tilde{k} \times \hat{u})=e^{-\alpha t}\{-\tilde{k}\times (\tilde{k} \times \hat{u}_{0})\},
\end{equation*}
where $\tilde{k}=k/|k|$ for $|k|\neq 0$.

Taking the divergence for the second equation of \eqref{HPVL}, we get the equations of $[\rho,\nabla\cdot u, \phi]=:U_{\parallel}(t,x)$
\begin{equation*}
\left\{\begin{aligned}
& \pa_{t}\rho+\bar{\rho}\nabla\cdot u=0,\\
&\pa_t (\nabla \cdot u)+\frac{P'(\bar{\rho})}{\bar{\rho}}\Delta \rho+\alpha (\nabla\cdot u) -\mu \Delta\phi=0,\\
&\pa_t \phi-D\Delta \phi-a\rho+b\phi=0,
\end{aligned}\right.
\end{equation*}
with initial data
\begin{eqnarray*}
	[\rho,\nabla\cdot u,\phi]|_{t=0}=[\rho_{0},\nabla\cdot u_{0},\phi_{0}].
\end{eqnarray*}
Applying the Fourier transformation to the above equation, we have
\begin{equation}\label{LinearFD}
\left\{\begin{aligned}
& \pa_{t}\hat{\rho}+\bar{\rho}ik\cdot \hat{u}=0,\\
&\pa_t (ik\cdot \hat{u})-\frac{P'(\bar{\rho})}{\bar{\rho}}|k|^2\hat{\rho}+\alpha (ik\cdot \hat{u}) +\mu |k|^2\hat{\phi}=0,\\
&\pa_t \hat{\phi}+D|k|^2\hat{\phi}-a\hat{\rho}+b\hat{\phi}=0,
\end{aligned}\right.
\end{equation}
with the initial data
\begin{equation}\label{FluidFDI}
\hat{U}_{\parallel}(t,k)|_{t=0}=\hat{U}_{\parallel 0}(k):=[\hat{\rho}_{0},ik\cdot \hat{u}_{0}, \hat{\phi}_{0}].
\end{equation}
Then the solution to \eqref{LinearFD}-\eqref{FluidFDI} can be written as
\begin{equation*}
\hat{U}_{\parallel}(t,k)^T={e^{A(|k|)t}}\hat{U}_{\parallel 0}(k)^{T},
\end{equation*}
with matrix $A(|k|)$ defined by
\begin{equation*}
A(|k|)=:\left(\begin{array} {ccc}
0\ \  &  -\bar{\rho} &  0\\[3mm]
\frac{P'(\bar{\rho})}{\bar{\rho}}|k|^2\ \ & -\alpha  \ \ &  -\mu |k|^2\\[3mm]
a \ \ &0 & -b-D|k|^2
\end{array} \right).
\end{equation*}
By a direct computation, we see that the characteristic polynomial of
$A(|k|)$ is
\begin{eqnarray}
g(\lambda)=:\det(\lambda
I-A(|k|)) &=&\lambda^{3}+\underbrace{\left(b+D|k|^2+\alpha\right)}_{c_2}\lambda^2+\underbrace{\left(b\alpha+D\alpha|k|^2+ P'(\bar{\rho})|k|^2\right)}_{c_1}\lambda\notag\\[1.5mm]
&&+\underbrace{\left(b+D|k|^2\right)P'(\bar{\rho})|k|^2-a\mu\bar{\rho}|k|^2}_{c_0}\notag\\[1.5mm]
&=:&\lambda^{3}+c_{2}\lambda^2+c_{1}\lambda+c_{0}.\label{eigenpo}
\end{eqnarray}
One can find some elementary properties of the function $g(\lambda)$ as follows:
\begin{itemize}
\renewcommand{\itemsep}{5pt}
  \item $g(0)=\left(bP'(\bar{\rho})-a\mu\bar{\rho}\right)|k|^2+D|k|^2P'(\bar{\rho})|k|^2>0$ as $k\neq 0$;
  \item $g(-(b+\alpha+D|k|^2))=-(b+D|k|^2)^2\alpha-(b+D|k|^2)\alpha^2-P'(\bar{\rho})\alpha|k|^2-a\mu\bar{\rho}|k|^2<0;$
  \item $g'(\lambda)=3\lambda^{2}+2\left(b+D|k|^2+\alpha\right)\lambda+\left(b\alpha+D\alpha|k|^2+ P'(\bar{\rho})|k|^2\right)>0
$ for $\lambda\geq 0;$
\item $g'(\lambda)=\lambda^{2}+2(\lambda+b+D|k|^2+\alpha)\lambda+\left(b\alpha+D\alpha|k|^2+ P'(\bar{\rho})|k|^2\right)\geq b\alpha+D\alpha|k|^2+ P'(\bar{\rho})|k|^2>0,$
for $\lambda\leq -(b+\alpha+D|k|^2);$
\item $g(\lambda)$ is strictly increasing for $\lambda\leq
-(b+\alpha+D|k|^2)$ or $\lambda\geq 0$.
\end{itemize}
The above properties imply that the equation $g(\lambda)=0$ has at
least one negative real root lying  in $\left(-(b+\alpha+D|k|^2),0\right)$. We can distinguish several
possible cases for the roots by using the discriminant,
\begin{equation*}
\Delta=18c_{2}c_{1}c_{0}-4c_{2}^{3}c_{0}+c_{2}^{2}c_{1}^{2}-4c_{1}^{3}-27c_{0}^{2}.
\end{equation*}
\begin{itemize}
  \item  $\Delta>0$, then $g(\lambda)=0$ has three distinct real
  roots;
  \item  $\Delta<0$, then $g(\lambda)=0$ has one real root and non-real two complex conjugate roots;
 \item  $\Delta=0$, then $g(\lambda)=0$ has multiple roots which are all real.
\end{itemize}
Here the term with the highest power of $\Delta$ is $-4D^4P'(\bar{\rho})|k|^{10}$, which implies that $\Delta$ is a polynomial of $|k|$ with degree $10$.  Hence there exist at most finite number of values of  $|k|$ such that $\Delta=0$. Hereafter, we exclude these finite number of values of  $|k|$ since they will not affect the $L^{p}$-estimates of solutions to the linearized equations. We analyze the roots of the equation
$g(\lambda)=\det(\lambda
I-A(|k|))=0$ and their asymptotic properties as $|k|\rightarrow 0$.
Clearly the eigenvalues $\lambda_i (i=1,2,3)$ of $A(|k|)$ satisfy
\begin{eqnarray*}
	\begin{aligned}
		&\lambda_{1}+\lambda_{2}+\lambda_{3}=-b-D|k|^2-\alpha,\\
		&\lambda_{1}\lambda_{2}\lambda_{3}=-\left(b+D|k|^2\right)P'(\bar{\rho})|k|^2+a\mu\bar{\rho}|k|^2,\\
		&\lambda_{1}\lambda_{2}+\lambda_{1}\lambda_{3}+\lambda_{2}\lambda_{3}=b\alpha+D\alpha|k|^2+ P'(\bar{\rho})|k|^2.
	\end{aligned}
\end{eqnarray*}
The perturbation theory (see \cite{Ho} or \cite{Kato1}) for one-parameter family of
matrix $A(|k|)$ for $|k|\rightarrow 0$ implies that
$\lambda_{j}(|k|) (j=1,2,3)$ has the following asymptotic expansions:
\begin{equation*}
\lambda_{j}(|k|)=\sum_{\ell=0}^{+\infty}\lambda_{j}^{(\ell)}|k|^{\ell},
\end{equation*}
where $\lambda_{j}^{(\ell)}$ is the coefficient of $|k|^{\ell}$ in the expansion. Notice that $\lambda_{j}^{(0)}$ are the roots of the following
equation:
\begin{equation*}
\left[\lambda_{j}^{(0)}\right]^3+(b+\alpha)\left[\lambda_{j}^{(0)}\right]^2+b\alpha\lambda_{j}^{(0)}=0.
\end{equation*}
Then we have
\begin{equation*}
\lambda_{1}^{(0)}=0,\ \ \ \ \ \lambda_{2}^{(0)}=-b<0,\ \ \ \ \ \lambda_{3}^{(0)}=-\alpha<0.
\end{equation*}
By straightforward computations along with \eqref{eigenpo}, we find that
\begin{equation}\label{roots}
\begin{aligned}
&\lambda_{1}(|k|)=-\dfrac{bP'(\bar{\rho})-a\mu\bar{\rho}}{b\alpha}|k|^{2}+O(|k|^{4}),\\
&\lambda_{2}(|k|)=-b+O(|k|),\\
&\lambda_{3}(|k|)=-\alpha+O(|k|),
\end{aligned}
\end{equation}
which imply that $\lambda_{j}(|k|)$ are distinct to each other as $|k|\rightarrow 0$.

Next we give the asymptotic expressions of ${e^{A(|k|)t}}$ as
$|k|\rightarrow 0$. We note that the solution matrix
${e^{A(|k|)t}}$ has the spectral decomposition
\begin{equation*}
{e^{A(|k|)t}}=\sum_{j=1}^{3}{\exp({\lambda_{j}(|k|)t})}P_{j}(|k|),
\end{equation*}
where $\lambda_{j}(|k|)$ are the eigenvalues of $A(|k|)$ and
$P_{j}(|k|)$ are the corresponding eigenprojections. Notice that $\lambda_{j}(|k|)$ are distinct to each other as $|k|\rightarrow 0$. Then $P_{j}(|k|)$ can be written as
\begin{equation*}
\displaystyle P_{j}(|k|)=\prod_{\ell\neq
	j}\frac{A(|k|)-\lambda_{\ell}(|k|)I}{\lambda_{j}(|k|)-\lambda_{\ell}(|k|)}, \ j=1,2,3.
\end{equation*}

We estimate $P_{1}(|k|)$ as
\begin{equation*}
\begin{aligned}
\displaystyle P_{1}(|k|)=&\frac{A(|k|)-\lambda_{2}(|k|)I}{\lambda_{1}(|k|)-\lambda_{2}(|k|)}\cdot\frac{A(|k|)-\lambda_{3}(|k|)I}{\lambda_{1}(|k|)-\lambda_{3}(|k|)}\\
=&\frac{A(|k|)^2-(\lambda_{2}(|k|)+\lambda_{3}(|k|))A(|k|)+(\lambda_{2}(|k|)\lambda_{3}(|k|)I}{\lambda_{1}(|k|)^2-(\lambda_{2}(|k|)+\lambda_{3}(|k|))\lambda_{1}(|k|) +(\lambda_{2}(|k|)\lambda_{3}(|k|)}=:\frac{(f_{ij})_{3\times 3}}{P^{\mathrm{den}}_{1}}.
\end{aligned}
\end{equation*}
One can compute
\begin{equation*}
[A(|k|)]^{2}=\left(\begin{array} {ccc}
-P'(\bar{\rho})|k|^2 & \alpha\bar{\rho} &  \mu\bar{\rho}|k|^2\\[3mm]
\ -\frac{\alpha P'(\bar{\rho})}{\bar{\rho}}|k|^2-a\mu|k|^2\  &\   -P'(\bar{\rho})|k|^2+\alpha^2\  & \   (b+\alpha) \mu |k|^2+D \mu |k|^4\ \\[3mm]
-ab-aD|k|^2 & -a\bar{\rho} & b^2+2bD|k|^2+D^2|k|^4
\end{array} \right),
\end{equation*}
and
\begin{eqnarray}\label{root23}
\begin{aligned}
&\lambda_{2}+\lambda_{3}=-b-D|k|^2-\alpha-\lambda_{1}=-(b+\alpha)-\left(D-\dfrac{bP'(\bar{\rho})-a\mu\bar{\rho}}{b\alpha}\right)|k|^{2}+O(|k|^4),\\
&\begin{aligned}
\lambda_{2}\lambda_{3}=&b\alpha+D\alpha|k|^2+ P'(\bar{\rho})|k|^2-\lambda_{1}\left(\lambda_{2}+\lambda_{3}\right)\\
=&b\alpha+\left(D\alpha+P'(\bar{\rho})-(b+\alpha)\dfrac{bP'(\bar{\rho})-a\mu\bar{\rho}}{b\alpha}\right)|k|^2+O(|k|^4).
\end{aligned}
\end{aligned}
\end{eqnarray}

Let us compute $f_{ij}$ ($1\leq i,j\leq 3)$ as follows. For $f_{11}$, one has
\begin{eqnarray*}
	\begin{aligned}
		f_{11}
		=& -P'(\bar{\rho})|k|^2+b\alpha+\left(D\alpha+P'(\bar{\rho})-(b+\alpha)\dfrac{bP'(\bar{\rho})-a\mu\bar{\rho}}{b\alpha}\right)|k|^2+O(|k|^4)\\
		=&b\alpha+\left(D\alpha-(b+\alpha)\dfrac{bP'(\bar{\rho})-a\mu\bar{\rho}}{b\alpha}\right)|k|^2+O(|k|^4).
	\end{aligned}
\end{eqnarray*}
In a similar way, we can get
\begin{eqnarray*}
	\begin{aligned}
		&\begin{aligned}
			f_{12}=&\alpha\bar{\rho}-(\lambda_{2}+\lambda_{3})(-\bar{\rho})\\
			=&\alpha\bar{\rho}-(-(b+\alpha)-\left(D-\dfrac{bP'(\bar{\rho})-a\mu\bar{\rho}}{b\alpha}\right)|k|^{2}+O(|k|^4))(-\bar{\rho})\\
			=&-b\bar{\rho}-\left(D-\dfrac{bP'(\bar{\rho})-a\mu\bar{\rho}}{b\alpha}\right)\bar{\rho}|k|^{2}+O(|k|^4),
		\end{aligned}\\
		& f_{13}=\mu\bar{\rho}|k|^2.
	\end{aligned}
\end{eqnarray*}

For $f_{21}$, one has
\begin{eqnarray*}
	\begin{aligned}
		f_{21}= &-\frac{\alpha P'(\bar{\rho})}{\bar{\rho}}|k|^2-a\mu|k|^2-\left(-(b+\alpha)-\left(D-\dfrac{bP'(\bar{\rho})-a\mu\bar{\rho}}{b\alpha}\right)|k|^{2}+O(|k|^4)\right)\frac{P'(\bar{\rho})}{\bar{\rho}}|k|^2\\
		=& \frac{bP'(\bar{\rho})-a\mu\bar{\rho}}{\bar{\rho}}|k|^2+O(|k|^4),
	\end{aligned}
\end{eqnarray*}
and
\begin{eqnarray*}
	\begin{aligned}
		f_{22}=&-P'(\bar{\rho})|k|^2+\alpha^2-\left(-(b+\alpha)-\left(D-\dfrac{bP'(\bar{\rho})-a\mu\bar{\rho}}{b\alpha}\right)|k|^{2}+O(|k|^4)\right)(-\alpha)\\
		&+b\alpha+\left(D\alpha+P'(\bar{\rho})-(b+\alpha)\dfrac{bP'(\bar{\rho})-a\mu\bar{\rho}}{b\alpha}\right)|k|^2+O(|k|^4)\\
		=&-\dfrac{bP'(\bar{\rho})-a\mu\bar{\rho}}{\alpha}|k|^2+O(|k|^4).
	\end{aligned}
\end{eqnarray*}
Similarly, one has
\begin{eqnarray*}
	\begin{aligned}
		f_{23}=& (b+\alpha) \mu |k|^2+D \mu |k|^4-\left(-(b+\alpha)-\left(D-\dfrac{bP'(\bar{\rho})-a\mu\bar{\rho}}{b\alpha}\right)|k|^{2}+O(|k|^4)\right)(-\mu |k|^2)\\
		=&\dfrac{bP'(\bar{\rho})-a\mu\bar{\rho}}{b\alpha}\mu|k|^{4}+O(|k|^6).
	\end{aligned}
\end{eqnarray*}
Moreover, it holds that
\begin{eqnarray*}
	\begin{aligned} f_{31}
		=& -ab-aD|k|^2-\left(-(b+\alpha)-\left(D-\dfrac{bP'(\bar{\rho})-a\mu\bar{\rho}}{b\alpha}\right)|k|^{2}+O(|k|^4)\right)a\\
		=&a\alpha-a \dfrac{bP'(\bar{\rho})-a\mu\bar{\rho}}{b\alpha}|k|^2+O(|k|^4),
	\end{aligned}
\end{eqnarray*}
and
\begin{eqnarray*}
	&&f_{32}=-a\bar{\rho},\\
	&&\begin{aligned}
		f_{33}=&b^2+2bD|k|^2+D^{2}|k|^4-\left(-(b+\alpha)-\left(D-\dfrac{bP'(\bar{\rho})-a\mu\bar{\rho}}{b\alpha}\right)|k|^{2}+O(|k|^4)\right)(-b-D|k|^2),\\
		&+b\alpha+\left(D\alpha+P'(\bar{\rho})-(b+\alpha)\dfrac{bP'(\bar{\rho})-a\mu\bar{\rho}}{b\alpha}\right)|k|^2+O(|k|^4)\\
		=&\frac{a\mu\bar{\rho}}{b}|k|^{2}+O(|k|^4).
	\end{aligned}
\end{eqnarray*}
Because of \eqref{root23}, we denote $P^{\mathrm{den}}_{1}=\sum\limits_{\ell=0}^{+\infty}g^{(2\ell)}|k|^{2\ell}$. Then
\begin{eqnarray*}
	\frac{1}{P^{\mathrm{den}}_{1}}=\frac{1}{b\alpha+g^{(2)}|k|^2+O(|k|^4)}=\frac{1}{b\alpha}-\frac{g^{(2)}}{[g^{(0)}]^2}|k|^2+O(|k|^4).
\end{eqnarray*}

Let $P_{j}^{1}(ik),\ P_{j}^{2}(ik),\ P_{j}^{3}(ik)$
be the three row vectors of $P_{j}(ik)$, $j=1,2,3$.
Then we have the expressions of $\hat{\rho}$,
$\hat{u}$ and $\hat{\phi}$
for $|k|\rightarrow 0$ as follows:
\begin{eqnarray*}
	\begin{aligned}
		\hat{\rho}
		=&\frac{\exp{(\lambda_{1}(ik)t)}}{P^{\mathrm{den}}_{1}} \left(f_{11}\hat{\rho}_{0}+f_{12}ik\cdot \hat{u}_{0}+f_{13}\hat{\phi}_{0}\right)+\sum_{j=2}^{3}\exp{(\lambda_{j}(ik)t)}P_{j}^{1}(ik)\hat{U}_{\parallel
			0}(k)^{T}\\
		=&\exp{(\lambda_{1}(ik)t)}\hat{\rho}_{0}+O(|k|)\exp{(\lambda_{1}(ik)t)}\left|\hat{U}_{0}(k)\right|+\sum_{j=2}^{3}\exp{(\lambda_{j}(ik)t)}P_{j}^{1}(ik)\hat{U}_{\parallel
			0}(k)^{T},
	\end{aligned}
\end{eqnarray*}
\begin{eqnarray*}
	\begin{aligned}
		\hat{u}=&-\tilde{k}\times \tilde{k}\times \hat{u}-\frac{ik}{|k|^2}ik\cdot \hat{u}\\
		=& \exp{(-\alpha t)} (-\tilde{k}\times\tilde{k}\times\hat{u}_{0})-\frac{ik}{|k|^2}
		\bigg\{\frac{\exp{(\lambda_{1}(ik)t)}}{P^{\mathrm{den}}_{1}}\left(f_{21}\hat{\rho}_{0}+f_{22}ik\cdot \hat{u}_{0}+f_{23}\hat{\phi}_{0}\right)
		\\
		&\ \ \ \ \ \ \ \ \ \ \ \ \ \ \ \ \ \ \ \ \ \ \ \ \ \ \ \ \ \ \ +\sum_{j=2}^{3}\exp{(\lambda_{j}(ik)t)}P_{j}^{2}(ik)\hat{U}_{\parallel
			0}(k)^T\bigg\}\\
		=& \exp{(-\alpha t)} (-\tilde{k}\times\tilde{k}\times\hat{u}_{0})+\exp{(\lambda_{1}(ik)t)}\left(- \frac{bP'(\bar{\rho})-a\mu\bar{\rho}}{b\alpha\bar{\rho}}ik\hat{\rho}_{0}\right)\\
		&\quad\quad\quad+O(|k|^{2})\exp{(\lambda_{1}(ik)t)}|\hat{U}_{0}(k)|+\sum_{j=2}^{3}\exp{(\lambda_{j}(ik)t)}P_{j}^{2}(ik)\hat{U}_{\parallel
			0}(k)^T,
	\end{aligned}
\end{eqnarray*}
\begin{eqnarray*}
	\begin{aligned}
		\hat{\phi}=
		&\frac{\exp{(\lambda_{1}(ik)t)}}{P^{\mathrm{den}}_{1}} \left(f_{31}\hat{\rho}_{0}+f_{32}ik\cdot \hat{u}_{0}+f_{33}\hat{\phi}_{0}\right)+\sum_{j=2}^{3}\exp{(\lambda_{j}(ik)t)}P_{j}^{3}(ik)\hat{U}_{\parallel
			0}(k)^{T}\\
		=&\exp{(\lambda_{1}(ik)t)}\left(\frac{a}{b}\hat{\rho}_{0}\right)+O(|k|)\exp{(\lambda_{1}(ik)t)}|\hat{U}_{0}(k)|+\sum_{j=2}^{3}\exp{(\lambda_{j}(ik)t)}P_{j}^{3}(ik)\hat{U}_{\parallel
			0}(k)^T.
	\end{aligned}
\end{eqnarray*}

%

\subsubsection{Error estimates}
\begin{lemma}\label{errorL}
	There is $r_{0}>0$ such that for $|k|\leq r_{0}$ and $t\geq 0$, the error term $|\hat{U}-\hat{\tilde{U}}|$ can be bounded as
	\begin{eqnarray}
	&&|\hat{\rho}(t,k)-\hat{\tilde{\rho}}(t,k)|\leq C
	|k|\exp{\left(-\lambda|k|^{2}t\right)}\left|\hat{U}_{0}(k)\right|+C\exp{\left(-\lambda t\right)}\left|\hat{U}_{0}(k)\right|,\label{error1}\\
	&&|\hat{u}(t,k)-\hat{\tilde{u}}(t,k)|\leq C
	|k|^{2}\exp{\left(-\lambda|k|^{2}t\right)}\left|\hat{U}_{
		0}(k)\right|+C\exp{\left(-\lambda
		t\right)}\left|\hat{U}_{
		0}(k)\right|,\label{error2}\\
	&&|\hat{\phi}(t,k)-\hat{\tilde{\phi}}(t,k)|\leq C
	|k|\exp{\left(-\lambda|k|^{2}t\right)}\left|\hat{U}_{0}(k)\right|+C\exp{\left(-\lambda
		t\right)}\left|\hat{U}_{0}(k)\right|,\label{error3}
	\end{eqnarray}
	where $C$ and $\lambda$ are positive constants.
\end{lemma}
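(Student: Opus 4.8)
The plan is to extract the error directly from the spectral representations of $\hat\rho$, $\hat u$, $\hat\phi$ obtained just above and compare them term by term with the explicit diffusion-wave formulas \eqref{rhobar}--\eqref{Ebar}. Write $\sigma_0:=\frac{bP'(\bar\rho)-a\mu\bar\rho}{b\alpha}>0$, so that $\hat{\tilde\rho}=e^{-\sigma_0|k|^2t}\hat\rho_0$, $\hat{\tilde u}=-\frac{\sigma_0}{\bar\rho}e^{-\sigma_0|k|^2t}ik\hat\rho_0$, $\hat{\tilde\phi}=\frac ab e^{-\sigma_0|k|^2t}\hat\rho_0$, and recall from \eqref{roots} that $\lambda_1(|k|)=-\sigma_0|k|^2+O(|k|^4)$ while $\lambda_2(|k|)=-b+O(|k|)$ and $\lambda_3(|k|)=-\alpha+O(|k|)$. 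First I would fix $r_0>0$ small enough that on $|k|\le r_0$: (i) the three eigenvalues stay distinct and analytic with the expansions of \eqref{roots} valid, and $\mathrm{Re}\,\lambda_1(|k|)\le-\lambda|k|^2$, $\mathrm{Re}\,\lambda_2(|k|)\le-\lambda$, $\mathrm{Re}\,\lambda_3(|k|)\le-\lambda$ for some $\lambda>0$; (ii) the eigenprojections $P_j(ik)$ and hence the ratios $f_{ij}/P_1^{\mathrm{den}}$ are uniformly bounded.

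\textbf{Decomposition of the error.} For each component, $\hat U-\hat{\tilde U}$ splits into three kinds of terms. The first kind is the $j=1$ leading contribution $\bigl(e^{\lambda_1(|k|)t}-e^{-\sigma_0|k|^2t}\bigr)$ multiplied by $\hat\rho_0$ (resp.\ by $-\frac{\sigma_0}{\bar\rho}ik\hat\rho_0$ for $\hat u$, by $\frac ab\hat\rho_0$ for $\hat\phi$): here the leading-order coefficients match exactly by the very construction of \eqref{sol.rhob}, so only the discrepancy between the two exponents survives. The second kind consists of the $j=1$ remainder terms, namely $O(|k|)e^{\lambda_1(|k|)t}|\hat U_0(k)|$ for $\hat\rho,\hat\phi$ and $O(|k|^2)e^{\lambda_1(|k|)t}|\hat U_0(k)|$ for $\hat u$, which are already present in the spectral representation. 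The third kind gathers the $j=2,3$ modes $\sum_{j=2}^3 e^{\lambda_j(|k|)t}P_j^{\bullet}(ik)\hat U_{\parallel 0}(k)^T$, together with the curl part $e^{-\alpha t}(-\tilde k\times\tilde k\times\hat u_0)$ appearing only in $\hat u$.

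\textbf{Estimating the three kinds.} For the first kind I would use $|e^{z_1}-e^{z_2}|\le|z_1-z_2|\,e^{\max(\mathrm{Re}\,z_1,\mathrm{Re}\,z_2)}$ with $z_1=\lambda_1(|k|)t$, $z_2=-\sigma_0|k|^2t$; since $|z_1-z_2|=O(|k|^4)t$ and $\max(\mathrm{Re}\,z_1,\mathrm{Re}\,z_2)\le-\lambda|k|^2t$, and since $s\mapsto s\,e^{-\lambda s}$ is bounded on $[0,\infty)$, one gets
\[
\bigl|e^{\lambda_1(|k|)t}-e^{-\sigma_0|k|^2t}\bigr|\le C|k|^4 t\,e^{-\lambda|k|^2t}=C|k|^2\bigl(|k|^2t\,e^{-\tfrac{\lambda}{2}|k|^2t}\bigr)e^{-\tfrac{\lambda}{2}|k|^2t}\le C|k|^2 e^{-\tfrac{\lambda}{2}|k|^2t}.
\]
Multiplying by $|\hat\rho_0|$ (resp.\ $|k||\hat\rho_0|$, $|\hat\rho_0|$) gives a bound of order $|k|^2 e^{-\lambda|k|^2t}|\hat U_0|$ (resp.\ $|k|^3$, $|k|^2$), which is absorbed into the right-hand sides of \eqref{error1}--\eqref{error3}. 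For the second kind, $\mathrm{Re}\,\lambda_1(|k|)\le-\lambda|k|^2$ directly yields $O(|k|)e^{-\lambda|k|^2t}|\hat U_0(k)|$ and $O(|k|^2)e^{-\lambda|k|^2t}|\hat U_0(k)|$, exactly the diffusive error terms claimed. For the third kind, $\mathrm{Re}\,\lambda_j(|k|)\le-\lambda$ for $j=2,3$ together with the uniform boundedness of $P_j(ik)$ and the trivial estimate $|ik\cdot\hat u_0|\le r_0|\hat u_0(k)|\le C|\hat U_0(k)|$ valid for $|k|\le r_0$ gives $Ce^{-\lambda t}|\hat U_0(k)|$; the curl part contributes $e^{-\alpha t}|\hat u_0(k)|\le Ce^{-\lambda t}|\hat U_0(k)|$ after shrinking $\lambda$ if necessary. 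Summing the three kinds yields \eqref{error1}--\eqref{error3}.

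\textbf{Main obstacle.} The delicate point is the first kind: one must verify that after inserting the expansions of $\lambda_1(|k|)$, the entries $f_{ij}/P_1^{\mathrm{den}}$, and the profiles \eqref{rhobar}--\eqref{Ebar}, the $O(1)$ coefficients (for $\hat\rho,\hat\phi$) and the $O(|k|)$ coefficients (for $\hat u$) cancel \emph{exactly}, so that the surviving remainder genuinely gains a power of $|k|$; this is precisely what the computations of $f_{11},f_{12},f_{13},f_{21},f_{22},f_{23},f_{31},f_{32},f_{33}$ and of $1/P_1^{\mathrm{den}}$ carried out above are designed to furnish, but the bookkeeping of the cancellation must be done carefully. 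Once that leading-order matching is confirmed, the rest is the elementary exponential inequalities above plus the uniform analyticity and boundedness of the spectral data on the fixed ball $|k|\le r_0$.
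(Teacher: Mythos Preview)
Your proposal is correct and follows essentially the same route as the paper: the same three-term decomposition ($\hat R_{11},\hat R_{12},\hat R_{13}$ in the paper's notation), the same exponential-difference estimate $|e^{\lambda_1 t}-e^{-\sigma_0|k|^2 t}|\le C|k|^2 e^{-\lambda|k|^2 t}$ from $\lambda_1+\sigma_0|k|^2=O(|k|^4)$, and the same handling of the $j=2,3$ modes via $\mathrm{Re}\,\lambda_j\le-\lambda$ and $P_j=O(1)$. The leading-order cancellations you flag as the main obstacle are precisely what the preceding $f_{ij}/P_1^{\mathrm{den}}$ computations establish, so nothing is missing.
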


\begin{proof} It follows from the expressions of
	$\hat{\rho}$ and $\hat{\tilde{\rho}}$ that
	\begin{eqnarray*}
		&&\begin{aligned}
			&\hat{\rho}(t,k)-\hat{\tilde{\rho}}(t,k)\\
			= &\exp{(\lambda_{1}(|k|)t)}\hat{\rho}_{0}-
			\exp{\left(-\frac{bP'(\bar{\rho})-a\mu\bar{\rho}}{b\alpha}|k|^{2}t\right)}\hat{\rho}_{0}\\
			&+O(|k|)\exp{(\lambda_{1}(|k|)t)}\left|\hat{U}_{0}(k)\right|+\sum_{j=2}^{3}\exp{(\lambda_{j}(|k|)t)}P_{j}^{1}(|k|)\hat{U}_{\parallel
				0}(k)^{T}\\
			:=&\hat{R}_{11}(|k|)+\hat{R}_{12}(|k|)+\hat{R}_{13}(|k|).
		\end{aligned}
	\end{eqnarray*}
	We have from \eqref{roots} that
	\begin{equation*}
	\lambda_{1}(|k|)+\frac{bP'(\bar{\rho})-a\mu\bar{\rho}}{b\alpha}|k|^{2}=O(|k|^{4}),
	\end{equation*}
	and
	\begin{eqnarray*}
		\begin{aligned}
			&\left|\exp{(\lambda_{1}(|k|)t)}-
			\exp{\left(-\frac{bP'(\bar{\rho})-a\mu\bar{\rho}}{b\alpha}|k|^{2}t\right)}\right|\\
			&=\exp{\left(-\frac{bP'(\bar{\rho})-a\mu\bar{\rho}}{b\alpha}|k|^{2}t\right)}
			\left|\exp{\left(\lambda_{1}(|k|)t+\frac{bP'(\bar{\rho})-a\mu\bar{\rho}}{b\alpha}|k|^{2}t\right)}-1\right|\\
			&\leq
			C\exp{\left(-\frac{bP'(\bar{\rho})-a\mu\bar{\rho}}{b\alpha}|k|^{2}t\right)}
			|k|^{4}t \exp{(C|k|^{4}t)}\\
			&\leq  C|k|^{2}\exp{\left(-\lambda|k|^{2}t\right)},
		\end{aligned}
	\end{eqnarray*}
	as $|k|\to 0$.
	Therefore, we obtain that
	\begin{eqnarray*}
		\left|\hat{R}_{11}(|k|)\right|\leq
		C|k|^{2}\exp{\left(-\lambda|k|^{2}t\right)}\left|\hat{\rho}_{0}(k)\right|\ \ \ \mathrm{as}\ \ |k|\rightarrow 0.
	\end{eqnarray*}
	Note that $\lambda_{1}(|k|)\leq -\lambda|k|^2$ and
	$|{\exp({\lambda_{1}(|k|)t})}|\leq {\exp({-\lambda|k|^2t})}$
	as $|k|\rightarrow 0$. Consequently, we find that
	\begin{eqnarray*}
		\left|\hat{R}_{12}(|k|)\right|\leq C|k|
		{\exp({-\lambda|k|^2t})}\left|\hat{U}_{0}(k)\right|\quad \mathrm{as}\ \ |k|\rightarrow 0.
	\end{eqnarray*}
	Now it suffices to estimate $\left|\hat{R}_{13}(|k|)\right|$. Recall that
	\eqref{roots} gives ${\exp({\lambda_{j}(ik)t})}\leq
	e^{-\lambda t}$  $(j=2,3)$ as $|k|\rightarrow 0$. Also notice
	$P_{j}^{1}(ik)=O(1)$ $(j=2,3)$. Thus we have
	\begin{eqnarray*}
		\left|\hat{R}_{13}(ik)\right|\leq C{e^{-\lambda t}}\left|\hat{U}_{0}(k)\right|\ \ \ \mathrm{as}\ \
		|k|\rightarrow 0.
	\end{eqnarray*}
	This yields the desired estimate $\eqref{error1}$.
	
	In a similar way, we can prove \eqref{error2} and \eqref{error3}, and complete the proof of Lemma \ref{errorL}.
\end{proof}

Next, we consider the properties of $\hat{\rho}(t,k)$,
$\hat{u}(t,k)$ and $\hat{\phi}(t,k)$
as  $|k|\rightarrow \infty$. It follows from \eqref{s4.j} that
\begin{eqnarray}\label{s4.je}
|\hat{U}(t,k)|\leq \left\{\begin{aligned}
&C {\exp({- \lambda|k|^{2}t })}(|\hat{U}_{0}(k)|+|k||\hat{\phi}_{0}|),\  \  |k|\leq r_{0},\\
&C {\exp({- \lambda t )}}(|\hat{U}_{0}(k)|+|k||\hat{\phi}_{0}(k)|),\ \ \  |k|\geq
r_{0}.
\end{aligned}
\right.
\end{eqnarray}
Here $r_{0}$ is defined in Lemma \ref{errorL}. Combining
\eqref{s4.je} with \eqref{rhobar}, \eqref{ubar} and \eqref{Ebar}, we have the following pointwise estimate for the error terms.

\begin{lemma}\label{errorL2}
	Let  $r_{0}>0$ be given in Lemma \ref{errorL}. For $|k|\geq r_0$ and $t\geq 0$, the error $|\hat{U}-\hat{\tilde{U}}|$ can be bounded as
	\begin{eqnarray*}
		&&|\hat{\rho}(t,k)-\hat{\tilde{\rho}}(t,k)|\leq C
		{\exp({- \lambda t )}}(|\hat{U}_{0}(k)|+|k||\hat{\phi}_{0}(k)|),
		\\
		&&|\hat{u}(t,k)-\hat{\tilde{u}}(t,k)|\leq  {\exp({- \lambda t )}}(|\hat{U}_{0}(k)|+|k||\hat{\phi}_{0}|)+C\exp{\left(-\lambda t\right)} |k||\hat{\rho}_{0}(k)|,
		\\
		&&|\hat{\phi}(t,k)-\hat{\tilde{\phi}}(t,k)|\leq C
		{\exp({- \lambda t )}}(|\hat{U}_{0}(k)|+|k||\hat{\phi}_{0}(k)|),
	\end{eqnarray*}
	where $C$ and $\lambda$ are positive constants.
\end{lemma}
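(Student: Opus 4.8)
The plan is to split the error by the triangle inequality, $|\hat U-\hat{\tilde U}|\le |\hat U|+|\hat{\tilde U}|$, and to check that on the high-frequency regime $|k|\ge r_0$ \emph{each} of the two terms already decays exponentially in $t$, so that the claimed bounds follow at once. For the solution $\hat U$ itself there is nothing new to do: the second line of \eqref{s4.je} already gives
$$|\hat U(t,k)|\le C\exp(-\lambda t)\bigl(|\hat U_0(k)|+|k||\hat\phi_0(k)|\bigr),\qquad |k|\ge r_0 .$$

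For the diffusion-wave profile $\hat{\tilde U}$, the key observation is that the Gaussian weight in \eqref{rhobar}--\eqref{Ebar} can be converted into genuine exponential time decay once $|k|$ is bounded away from $0$: since $bP'(\bar\rho)-a\mu\bar\rho>0$ by \eqref{pressure} and $|k|^2\ge r_0^2$ on this region,
$$\exp\left(-\frac{bP'(\bar\rho)-a\mu\bar\rho}{b\alpha}|k|^2 t\right)\le \exp(-\lambda_0 t),\qquad \lambda_0:=\frac{bP'(\bar\rho)-a\mu\bar\rho}{b\alpha}\,r_0^2>0 .$$
Substituting this into \eqref{rhobar}, \eqref{ubar} and \eqref{Ebar} yields $|\hat{\tilde\rho}(t,k)|\le Ce^{-\lambda_0 t}|\hat U_0(k)|$, $|\hat{\tilde\phi}(t,k)|=\tfrac{a}{b}|\hat{\tilde\rho}(t,k)|\le Ce^{-\lambda_0 t}|\hat U_0(k)|$, and $|\hat{\tilde u}(t,k)|\le Ce^{-\lambda_0 t}|k||\hat\rho_0(k)|$; note that the extra factor $|k|$ coming from the $ik$ in \eqref{ubar} is retained, and that the last bound stays uniform in $t\ge0$ because at $t=0$ it merely reduces to $C|k||\hat\rho_0(k)|$.

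Taking $\lambda$ to be the smaller of $\lambda_0$ and the constant appearing in \eqref{s4.je} and adding the two contributions, the triangle inequality gives exactly the asserted estimates: for $\hat\rho-\hat{\tilde\rho}$ and $\hat\phi-\hat{\tilde\phi}$ the bound $Ce^{-\lambda t}(|\hat U_0(k)|+|k||\hat\phi_0(k)|)$, and for $\hat u-\hat{\tilde u}$ the same bound plus the additional term $Ce^{-\lambda t}|k||\hat\rho_0(k)|$ contributed by $\hat{\tilde u}$. I do not anticipate a genuine obstacle in this step; the only point worth emphasising is the upgrade of the parabolic decay $\exp\left(-\frac{bP'(\bar\rho)-a\mu\bar\rho}{b\alpha}|k|^2 t\right)$ to the exponential decay $e^{-\lambda t}$ on $|k|\ge r_0$, which is precisely what keeps the high-frequency error estimate consistent with \eqref{s4.je} and ensures that no frequency weight is lost in the process.
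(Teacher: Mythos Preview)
Your proposal is correct and follows essentially the same approach as the paper: the paper simply states that the lemma follows by ``combining \eqref{s4.je} with \eqref{rhobar}, \eqref{ubar} and \eqref{Ebar}'', which amounts precisely to your triangle-inequality splitting together with the observation that the Gaussian factor $\exp\bigl(-\tfrac{bP'(\bar\rho)-a\mu\bar\rho}{b\alpha}|k|^2 t\bigr)$ yields genuine exponential decay once $|k|\ge r_0$.
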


Based on Lemmas \ref{errorL}-\ref{errorL2} and \cite[Theorem 4.2]{Duan}, the time-decay
properties for the difference terms $\rho-\tilde{\rho}$,
$u-\tilde{u}$ and
$\phi-\tilde{\phi}$ are stated as follows.

\begin{proposition}\label{thm.decaypar}
	Let $1\leq p,r,s\leq 2\leq q\leq \infty$, and let $m\geq
	0$ be an integer. Suppose that
	$[\rho,u,\phi]$ is the solution
	to the Cauchy problem \eqref{HPVL}-\eqref{NLI}. Then
	$U-\tilde{U}=[\rho-\tilde{\rho},u-\tilde{u},\phi-\tilde{\phi}]$ satisfies the following time-decay property:
	\begin{multline*}
	\|\nabla ^{m}(\rho(t)-\tilde{\rho}(t))\|_{L^q}\leq
	C(1+t)^{-\frac{3}{2}(\frac{1}{p}-\frac{1}{q})-\frac{m+1}{2}}\|U_{0}\|_{L^{p}}+Ce^{-\lambda t}\|\nabla^mU_{0}\|_{L^{s}}\\
	+Ce^{-\lambda t}
	\|\nabla^{m+[3(\frac{1}{r}-\frac{1}{q})]_+}U_{0}\|_{L^r}+Ce^{-\lambda t}\|\nabla^{m+1+[3(\frac{1}{r}-\frac{1}{q})]_+}\phi_{0}\|_{L^{r}},
	\end{multline*}
	\begin{multline*}
	\|\nabla
	^{m}(u(t)-\tilde{u}(t))\|_{L^q}\leq
	C(1+t)^{-\frac{3}{2}(\frac{1}{p}-\frac{1}{q})-\frac{m+2}{2}}\|U_{0}\|_{L^{p}}+Ce^{-\lambda t}\|\nabla^mU_{0}\|_{L^{s}}\\
	+Ce^{-\lambda t}
	\|\nabla^{m+[3(\frac{1}{r}-\frac{1}{q})]_+}U_{0}\|_{L^r}+Ce^{-\lambda t}\|\nabla^{m+1+[3(\frac{1}{r}-\frac{1}{q})]_+}[\rho_{0,}\phi_{0}]\|_{L^{r}},
	\end{multline*}
	and
	\begin{multline*}
	\|\nabla ^{m}(\phi(t)-\tilde{\phi}(t))\|_{L^q}\leq
	C(1+t)^{-\frac{3}{2}(\frac{1}{p}-\frac{1}{q})-\frac{m+1}{2}}\|U_{0}\|_{L^{p}}+Ce^{-\lambda t}\|\nabla^mU_{0}\|_{L^{s}}\\
	+Ce^{-\lambda t}
	\|\nabla^{m+[3(\frac{1}{r}-\frac{1}{q})]_+}U_{0}\|_{L^r}+Ce^{-\lambda t}\|\nabla^{m+1+[3(\frac{1}{r}-\frac{1}{q})]_+}\phi_{0}\|_{L^{r}},
	\end{multline*}
	for any $t\geq 0$, where  $C=C(m,p,r,q,\ell)$ and
	$[\ell+3(\tfrac{1}{r}-\tfrac{1}{q})]_+$ is defined in
	\eqref{thm.decay.1}.
\end{proposition}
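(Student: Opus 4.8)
The plan is to derive the three estimates directly from the pointwise frequency-space bounds on the error $\hat U-\hat{\tilde U}$ furnished by Lemmas \ref{errorL} and \ref{errorL2}, combined with the $L^p$--$L^q$ argument behind \cite[Theorem 4.2]{Duan}. I would split $\R^3_k$ into the low-frequency ball $\{|k|\le r_0\}$ and the high-frequency region $\{|k|\ge r_0\}$, estimate $\|\nabla^m(\rho-\tilde\rho)\|_{L^q}$, $\|\nabla^m(u-\tilde u)\|_{L^q}$ and $\|\nabla^m(\phi-\tilde\phi)\|_{L^q}$ on each region via Hausdorff--Young followed by Hölder's inequality in $k$, and then add the contributions. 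The feature that drives the improved rate, relative to the decay of $e^{tL}U_0$ itself, is the extra factor $|k|$ (for $\rho-\tilde\rho,\phi-\tilde\phi$) or $|k|^2$ (for $u-\tilde u$) carried by the leading low-frequency term of \eqref{error1}--\eqref{error3}, which turns $-\tfrac m2$ into $-\tfrac{m+1}{2}$ or $-\tfrac{m+2}{2}$.

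\textbf{Low frequencies.} Take $\rho-\tilde\rho$ for concreteness. Multiplying \eqref{error1} by $|k|^m$ and using $\|\nabla^m g\|_{L^q}\le C\||k|^m\hat g\|_{L^{q'}}$ with $1/q+1/q'=1$ ($q'\in[1,2]$), I reduce matters to bounding $\||k|^{m+1}e^{-\lambda|k|^2t}\hat U_0\|_{L^{q'}(\{|k|\le r_0\})}$ and $e^{-\lambda t}\||k|^m\hat U_0\|_{L^{q'}(\{|k|\le r_0\})}$. For the first I would apply Hölder with exponents $\sigma,p'$ where $1/\sigma=1/p-1/q$ and $1/p+1/p'=1$, together with $\|\hat U_0\|_{L^{p'}}\le\|U_0\|_{L^p}$, obtaining $C\||k|^{m+1}e^{-\lambda|k|^2t}\|_{L^\sigma(\{|k|\le r_0\})}\|U_0\|_{L^p}$; the change of variable $k\mapsto t^{-1/2}k$ gives $\||k|^{m+1}e^{-\lambda|k|^2t}\|_{L^\sigma(\{|k|\le r_0\})}\le C(1+t)^{-\frac{m+1}{2}-\frac32(\frac1p-\frac1q)}$, exactly the asserted rate (the restriction to the ball removes any small-$t$ issue). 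For the second term, since the ball is bounded and $q'\le s'$ whenever $s\le2\le q$, I would use $\||k|^m\hat U_0\|_{L^{q'}(\{|k|\le r_0\})}\le C\||k|^m\hat U_0\|_{L^{s'}}\le C\|\nabla^mU_0\|_{L^s}$ by Hausdorff--Young, yielding the term $Ce^{-\lambda t}\|\nabla^mU_0\|_{L^s}$. Running the same computation on \eqref{error2} (factor $|k|^{m+2}$) and \eqref{error3} (factor $|k|^{m+1}$) produces the exponents $-\tfrac{m+2}{2}$ and $-\tfrac{m+1}{2}$ for $u-\tilde u$ and $\phi-\tilde\phi$.

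\textbf{High frequencies.} Here I would invoke Lemma \ref{errorL2}: on $\{|k|\ge r_0\}$ the error is bounded by $Ce^{-\lambda t}(|\hat U_0(k)|+|k||\hat\phi_0(k)|)$ for $\rho-\tilde\rho$ and $\phi-\tilde\phi$, with an additional $Ce^{-\lambda t}|k||\hat\rho_0(k)|$ for $u-\tilde u$. Since $|k|^{-\ell}\le r_0^{-\ell}$ on this region for every $\ell\ge0$, I would insert $\ell=[3(\frac1r-\frac1q)]_+$ auxiliary derivatives as in \eqref{thm.decay.1}, so that $|k|^m|\hat U_0|\le r_0^{-\ell}|k|^{m+\ell}|\hat U_0|$, and then the Hausdorff--Young/Sobolev-type estimate of \cite[Theorem 4.2]{Duan} converts $\||k|^{m+\ell}\hat U_0\|_{L^{q'}(\{|k|\ge r_0\})}$ into $C\|\nabla^{m+\ell}U_0\|_{L^r}$, while $|k|^{m+1}|\hat\phi_0|$ and $|k|^{m+1}|\hat\rho_0|$ become $\|\nabla^{m+1+\ell}\phi_0\|_{L^r}$ and $\|\nabla^{m+1+\ell}\rho_0\|_{L^r}$; in the bound for $u-\tilde u$ these last two merge into $\|\nabla^{m+1+\ell}[\rho_0,\phi_0]\|_{L^r}$. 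Adding the low- and high-frequency contributions gives the three displayed inequalities.

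\textbf{Main obstacle.} The genuinely hard analytic work has already been done upstream — in the spectral decomposition $e^{A(|k|)t}=\sum_{j=1}^3 e^{\lambda_j(|k|)t}P_j(|k|)$, the eigenvalue expansions \eqref{roots}, and the expansion of the eigenprojections, whose cancellations are precisely what leave the extra $|k|$-powers in \eqref{error1}--\eqref{error3}; what remains is only frequency-localized interpolation. The two places that still demand care are (i) tracking the exact power of $|k|$ coming from \eqref{error1}--\eqref{error3} so as to land on $\tfrac{m+1}{2}$ and $\tfrac{m+2}{2}$, and (ii) in the high-frequency region, inserting exactly $[3(\frac1r-\frac1q)]_+$ derivatives so that the exponentially small tail can be measured in an $L^r$ norm of the data instead of an $L^2$ norm — the bookkeeping already codified in \eqref{thm.decay.1} and carried out in \cite{Duan}.
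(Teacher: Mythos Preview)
Your proposal is correct and follows essentially the same route as the paper: the paper does not spell out a proof of this proposition but simply states that it follows from Lemmas~\ref{errorL}--\ref{errorL2} together with \cite[Theorem~4.2]{Duan}, which is exactly the low/high-frequency splitting plus Hausdorff--Young/H\"older argument you describe. Your identification of the extra $|k|$ and $|k|^2$ factors in \eqref{error1}--\eqref{error3} as the source of the improved exponents $\tfrac{m+1}{2}$ and $\tfrac{m+2}{2}$ is the key point.
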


Combining the expressions of $\hat{\rho}(t,k)$,
$\hat{u}(t,k)$ and $\hat{\phi}(t,k)$ as  $|k|\rightarrow 0$  with
\eqref{s4.je}, we
have the following $L^{p}-L^{q}$ time-decay estimates of $[\rho,u,\phi]$.

\begin{proposition}\label{thm.decayrup}
	Let $1\leq p,r,s\leq 2\leq q\leq \infty$, and let $m\geq
	0$ be an integer. Suppose that
	$[\rho,u,\phi]$ is the solution
	to the Cauchy problem \eqref{HPVL}-\eqref{NLI}. Then
	$U=[\rho,u,\phi]$ satisfies the following time-decay property:
	\begin{multline*}
	\|\nabla ^{m}\rho(t)\|_{L^q}\leq C(1+t)^{-\frac{3}{2}(\frac{1}{p}-\frac{1}{q})-\frac{m}{2}}\|\rho_{0}\|_{L^{p}}+
	C(1+t)^{-\frac{3}{2}(\frac{1}{p}-\frac{1}{q})-\frac{m+1}{2}}\|U_{0}\|_{L^{p}}+Ce^{-\lambda t}\|\nabla^mU_{0}\|_{L^{s}}\\
	+Ce^{-\lambda t}
	\|\nabla^{m+[3(\frac{1}{r}-\frac{1}{q})]_+}U_{0}\|_{L^r}+Ce^{-\lambda t}\|\nabla^{m+1+[3(\frac{1}{r}-\frac{1}{q})]_+}\phi_{0}\|_{L^{r}},
	\end{multline*}
	\begin{multline*}
	\|\nabla
	^{m}u(t)\|_{L^q}\leq C(1+t)^{-\frac{3}{2}(\frac{1}{p}-\frac{1}{q})-\frac{m+1}{2}}\|\rho_{0}\|_{L^{p}}
	+C(1+t)^{-\frac{3}{2}(\frac{1}{p}-\frac{1}{q})-\frac{m+2}{2}}\|U_{0}\|_{L^{p}}+Ce^{-\lambda t}\|\nabla^mU_{0}\|_{L^{s}}\\
	+Ce^{-\lambda t}
	\|\nabla^{m+[3(\frac{1}{r}-\frac{1}{q})]_+}U_{0}\|_{L^r}+Ce^{-\lambda t}\|\nabla^{m+1+[3(\frac{1}{r}-\frac{1}{q})]_+}[\rho_{0},\phi_{0}]\|_{L^{r}},
	\end{multline*}
	and
	\begin{multline*}
	\|\nabla ^{m}\phi(t)\|_{L^q}\leq C(1+t)^{-\frac{3}{2}(\frac{1}{p}-\frac{1}{q})-\frac{m}{2}}\|\rho_{0}\|_{L^{p}}+
	C(1+t)^{-\frac{3}{2}(\frac{1}{p}-\frac{1}{q})-\frac{m+1}{2}}\|U_{0}\|_{L^{p}}+Ce^{-\lambda t}\|\nabla^mU_{0}\|_{L^{s}}\\
	+Ce^{-\lambda t}
	\|\nabla^{m+[3(\frac{1}{r}-\frac{1}{q})]_+}U_{0}\|_{L^r}+Ce^{-\lambda t}\|\nabla^{m+1+[3(\frac{1}{r}-\frac{1}{q})]_+}\phi_{0}\|_{L^{r}},
	\end{multline*}
	for any $t\geq 0$, where  $C=C(m,p,r,q,\ell)$ and
	$[\ell+3(\tfrac{1}{r}-\tfrac{1}{q})]_+$ is defined in
	\eqref{thm.decay.1}.
\end{proposition}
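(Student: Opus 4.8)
\textbf{Proof strategy for Proposition \ref{thm.decayrup}.} The plan is to split the solution of \eqref{HPVL}--\eqref{NLI} into a low-frequency and a high-frequency piece and estimate them by entirely different means: the explicit spectral expansions of the previous subsection on $\{|k|\le r_0\}$, and the uniform exponential bound \eqref{s4.je} on $\{|k|\ge r_0\}$, where $r_0$ is the radius from Lemma \ref{errorL}. Concretely, fix a smooth cutoff and write $U=U_L+U_H$ with $\hat U_L=\chi_{\{|k|\le r_0\}}\hat U$ and $\hat U_H=\chi_{\{|k|\ge r_0\}}\hat U$; then $\|\nabla^m U(t)\|_{L^q}\le\|\nabla^m U_L(t)\|_{L^q}+\|\nabla^m U_H(t)\|_{L^q}$, and it suffices to bound each summand separately.

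For $U_L$ I would insert the asymptotic representations derived above, e.g.\ $\hat\rho=e^{\lambda_1(ik)t}\hat\rho_0+O(|k|)e^{\lambda_1(ik)t}|\hat U_0(k)|+\sum_{j=2,3}e^{\lambda_j(ik)t}P_j^1(ik)\hat U_{\parallel0}(k)^T$, recalling from \eqref{roots} that $\mathrm{Re}\,\lambda_1(|k|)\le-\lambda|k|^2$ and $\mathrm{Re}\,\lambda_j(|k|)\le-\lambda$ for $j=2,3$ when $|k|\le r_0$. The principal term is handled by the classical $L^p$--$L^q$ estimate for a smoothly truncated heat multiplier, $\|\mathcal F^{-1}[\chi_{\{|k|\le r_0\}}|k|^m e^{-c|k|^2 t}\hat f]\|_{L^q}\le C(1+t)^{-\frac32(\frac1p-\frac1q)-\frac m2}\|f\|_{L^p}$ for $1\le p\le2\le q\le\infty$, applied with $f=\rho_0$; this produces the $(1+t)^{-\frac32(\frac1p-\frac1q)-\frac m2}\|\rho_0\|_{L^p}$ term. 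The $O(|k|)$ remainder carries one extra factor $|k|$, hence one extra half-power of $t$, yielding $(1+t)^{-\frac32(\frac1p-\frac1q)-\frac{m+1}2}\|U_0\|_{L^p}$; and the $j=2,3$ pieces decay like $e^{-\lambda t}$ and are supported on the bounded ball $\{|k|\le r_0\}$, so Hausdorff--Young together with the embedding $L^{s'}\hookrightarrow L^{q'}$ on that ball (valid since $1\le s\le2\le q$ forces $s'\ge q'$) bounds them by $Ce^{-\lambda t}\|U_0\|_{L^s}$, absorbed into the $Ce^{-\lambda t}\|\nabla^m U_0\|_{L^s}$ term of the statement. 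The same computation with the expansions of $\hat u$ (leading part $-\frac{bP'(\bar\rho)-a\mu\bar\rho}{b\alpha\bar\rho}ik\hat\rho_0$, an $O(|k|^2)$ remainder, plus the curl contribution $e^{-\alpha t}(-\tilde k\times\tilde k\times\hat u_0)$) and $\hat\phi$ (leading part $\frac ab\hat\rho_0$, an $O(|k|)$ remainder) gives the low-frequency parts of the $u$- and $\phi$-bounds, the extra half-power of $t$ for $u$ coming from the $ik$ factor.

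For $U_H$ I would invoke \eqref{s4.je}, namely $|\hat U(t,k)|\le Ce^{-\lambda t}(|\hat U_0(k)|+|k||\hat\phi_0(k)|)$ for $|k|\ge r_0$, and then argue exactly as in \cite[Theorem 4.2]{Duan}: for $r=q=2$ one uses Plancherel directly, whereas for the remaining range one applies Hausdorff--Young followed by a H\"older split against $|k|^{-\theta}\chi_{\{|k|\ge r_0\}}$, which lies in $L^\nu$ only when $\theta\nu>3$; this is precisely what costs the $[3(\frac1r-\frac1q)]_+$ additional derivatives of \eqref{thm.decay.1}, and it produces the terms $Ce^{-\lambda t}\|\nabla^{m+[3(\frac1r-\frac1q)]_+}U_0\|_{L^r}$ and $Ce^{-\lambda t}\|\nabla^{m+1+[3(\frac1r-\frac1q)]_+}\phi_0\|_{L^r}$. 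For the $u$-equation the source term $|k||\hat\phi_0|$ together with the curl part $e^{-\alpha t}(-\tilde k\times\tilde k\times\hat u_0)$ accounts for the $[\rho_0,\phi_0]$ appearing in the corresponding line. Adding the low- and high-frequency contributions yields the three stated inequalities.

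I expect the only delicate point to be this last bookkeeping for $2<q\le\infty$ in the high-frequency regime: determining exactly how many extra derivatives the H\"older step costs as a function of $r$ and $q$ and verifying that this matches the definition \eqref{thm.decay.1}. Since this is a purely analytic fact about the Fourier transform that is already recorded in \cite[Theorem 4.2]{Duan}, I would simply quote it; everything else reduces to combining the spectral expansions with the standard heat-kernel $L^p$--$L^q$ estimate and the elementary exponential bound \eqref{s4.je}.
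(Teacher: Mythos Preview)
Your approach is correct and coincides with the paper's own argument: the paper does not give a detailed proof of this proposition but simply states that it follows by combining the low-frequency spectral representations of $\hat\rho$, $\hat u$, $\hat\phi$ derived in the preceding subsection with the high-frequency pointwise bound \eqref{s4.je}, invoking \cite[Theorem~4.2]{Duan} for the $L^p$--$L^q$ machinery. Your write-up is in fact more detailed than what the paper provides; the only minor imprecision is that for the low-frequency $j=2,3$ pieces you should keep the factor $|k|^m$ and bound directly by $Ce^{-\lambda t}\|\nabla^m U_0\|_{L^s}$ rather than by $Ce^{-\lambda t}\|U_0\|_{L^s}$, and the $\nabla\rho_0$ contribution in the $u$-line at high frequency actually originates from the profile term $\hat{\tilde u}\sim ik\hat\rho_0$ (cf.\ Lemma~\ref{errorL2}) rather than from the curl part.
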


\section{Asymptotic behaviour of the nonlinear system}\label{sec5}
We are devoted to proving Theorem \ref{thm.GE} in this section.
\subsection{Global existence}
Before exploiting the large-time  profile of solutions, we first establish the global existence of solutions to \eqref{HPV1} with \eqref{1.2}. To this end, we set $\rho_2=\rho-\bar{\rho}$, $\phi_2=\phi-\bar{\phi}$ and reformulate the problem \eqref{HPV1} with \eqref{1.2} around the constant equilibrium $[\bar{\rho},0,\bar{\phi}]$ with $\bar{\phi}=\frac{a}{b}\bar{\rho}$ as
\begin{equation}\label{HPV3}
\left\{\begin{aligned}
& \pa_{t}\rho_2+\bar{\rho}\nabla\cdot u=g_{1},\\
&\pa_t u+\frac{P'(\bar{\rho})}{\bar{\rho}}\nabla \rho_2+\alpha u -\mu\nabla\phi_2=g_{2},\\
&\pa_t \phi_2-D\Delta \phi_2-a\rho_2+b\phi_2=0,
\end{aligned}\right.
\end{equation}
where the nonlinear terms $g_{1}$ and $g_{2}$  are defined as follows
\begin{equation}\label{sec5.ggg}
\arraycolsep=1.5pt \left\{
\begin{aligned}
&g_{1}=-\nabla\cdot f_1, \ f_{1}=\rho_2 u\\
&g_{2}=- u \cdot \nabla u -\left(\frac{P'(\rho_2+\bar{\rho})}{\rho_2+\bar{\rho}}-\frac{P'(\bar{\rho})}{\bar{\rho}}\right)\nabla
\rho_2.
\end{aligned}\right.
\end{equation}
The initial data are given by
\begin{eqnarray}\label{NI}
[\rho_2,u,\phi_2]|_{t=0}=[\rho_{2,0},u_{0},\phi_{2,0}]=[\rho_0-\bar{\rho},u_{0},\phi_{0}-\bar{\phi}].
\end{eqnarray}

Next we shall focus on the reformulated problem \eqref{HPV3}-\eqref{NI} and first explore the global existence of solutions. Without confusion, in the rest of this section, we still use $[\rho,u,\phi]$ to denote $[\rho_2,u,\phi_2]$, correspondingly, $ [\rho_{0}, u_{0}, \phi_{0}]$ to denote $[\rho_{2,0}, u_{0}, \phi_{2,0}]$ for simplicity unless otherwise stated.  The main result of this section about the global existence of solutions to the reformulated Cauchy problem
\eqref{HPV3}-\eqref{NI} with small smooth initial data are stated as follows.


\begin{theorem}\label{pro.2.1}
	Given $\bar{\rho}>0$ and $N\ge 3$,
	let $\bar{\phi}=\frac{a}{b}\bar{\rho}$ and  $bP'(\bar{\rho})-a\mu\bar{\rho}>0$.  If $\|[\rho_0,u_{0},\phi_{0}]\|^2_N+\|\nabla \phi_0]\|^2_N $ is small enough, then the Cauchy problem $\eqref{HPV3}$-$\eqref{NI}$ admits a unique global
	solution $U=[\rho,u, \phi]$ satisfying
	\begin{eqnarray*}
		U \in C([0,\infty);H^{N}(\mathbb{R}^{3})),\ \ \ \nabla\phi \in C([0,\infty);H^{N}(\mathbb{R}^{3})),
	\end{eqnarray*}
	and
	\begin{equation*}
	\begin{split}
		\|[\rho,u,\phi] \|_{N}^{2}+\|\nabla\phi\|_{N}^2+\int_{0}^{t}\left(\|u\|_N^2+\|\nabla\rho\|_{N-1}^{2}
		+\|\nabla \phi\|_{N}^{2}\right)ds
		 \leq& C\left(\|[\rho_0,u_{0},\phi_{0}]\|^2_N+\|\nabla \phi_0]\|^2_N\right),
		 \end{split}
	\end{equation*}
for any $t\geq 0$.
\end{theorem}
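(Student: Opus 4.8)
The plan is to establish Theorem \ref{pro.2.1} by the standard continuation argument: combine a local existence result with uniform-in-time \emph{a priori} estimates. For the local existence one can treat \eqref{HPV3} as a symmetrizable hyperbolic system (the first two equations) coupled to a parabolic equation (the third), so a fixed-point or iteration scheme in $C([0,T];H^N)$ gives a unique local solution on a short time interval whose length depends only on $\|[\rho_0,u_0,\phi_0]\|_N+\|\nabla\phi_0\|_N$. The heart of the proof is the \emph{a priori} estimate: assuming a solution on $[0,T]$ with $\sup_{[0,T]}(\|[\rho,u,\phi]\|_N^2+\|\nabla\phi\|_N^2)\le\delta^2$ for a suitably small $\delta$, one shows the same quantity is controlled by the initial data plus $\delta$ times the dissipation, so that choosing $\delta$ small closes the estimate and allows the local solution to be continued to $[0,\infty)$.

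For the energy estimates I would work at the level of each derivative order $|l|\le N$. First, applying $\partial^l$ to \eqref{HPV3} and taking the inner product with the symmetrized combination $\frac{P'(\bar\rho)}{\bar\rho}\partial^l\rho$, $\bar\rho\partial^l u$, together with the manipulation of the $\phi$-equation exactly mirroring the computation in the proof of Theorem \ref{lyapunov} (multiply the $\phi$-equation by $\frac{\mu}{a}\partial^l\phi_t$, use the positive-definiteness of the matrix \eqref{matrix} guaranteed by $bP'(\bar\rho)-a\mu\bar\rho>0$), yields an energy identity producing the basic dissipation $\|u\|_N^2+\|\nabla\phi\|_N^2+\|\phi_t\|_{N-1}^2$ modulo nonlinear remainder terms. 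Second, to recover dissipation of $\nabla\rho$ I would use the now-standard ``interaction'' functional: take $\partial^l$ of the momentum equation, test with $\partial^l\nabla\rho$, and replace $\partial_t\partial^l\rho$ via the continuity equation, obtaining $\|\nabla\rho\|_{N-1}^2$ on the left at the cost of $\frac{d}{dt}\langle\partial^l u,\partial^l\nabla\rho\rangle$ and lower-order terms controlled by the $u$- and $\nabla\phi$-dissipation. A small multiple $\kappa$ of this interaction functional is then added to the basic energy, exactly as the parameter $\kappa$ is used in Section \ref{sec2}, to form a Lyapunov functional $\mathcal{E}_N(t)\sim\|[\rho,u,\phi]\|_N^2+\|\nabla\phi\|_N^2$ satisfying $\frac{d}{dt}\mathcal{E}_N+\lambda(\|u\|_N^2+\|\nabla\rho\|_{N-1}^2+\|\nabla\phi\|_N^2)\le (\text{nonlinear terms})$.

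The main obstacle is the control of the nonlinear source terms $g_1=-\nabla\cdot(\rho u)$ and $g_2=-u\cdot\nabla u-\big(\frac{P'(\rho+\bar\rho)}{\rho+\bar\rho}-\frac{P'(\bar\rho)}{\bar\rho}\big)\nabla\rho$ after differentiation. These are handled by Moser-type (Gagliardo–Nirenberg) commutator estimates: every term in $\partial^l g_i$ is a product of a low-order factor bounded in $L^\infty$ (via $H^2\hookrightarrow L^\infty$ in $\R^3$, using $N\ge 3$) and a high-order factor bounded in $L^2$, so each contributes $C(\|[\rho,u,\phi]\|_N)\,\|[u,\nabla\rho,\nabla\phi]\|_N^2$, i.e. at most $C\sqrt{\mathcal{E}_N}$ times the dissipation. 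The coefficient $\frac{P'(\rho+\bar\rho)}{\rho+\bar\rho}-\frac{P'(\bar\rho)}{\bar\rho}$ is smooth in $\rho$ and vanishes at $\rho=0$, so it carries an extra factor of $\|\rho\|_\infty\lesssim\sqrt{\mathcal{E}_N}$; here one also uses that smallness of $\mathcal{E}_N$ keeps $\rho+\bar\rho$ bounded away from zero, avoiding vacuum. A subtle point worth care: because $\phi$ itself (not only $\nabla\phi$) appears, one must separately propagate $\|\nabla\phi\|_N$ — but the $\phi$-equation is parabolic with the good damping $-b\phi$, so testing $\partial^l\nabla$ of the $\phi$-equation with $\partial^l\nabla\phi$ closes this with no nonlinearity at all.

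\begin{proof}[Proof sketch]
\textbf{Step 1 (Local existence).} Viewing the first two equations of \eqref{HPV3} as a symmetric hyperbolic system for $[\rho,u]$ with the source depending on $\nabla\phi$, and the third as a parabolic equation for $\phi$ driven by $\rho$, a standard iteration argument produces a unique solution $U\in C([0,T_0];H^N)$, $\nabla\phi\in C([0,T_0];H^N)\cap L^2(0,T_0;H^{N+1})$, with $T_0$ depending only on the size of the initial data. Since $N\ge 3$, $H^{N-1}\hookrightarrow L^\infty$ and $\rho+\bar\rho$ stays positive for $T_0$ small, so all nonlinearities are well defined.

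\textbf{Step 2 (A priori estimate).} Let $T>0$ and suppose $U$ solves \eqref{HPV3}--\eqref{NI} on $[0,T]$ with
\[
\mathcal{M}(T):=\sup_{0\le t\le T}\big(\|[\rho,u,\phi](t)\|_N^2+\|\nabla\phi(t)\|_N^2\big)^{1/2}\le\delta,
\]
$\delta$ to be fixed small. For each multi-index $l$ with $|l|\le N$, apply $\partial^l$ to \eqref{HPV3}; testing the continuity and momentum equations against $\frac{P'(\bar\rho)}{\bar\rho}\partial^l\rho$ and $\bar\rho\partial^l u$ respectively, and performing on $\partial^l$ of the $\phi$-equation the same manipulation as in the proof of Theorem \ref{lyapunov} (pair with $\frac{\mu}{a}\partial^l\phi_t$ and with $\frac{\mu}{a}\,\nabla\partial^l\phi$, then invoke positive-definiteness of \eqref{matrix}), one obtains, after summation over $|l|\le N$,
\[
\frac{d}{dt}\,\mathcal{E}_0(t)+\lambda\big(\|u\|_N^2+\|\phi_t\|_{N-1}^2+\|\nabla\phi\|_N^2\big)\le C\,\mathcal{M}(T)\,\big(\|u\|_N^2+\|\nabla\rho\|_{N-1}^2+\|\nabla\phi\|_N^2\big),
\]
where $\mathcal{E}_0(t)\sim\|[\rho,u,\phi]\|_N^2+\|\nabla\phi\|_N^2$ and the right side collects $\partial^l g_1,\partial^l g_2$ estimated by Moser-type inequalities together with $\|\rho\|_\infty+\|u\|_\infty\le C\mathcal{M}(T)$.

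\textbf{Step 3 (Dissipation of $\nabla\rho$).} For $|l|\le N-1$, pair $\partial^l$ of the momentum equation with $\partial^l\nabla\rho$ and substitute $\partial_t\partial^l\rho=-\bar\rho\,\partial^l\nabla\cdot u+\partial^l g_1$. This gives
\[
\frac{d}{dt}\sum_{|l|\le N-1}\langle\partial^l u,\partial^l\nabla\rho\rangle+\lambda\|\nabla\rho\|_{N-1}^2\le C\big(\|u\|_N^2+\|\nabla\phi\|_{N-1}^2\big)+C\,\mathcal{M}(T)\,\mathcal{D}(t),
\]
with $\mathcal{D}(t):=\|u\|_N^2+\|\nabla\rho\|_{N-1}^2+\|\nabla\phi\|_N^2$.

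\textbf{Step 4 (Closing).} Set $\mathcal{E}(t)=\mathcal{E}_0(t)+\kappa\sum_{|l|\le N-1}\langle\partial^l u,\partial^l\nabla\rho\rangle$ for small $\kappa>0$; then $\mathcal{E}(t)\sim\|[\rho,u,\phi]\|_N^2+\|\nabla\phi\|_N^2$ and, combining Steps 2--3,
\[
\frac{d}{dt}\,\mathcal{E}(t)+\lambda\,\mathcal{D}(t)\le C(\kappa+\mathcal{M}(T))\,\mathcal{D}(t).
\]
Choosing $\kappa$ then $\delta$ small absorbs the right side, and integrating in time yields
\[
\mathcal{E}(t)+\lambda\int_0^t\mathcal{D}(s)\,ds\le C\,\mathcal{E}(0)\le C\big(\|[\rho_0,u_0,\phi_0]\|_N^2+\|\nabla\phi_0\|_N^2\big).
\]
In particular $\mathcal{M}(T)^2\le C\mathcal{E}(0)$, so if the initial data are small enough, $\mathcal{M}(T)\le\delta/2<\delta$, improving the bootstrap hypothesis. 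By the continuation criterion the local solution of Step 1 extends to $[0,\infty)$ and satisfies the stated estimate.
\end{proof}
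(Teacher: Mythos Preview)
Your overall architecture---local existence plus a Lyapunov functional built from a basic energy and a $\kappa$-weighted interaction term $\langle\partial^l u,\partial^l\nabla\rho\rangle$, closed by a bootstrap---is exactly the paper's strategy, and your handling of the $\phi$-equation (pairing with $\tfrac{\mu}{a}\partial^l\phi_t$ and invoking the positive-definite matrix \eqref{matrix}) matches Step~1 of Lemma~\ref{estimate}. The gap is in Step~2 of your sketch: you test the differentiated continuity and momentum equations against the \emph{constant} weights $\tfrac{P'(\bar\rho)}{\bar\rho}\partial^l\rho$ and $\bar\rho\,\partial^l u$, and then assert that $\partial^l g_1,\partial^l g_2$ are handled by Moser-type product estimates. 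At the top order $|l|=N$ this fails: $\partial^l g_1=-\partial^l\nabla\!\cdot(\rho u)$ and the pressure part of $\partial^l g_2$ each contain a term with $N{+}1$ derivatives (namely $\rho\,\partial^l\nabla\!\cdot u$ and $\big(\tfrac{P'(\rho+\bar\rho)}{\rho+\bar\rho}-\tfrac{P'(\bar\rho)}{\bar\rho}\big)\partial^l\nabla\rho$). After one integration by parts these two bad terms combine to
\[
\Big\langle\Big(\tfrac{P'(\bar\rho)(\rho+\bar\rho)}{\bar\rho}-\tfrac{\bar\rho\,P'(\rho+\bar\rho)}{\rho+\bar\rho}\Big)\partial^l\nabla\rho,\ \partial^l u\Big\rangle,
\]
whose coefficient is $O(\rho)$ but does not vanish, so the term is only bounded by $\|\rho\|_{L^\infty}\|\nabla\rho\|_{N}\|u\|_{N}$ (or, after another IBP, by $\|\rho\|_{L^\infty}\|\rho\|_{N}\|\nabla u\|_{N}$), neither of which is controlled by your energy $\|U\|_N^2+\|\nabla\phi\|_N^2$ or dissipation $\|u\|_N^2+\|\nabla\rho\|_{N-1}^2+\|\nabla\phi\|_N^2$.

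The paper avoids this loss by using the nonlinear Friedrichs symmetrizer: it works from the reformulation \eqref{HPV3R} and tests $\partial^l$ of the continuity and momentum equations against $\tfrac{P'(\rho+\bar\rho)}{\rho+\bar\rho}\partial^l\rho$ and $(\rho+\bar\rho)\partial^l u$ (see \eqref{4.17}--\eqref{4.4}). With these variable weights the principal terms $\langle P'(\rho+\bar\rho)\partial^l\nabla\!\cdot u,\partial^l\rho\rangle$ and $\langle P'(\rho+\bar\rho)\partial^l\nabla\rho,\partial^l u\rangle$ cancel exactly after integration by parts (leaving only the harmless $\langle P''(\rho+\bar\rho)\nabla\rho\,\partial^l\rho,\partial^l u\rangle$ in $I_1^l$), so all remaining commutators are genuinely of order $\le N$ and the Moser-type bounds \eqref{sobin} apply. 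The same nonlinear weight on $u$ is what makes the cross term $\mu\langle\partial^l\nabla\phi,(\rho+\bar\rho)\partial^l u\rangle$ convert cleanly via \eqref{HPV3R}$_1$ into $\mu\langle\partial^l\phi,\partial^l\rho_t\rangle$ (see \eqref{phi12}). Your Step~3 is essentially correct since there $|l|\le N-1$ and $\|\partial^l g_i\|$ is bounded by $H^N$; only the basic energy step needs the nonlinear symmetrizer.
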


To prove Theorem \ref{pro.2.1}, it suffices to derive the {\it a priori } estimates in the following Lemma \ref{estimate}. Before stating the {\it a priori } estimate,  we define the full instant
energy functional $\mathcal {E}_{N}(U(t))$ and corresponding dissipation rate $\mathcal {D}_{N}(U(t))$
 for $N \geq 3$
by
\begin{eqnarray}\label{de.E}
\begin{aligned}
\mathcal {E}_{N}(U(t))=&\sum_{|l|\leq N}\left\{\left\langle\frac{P'(\rho+\bar{\rho})}{\rho+\bar{\rho}},
|\partial^{l}\rho|^2 \right\rangle-2\mu\langle \partial^{l}\phi,\partial^{l}\rho \rangle +\frac{b\mu}{a}\|\partial^{l}\phi\|^2+\langle(\rho+\bar{\rho}),
|\partial^{l}u|^2 \rangle\right\}\\
&+\frac{\mu D}{a}\|\nabla \phi\|_{N}^2+\kappa\sum_{|l|\leq N-1}\left\{\langle
\partial^{l}u,
\partial^{l}\nabla\rho\rangle+\frac{\mu}{2a}\|\partial^{l}\nabla \phi\|^2\right\}
\end{aligned}
\end{eqnarray}
and
\begin{equation}\label{de.D}
\mathcal {D}_{N}(U(t))=\displaystyle \|u\|_N^2+\|\nabla\rho\|_{N-1}^{2}
+\|\nabla \phi\|_{N}^{2},
\end{equation}
where $0<\kappa \ll 1$ is a constant.

For later use and clarity, we present the following Sobolev inequality for the $L^p$ estimate on products of derivatives of two functions (cf. \cite{DRZ}).

\begin{lemma}
 Let  $\theta=\left(\theta_{1}, \cdots, \theta_{n}\right)$ and  $\eta=\left(\eta_{1}, \cdots, \eta_{n}\right)$  be two multi-indices with
$\left|\theta\right|=k_{1},\left|\eta\right|=k_{2}$  and set $k=k_{1}+k_{2}$. Then, for $1 \leq p, q, r \leq \infty$ with  $1 / p=1 / q+1 / r$, we have
	\begin{eqnarray}\label{sobin}
	\big\|\partial^{\theta} u_{1} \partial^{\eta} u_{2}\big\|_{L^{p}(\mathbb{R}^{n})} \leq C\Big(\left\|u_{1}\right\|_{L^{q}\left(\mathbb{R}^{n}\right)}\big\|\nabla^{k} u_{2}\big\|_{L^{r}\left(\mathbb{R}^{n}\right)}+\left\|u_{2}\right\|_{L^{q}\left(\mathbb{R}^{n}\right)}\big\|\nabla^{k} u_{1}\big\|_{L^{r}\left(\mathbb{R}^{n}\right)}\Big),
	\end{eqnarray}
where $C$ is a positive constant.
\end{lemma}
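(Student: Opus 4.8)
The plan is to deduce \eqref{sobin} from the classical Gagliardo--Nirenberg interpolation inequality on $\mathbb{R}^n$ together with Hölder's and Young's inequalities; by density it suffices to treat smooth $u_1,u_2$, and if $k_1=0$ or $k_2=0$ the claim is just Hölder's inequality (with exponents $q$ and $r$, using $1/p=1/q+1/r$), so I may assume $k_1,k_2\ge 1$, hence $k=k_1+k_2\ge 2$.

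First I would introduce intermediate exponents $p_1,p_2\in[1,\infty]$ by $\tfrac1{p_1}=\tfrac{k_1}{k}\cdot\tfrac1r+\tfrac{k_2}{k}\cdot\tfrac1q$ and $\tfrac1{p_2}=\tfrac{k_2}{k}\cdot\tfrac1r+\tfrac{k_1}{k}\cdot\tfrac1q$, chosen so that $\tfrac1{p_1}+\tfrac1{p_2}=\tfrac1r+\tfrac1q=\tfrac1p$; Hölder's inequality then gives $\|\partial^{\theta}u_1\,\partial^{\eta}u_2\|_{L^p}\le\|\partial^{\theta}u_1\|_{L^{p_1}}\|\partial^{\eta}u_2\|_{L^{p_2}}$. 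Next I would apply the Gagliardo--Nirenberg inequality to each factor separately: for the first factor take differentiation order $j=k_1$, top order $m=k$ and interpolation weight $a=k_1/k$ (the left endpoint $j/m$ of the admissible range), for which the scaling identity $\tfrac1{p_1}=\tfrac{k_1}{n}+a\big(\tfrac1r-\tfrac kn\big)+(1-a)\tfrac1q$ holds exactly by the definition of $p_1$, so that $\|\partial^{\theta}u_1\|_{L^{p_1}}\le\|\nabla^{k_1}u_1\|_{L^{p_1}}\le C\|\nabla^{k}u_1\|_{L^r}^{k_1/k}\|u_1\|_{L^q}^{k_2/k}$; the symmetric choice gives $\|\partial^{\eta}u_2\|_{L^{p_2}}\le C\|\nabla^{k}u_2\|_{L^r}^{k_2/k}\|u_2\|_{L^q}^{k_1/k}$.

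Multiplying the two bounds yields $\|\partial^{\theta}u_1\,\partial^{\eta}u_2\|_{L^p}\le C\big(\|u_1\|_{L^q}\|\nabla^{k}u_2\|_{L^r}\big)^{k_2/k}\big(\|u_2\|_{L^q}\|\nabla^{k}u_1\|_{L^r}\big)^{k_1/k}$, and since $\tfrac{k_1}{k}+\tfrac{k_2}{k}=1$, Young's inequality $x^{k_2/k}y^{k_1/k}\le x+y$ with $x=\|u_1\|_{L^q}\|\nabla^{k}u_2\|_{L^r}$ and $y=\|u_2\|_{L^q}\|\nabla^{k}u_1\|_{L^r}$ produces precisely the right-hand side of \eqref{sobin}. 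I do not expect a genuine obstacle: the only point deserving care is checking that the intermediate exponents $p_1,p_2$ meet the scaling relation required by Gagliardo--Nirenberg and that the endpoint weight $a=k_j/k$ is always permitted (the exceptional case of Nirenberg's theorem concerns $a=1$, not the left endpoint, and in any case is avoided once $k_1,k_2\ge1$); everything else is elementary bookkeeping with Hölder exponents. If one prefers to avoid invoking the full Gagliardo--Nirenberg statement, the same argument can be run with an induction on $k$ using only the first-order interpolation inequality $\|\nabla f\|_{L^{2s}}\lesssim\|f\|_{L^\infty}^{1/2}\|\nabla^2 f\|_{L^{2s}}^{1/2}$-type bounds, which is the route used in \cite{DRZ}.
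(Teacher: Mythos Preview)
Your argument is correct and is in fact the standard proof of this product estimate: Hölder with the convex-combination exponents $p_1,p_2$, Gagliardo--Nirenberg at the left endpoint $a=j/m$ for each factor, then Young's inequality to split the product. The paper itself does not give a proof of this lemma at all---it simply quotes the inequality and refers the reader to \cite{DRZ}---so your write-up supplies strictly more than the paper does, and your closing remark that the same conclusion can be reached by the inductive route of \cite{DRZ} aligns with that reference.
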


Now, we have the following key {\it a priori } estimates for the solutions to the Cauchy problem \eqref{HPV3}-\eqref{NI}.

\begin{lemma}[{\it A priori } estimates]\label{estimate}
	Assume that the conditions on $\bar{\rho},\bar{\phi}$ in Theorem \ref{pro.2.1} hold. \ Let $U=[\rho,u,\phi]\in
	C([0,T);H^{N}(\mathbb{R}^{3}))$ be a solution of the system (\ref{HPV3}) with  $\|[\rho,u,\phi] \|_{N}^{2}+\|\nabla\phi\|_{N}^2 \ll 1$ for any $0\leq t<T$.
	Then, there exists an energy functional $\mathcal {E}_{N}(\cdot) $  in the form $\eqref{de.E}$  with an dissipation rate $\mathcal {D}_{N}(\cdot) $  in the form $\eqref{de.D} $ such
	that
	\begin{eqnarray}\label{3.2}
	&& \frac{d}{dt}\mathcal {E}_{N}(U(t))+\lambda\mathcal
	{D}_{N}(U(t))\leq 0
	\end{eqnarray}
holds for any $0\leq t<T$ and $0<\lambda<1$.
\end{lemma}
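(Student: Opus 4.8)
The plan is to derive the energy inequality \eqref{3.2} by combining three families of estimates, mirroring the structure of the linear analysis in Theorem \ref{lyapunov} but now tracking the nonlinear terms $g_1,g_2$. First I would perform the \emph{zeroth-order and top-order natural energy estimates}: for each multi-index $l$ with $|l|\le N$, apply $\partial^l$ to the system \eqref{HPV3}, test the $\rho$-equation with $\frac{P'(\rho+\bar\rho)}{\rho+\bar\rho}\partial^l\rho$, the $u$-equation with $(\rho+\bar\rho)\partial^l u$, and the $\phi_2$-equation with $\frac{\mu}{a}\partial^l\phi_2$ (together with the $\nabla\phi$ variant, testing $\partial^l\nabla$ of the $\phi$-equation with $\frac{\mu D}{a}\partial^l\nabla\phi$). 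Summing and using the cancellation $\bar\rho\,\nabla\cdot u$ against $\frac{P'(\bar\rho)}{\bar\rho}\nabla\rho$ that already appeared in \eqref{LinearF1}--\eqref{LinearF2}, this yields $\frac{d}{dt}$ of the first line of $\mathcal E_N$ plus $\frac{\mu D}{a}\|\nabla\phi\|_N^2$, with dissipation $\alpha\|u\|_N^2 + \frac{\mu}{a}\|\partial_t\phi\|_N^2$ on the left and nonlinear error terms $\sum_{|l|\le N}\langle \partial^l g_1,\ldots\rangle + \langle \partial^l g_2,\ldots\rangle$ on the right. One must be a little careful that differentiating the $\rho$-dependent coefficient $\frac{P'(\rho+\bar\rho)}{\rho+\bar\rho}$ in time produces a harmless term controlled by $\|u\|$ times the energy (using the $\rho$-equation to replace $\partial_t\rho$), which is absorbed under the smallness assumption $\|[\rho,u,\phi]\|_N^2+\|\nabla\phi\|_N^2\ll 1$.

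Second, I would carry out the \emph{dissipation estimate for $\nabla\rho$}, which is the analogue of \eqref{LinearF3}--\eqref{Linear4}: for $|l|\le N-1$, apply $\partial^l$, take the inner product of the $u$-equation with $\partial^l\nabla\rho$, use the $\rho$-equation to rewrite $\langle\partial^l\partial_t u,\partial^l\nabla\rho\rangle = \frac{d}{dt}\langle\partial^l u,\partial^l\nabla\rho\rangle + \bar\rho\|\partial^l\nabla\cdot u\|^2 + (\text{nonlinear})$, and combine with the $|l|\le N-1$, $\nabla$-level version of the $\phi$-equation tested with $\frac{\mu}{2a}\partial^l\nabla\phi$ as in \eqref{LinearF4}. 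The positive-definiteness of the matrix \eqref{matrix}, guaranteed by $bP'(\bar\rho)-a\mu\bar\rho>0$, converts the resulting quadratic form in $(\nabla\rho,\nabla\phi)$ into a genuine lower bound $\lambda(\|\nabla\rho\|_{N-1}^2+\|\nabla\phi\|_{N-1}^2)$, at the cost of a term $C\|u\|_N^2$ that is dominated once $\kappa$ is taken small. Adding $\kappa$ times this to the first estimate produces the full functional $\mathcal E_N$ of \eqref{de.E}; one checks $\mathcal E_N(U(t))\sim \|[\rho,u,\phi]\|_N^2+\|\nabla\phi\|_N^2$ exactly as in the linear case, by positive-definiteness of \eqref{matrix} plus smallness of $\kappa$ and of $\|\rho\|_{L^\infty}$ (so that $(\rho+\bar\rho)\sim\bar\rho$ and $\frac{P'(\rho+\bar\rho)}{\rho+\bar\rho}\sim\frac{P'(\bar\rho)}{\bar\rho}$). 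Note the $\|\partial_t\phi\|^2$ dissipation, combined with the $\phi$-equation $\partial_t\phi = D\Delta\phi + a\rho - b\phi$, together with the $\|\nabla\rho\|^2$ and $\|\nabla\phi\|^2$ terms, recovers control of $\|\Delta\phi\|$ and hence closes $\|\nabla\phi\|_N^2$ in $\mathcal D_N$.

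Third — and this is the step I expect to be the main obstacle — one must show that \emph{all nonlinear error terms are absorbed} into $\lambda\mathcal D_N(U(t))$ up to the small prefactor. The worst contributions come from $g_2 = -u\cdot\nabla u - \big(\frac{P'(\rho+\bar\rho)}{\rho+\bar\rho}-\frac{P'(\bar\rho)}{\bar\rho}\big)\nabla\rho$, whose top-order piece involves $N+1$ derivatives landing on $\rho$ through the coefficient difference, which is formally borderline: it must be handled by integrating by parts to move a derivative back, by exploiting that the coefficient difference is $O(\rho)$ and hence small in $L^\infty$, and by the Sobolev product inequality \eqref{sobin} to distribute derivatives as $\|\rho\|_{L^\infty}\|\nabla^{N+1}\rho\|\cdot(\ldots)$ versus $\|\nabla^N\rho\|\|\nabla\rho\|_{L^\infty}$, with all factors except one sitting inside $\sqrt{\mathcal D_N}$ and the remaining factor inside $\sqrt{\mathcal E_N}\ll 1$; similarly $g_1=-\nabla\cdot(\rho u)$ is estimated after moving the divergence onto the test function. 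Thus every nonlinear term is bounded by $C\sqrt{\mathcal E_N(U(t))}\,\mathcal D_N(U(t))$, which under the a priori smallness hypothesis is $\le \frac{\lambda}{2}\mathcal D_N(U(t))$; absorbing it on the left yields \eqref{3.2}. Finally, Theorem \ref{pro.2.1} follows by the standard continuity argument combining \eqref{3.2} with a local existence theorem and the equivalence $\mathcal E_N\sim \|[\rho,u,\phi]\|_N^2+\|\nabla\phi\|_N^2$.
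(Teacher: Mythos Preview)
Your strategy is correct and matches the paper's proof: the same three-step structure (symmetrizer energy at $|l|\le N$ using the reformulated system \eqref{HPV3R}, the cross-term estimate $\langle\partial^l u,\partial^l\nabla\rho\rangle$ at $|l|\le N-1$ exploiting positive-definiteness of \eqref{matrix}, then the $\kappa$-combination). One correction: in Step~1 you must test the $\phi$-equation with $\frac{\mu}{a}\partial^l\phi_t$, not $\frac{\mu}{a}\partial^l\phi$---this is what yields the $\frac{\mu}{a}\|\partial_t\phi\|_N^2$ dissipation you correctly list and, more importantly, what cancels the $-\mu\langle\partial^l\phi_t,\partial^l\rho\rangle$ term produced when $\mu\langle\partial^l\nabla\phi,(\rho+\bar\rho)\partial^l u\rangle$ is rewritten via the continuity equation (cf.\ \eqref{phi12}--\eqref{phit}). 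Also, once you work with the symmetrizer form \eqref{HPV3R} in Step~1, the top-order pairing $\langle P'(\rho+\bar\rho)\partial^l\nabla\rho,\partial^l u\rangle+\langle P'(\rho+\bar\rho)\partial^l\nabla\cdot u,\partial^l\rho\rangle$ collapses to a lower-order commutator after one integration by parts, so the $N{+}1$-derivative worry you raise about $g_2$ does not arise---it is handled by the Kato-type symmetric cancellation, not by a separate perturbative bound.
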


\begin{proof}Our proof is motivated by the work \cite{Kato} and consists of three steps.
	\medskip
	
		\noindent \textbf{ Step 1.} We first claim that
	\begin{equation}\label{3.3}
	\begin{aligned}
	&\frac{1}{2}\frac{d}{dt}\sum_{|l|\leq N}\left\{\left\langle\frac{P'(\rho+\bar{\rho})}{\rho+\bar{\rho}},
	|\partial^{l}\rho|^2 \right\rangle+\langle(\rho+\bar{\rho}),
	|\partial^{l}u|^2 \rangle-2\mu\langle \partial^{l}\phi,\partial^{l}\rho \rangle +\frac{b\mu}{a}\|\partial^{l}\phi\|^2+\frac{\mu D}{a}\|\partial^{l}\nabla \phi\|^2\right\}\\
	&+\alpha\sum_{|l|\leq N}\langle (\rho+\bar{\rho}),
	|\partial^{l}u|^2 \rangle+\frac{\mu}{a}\|\phi_{t}\|_{N}^2\leq  C\|[\rho,u,\phi]\|_{N}\left(\|\nabla[\rho,u,\phi]\|_{N-1}^{2}+\|\nabla\phi\|^2_{N}\right).
	\end{aligned}
	\end{equation}
	In fact, it is convenient to start from the
	following reformulated form of \eqref{HPV3}:
	\begin{equation}\label{HPV3R}
	\left\{\begin{aligned}
	& \pa_{t}\rho+(\rho+\bar{\rho})\nabla\cdot u=-u\cdot\nabla\rho,\\
	&\pa_t u+\frac{P'(\rho+\bar{\rho})}{\rho+\bar{\rho}}\nabla \rho+\alpha u -\mu\nabla\phi=-u\cdot \nabla u,\\
	&\pa_t \phi-D\Delta \phi-a\rho+b\phi=0.
	\end{aligned}\right.
	\end{equation}

	Applying $\partial^{l}$ to the first equation of (\ref{HPV3R}) for
	$0 \leq |l|\leq N$, multiplying the result by $\frac{P'(\rho+\bar{\rho})}{\rho+\bar{\rho}}\partial^{l}\rho$
	and taking integration in $x$ give
	\begin{eqnarray}\label{4.17}
	&& \begin{aligned}
	&\frac{1}{2}\frac{d}{dt}\left\langle\frac{P'(\rho+\bar{\rho})}{\rho+\bar{\rho}},
	|\partial^{l}\rho|^2 \right\rangle +\left\langle P'(\rho+\bar{\rho})\partial^{l}\nabla\cdot u,
	\partial^{l}\rho \right\rangle\\
	=  &\frac{1}{2}\left\langle\left(\frac{P'(\rho+\bar{\rho})}{\rho+\bar{\rho}}\right)_{t}, |\partial^{l}\rho|^2 \right\rangle
	-\sum_{k< l}C^{k}_{l}\left\langle \partial^{l-k}(\rho+\bar{\rho})
	\partial^{k}\nabla\cdot u,\frac{P'(\rho+\bar{\rho})}{\rho+\bar{\rho}}\partial^{l}\rho \right\rangle\\
	&-\left\langle u\cdot\partial^{l}\nabla\rho,\frac{P'(\rho+\bar{\rho})}{\rho+\bar{\rho}}\partial^{l}\rho \right\rangle
	-\sum_{k< l}C^{k}_{l}\left\langle \partial^{l-k}u\cdot
	\partial^{k}\nabla\rho,\frac{P'(\rho+\bar{\rho})}{\rho+\bar{\rho}}\partial^{l}\rho
	\right\rangle.
	\end{aligned}
	\end{eqnarray}
	Applying $\partial^{l}$ to the second equation of (\ref{HPV3R}) for $0\leq |l|\leq N$, multiplying it by $(\rho+\bar{\rho})\partial^{l}u$, and
	integrating the resulting equation with respect to $x$, we get
	\begin{eqnarray}\label{4.4}
	&& \begin{aligned}
	&\frac{1}{2}\frac{d}{dt}\langle(\rho+\bar{\rho}),
	|\partial^{l}u|^2 \rangle+\left\langle P'(\rho+\bar{\rho})\partial^{l}\nabla\rho,
	\partial^{l}u \right\rangle+\alpha\langle (\rho+\bar{\rho}),
	|\partial^{l}u|^2 \rangle
	\\
	=&\frac{1}{2}\langle (\rho+\bar{\rho})_{t},
	|\partial^{l}u|^2 \rangle
	-\sum_{k<l}C^{k}_{l}\left\langle\partial^{l-k}\left(\frac{P'(\rho+\bar{\rho})}{\rho+\bar{\rho}}\right)
	\partial^{k}\nabla\rho,(\rho+\bar{\rho})\partial^{l}u\right\rangle\\
	&-\left\langle u\cdot\partial^{l}\nabla u,(\rho+\bar{\rho})\partial^{l}u \right\rangle
	-\sum_{k<l}C^{k}_{l}\left\langle \partial^{l-k}u\cdot\partial^{k}\nabla u,(\rho+\bar{\rho})\partial^{l}u \right\rangle\\
&+\mu\langle \partial^{l}\nabla\phi,(\rho+\bar{\rho})\partial^{l}u \rangle.
	\end{aligned}
	\end{eqnarray}
With integration by parts and $\eqref{HPV3R}_{1}$, we can reformulate the last term in \eqref{4.4} as follows:
	\begin{eqnarray}\label{phi12}
		\begin{aligned}
			&\mu\langle \partial^{l}\nabla\phi,(\rho+\bar{\rho})\partial^{l}u \rangle\\
			=&-\mu\langle \partial^{l}\phi,(\rho+\bar{\rho})\partial^{l}\nabla\cdot u \rangle-\mu\langle \partial^{l}\phi,\nabla \rho \partial^{l}u \rangle\\
			=&\mu\langle \partial^{l}\phi,\partial^{l}\rho_{t} \rangle+\mu\langle \partial^{l}\phi,\partial^{l}(u \cdot \nabla\rho)\rangle+\mu\left\langle \partial^{l}\phi,\sum_{k<l}C^{k}_{l} \partial^{l-k}(\rho+\bar{\rho})\partial^{k}\nabla\cdot u\right\rangle-\mu\langle \partial^{l}\phi,\nabla \rho \partial^{l}u \rangle\\
			=&\frac{d}{dt}\mu\langle \partial^{l}\phi,\partial^{l}\rho \rangle-\mu\langle \partial^{l}\phi_{t},\partial^{l}\rho \rangle+\mu\langle \partial^{l}\phi,u \cdot\partial^{l} \nabla\rho\rangle-\mu\langle \partial^{l}\phi,\nabla \rho \partial^{l}u \rangle\\
			&+\mu\left\langle \partial^{l}\phi,\sum_{k<l}C^{k}_{l} \partial^{l-k}(\rho+\bar{\rho})\partial^{k}\nabla\cdot u\right\rangle+\mu\left\langle \partial^{l}\phi,\sum_{k<l}C^{k}_{l}\partial^{l-k}u \cdot\partial^{k} \nabla\rho\right\rangle.
		\end{aligned}
	\end{eqnarray}
	
	Applying $\partial^{l}$ to the third equation of (\ref{HPV3R}) for $0\leq |l|\leq N$, multiplying it by $\frac{\mu}{a}\partial^{l}\phi_{t}$, and
	integrating the resultant equation with respect to $x$, we have
	\begin{equation}\label{phit}
	\begin{aligned}
	&\frac{d}{dt}\left\{\frac{b\mu}{2a}\|\partial^{l}\phi\|^2+\frac{\mu D}{2a}\|\partial^{l}\nabla \phi\|^2\right\}+\frac{\mu}{a}\|\partial^{l}\phi_{t}\|^2=
	\mu\langle\partial^{l}\phi_{t} ,
	\partial^{l}\rho \rangle.
	\end{aligned}
	\end{equation}
	It follows from \eqref{4.17}-\eqref{phit} that
	\begin{equation}\label{sum.2417}
	\begin{aligned}
	&\frac{d}{dt}\left(\frac{1}{2}\left\langle\frac{P'(\rho+\bar{\rho})}{\rho+\bar{\rho}},
	|\partial^{l}\rho|^2 \right\rangle+\frac{1}{2}\langle(\rho+\bar{\rho}),
	|\partial^{l}u|^2 \rangle-\mu\langle \partial^{l}\phi,\partial^{l}\rho \rangle +\frac{b\mu}{2a}\|\partial^{l}\phi\|^2+\frac{\mu D}{2a}\|\partial^{l}\nabla \phi\|^2\right)\\
	&+\alpha\langle (\rho+\bar{\rho}),
	|\partial^{l}u|^2 \rangle+\frac{\mu}{a}\|\partial^{l}\phi_{t}\|^2
	=I_{1}^{l}(t)+\sum_{k<l}C^{k}_{l}I_{k,l}^{l}(t),
	\end{aligned}
	\end{equation}
where
	\begin{eqnarray*}
		&& \begin{aligned}
			I_{1}^{l}(t)=&\left\langle P''(\rho+\bar{\rho})\nabla\rho\partial^{l}\rho,
			\partial^{l}u\right\rangle+\frac{1}{2}\left\langle\left(\frac{P'(\rho+\bar{\rho})}{\rho+\bar{\rho}}\right)_{t}, |\partial^{l}\rho|^2 \right\rangle+
			\frac{1}{2}\langle (\rho+\bar{\rho})_{t},
			|\partial^{l}u|^2 \rangle\\
			&+\frac{1}{2}\left\langle\nabla\cdot\left(u\frac{P'(\rho+\bar{\rho})}{\rho+\bar{\rho}}\right),|\partial^{l}\rho|^{2}
			\right\rangle +\frac{1}{2}\left\langle
			\nabla\cdot(u(\rho+\bar{\rho})),|\partial^{l}u|^{2}
			\right\rangle\\
			&-\mu\langle \partial^{l} \nabla \phi,u \partial^{l}\rho\rangle-\mu\langle \partial^{l} \phi,\nabla \cdot u \partial^{l}\rho\rangle-\mu\langle \partial^{l}\phi,\nabla \rho \partial^{l}u \rangle
		\end{aligned}
	\end{eqnarray*}
	and
	\begin{eqnarray*}
		&& \begin{aligned}
			I_{k,l}^{l}(t)=&-\left\langle \partial^{l-k}(\rho+\bar{\rho})
			\partial^{k}\nabla\cdot u,\frac{P'(\rho+\bar{\rho})}{\rho+\bar{\rho}}\partial^{l}\rho \right\rangle-\left\langle \partial^{l-k}u\cdot
			\partial^{k}\nabla\rho,\frac{P'(\rho+\bar{\rho})}{\rho+\bar{\rho}}\partial^{l}\rho
			\right\rangle\\
			&-\left\langle\partial^{l-k}\left(\frac{P'(\rho+\bar{\rho})}{\rho+\bar{\rho}}\right)
			\partial^{k}\nabla\rho,(\rho+\bar{\rho})\partial^{l}u\right\rangle
			-\left\langle \partial^{l-k}u\cdot\partial^{k}\nabla u,(\rho+\bar{\rho})\partial^{l}u \right\rangle\\
			&+\mu\left\langle \partial^{l}\phi,\partial^{l-k}(\rho+\bar{\rho})\partial^{k}\nabla\cdot u\right\rangle+\mu\left\langle \partial^{l}\phi,\partial^{l-k}u \cdot\partial^{k} \nabla\rho\right\rangle.
		\end{aligned}
	\end{eqnarray*}
	
	When $|l|=0$, it suffices to estimate $I^{l}_{1}(t)$ by the Cauchy-Schwarz and Gagliardo-Nirenberg inequalities that
	\begin{eqnarray}\label{Il10}
		\begin{aligned}
			|I_{1}^{l}(t)|
			\leq & C\|\nabla  \rho\|\|\rho\|_{L^{6}}\|u\|_{L^{3}}+ C \|\nabla \cdot u\|(\|\rho\|_{L^{6}}\|\rho\|_{L^{3}}+ \|u\|_{L^{6}}\|u\|_{L^{3}})\\
			&+C\|u\|_{L^{\infty}}(\|\nabla
			\rho\|\|\rho\|_{L^{6}}\|\rho\|_{L^{3}}+\|\nabla
			\rho\|\|u\|_{L^{6}}\|u\|_{L^{3}})+C\|\nabla\phi\|_{L^{2}}\|\rho\|_{L^{6}}\|u\|_{L^{3}}\\
			&+C\|\phi\|_{L^{3}}\|\nabla\cdot u\|_{L^{2}}\|\rho\|_{L^{6}}+C\|\nabla\rho\|_{L^{2}}\|\phi\|_{L^{6}}\|u\|_{L^{3}}\\
			\leq &
			C(\|[\rho,u,\phi]\|_{1}+\|[\rho,u]\|_{2}^{2})\|\nabla[\rho,u,\phi]\|^{2}\\
		\le	&
			C\|[\rho,u,\phi]\|_{2}\|\nabla[\rho,u,\phi]\|^{2},
		\end{aligned}
 	\end{eqnarray}
 where we have used the facts $\|[\rho,u,\phi]\|_{N} \ll 1$ and   $$\pa_{t}\rho+(\rho+\bar{\rho})\nabla\cdot u=-u\cdot\nabla\rho.$$
When $|l|\geq 1$, it follows from the Cauchy-Schwarz inequality that
	\begin{eqnarray*}
\begin{aligned}
|I_{1}^{l}(t)|
\leq & C\|\nabla  \rho\|_{L^\infty}\|\partial^{l}\rho\|\|\partial^{l}u\|+ C\|\nabla \cdot u\|_{L^\infty}(\|\partial^{l}\rho\|^2+\|\partial^{l}u\|^2)\\
&+C\|u\|_{L^{\infty}}\|\nabla \rho\|_{L^\infty}(\|\partial^{l}\rho\|^2+\|\partial^{l}u\|^2)+C\|u\|_{L^{\infty}}(\|\partial^{l}\rho\|^2+\|\partial^{l}\nabla\phi\|^2)\\
&+C\|\nabla\cdot u\|_{L^{\infty}}(\|\partial^{l}\rho\|^2+\|\partial^{l}\phi\|^2)+C\|\nabla \rho\|_{L^{\infty}}(\|\partial^{l}u\|^2+\|\partial^{l}\phi\|^2)\\
\leq &
C\|[\rho,u]\|_{3}\left(\|\nabla
[\rho,u,\phi]\|_{N-1}^{2}+\|\nabla\phi\|_{N}^{2}\right),
\end{aligned}
\end{eqnarray*}	
which along with \eqref{Il10} yields 	
	\begin{eqnarray}\label{Ikl}
			|I_{1}^{l}(t)| \leq C\|[\rho,u]\|_{3}\left(\|\nabla
		[\rho,u,\phi]\|_{N-1}^{2}+\|\nabla\phi\|_{N}^{2}\right).
	\end{eqnarray}
	
By the Cauchy-Schwarz inequality, we have
	\begin{eqnarray}\label{jk}
	\begin{aligned}
		|I_{k,l}^{l}(t)|
		\leq & C\|\partial^{l}\rho\|\|\partial^{l-k}(\rho+\bar{\rho})
		\partial^{k}\nabla\cdot u\|+
		C\|\partial^{l}\rho\|\|\partial^{l-k}u\cdot \partial^{k}\nabla\rho\|\\
		&+C\|\partial^{l}u\|\left\|\partial^{l-k}\left(\frac{P'(\rho+\bar{\rho})}{\rho+\bar{\rho}}\right)
		\partial^{k}\nabla\rho\right\|+C\|\partial^{l}u\|\left\|\partial^{l-k}u\cdot\partial^{k}\nabla u\right\|\\
		&+C\|\partial^{l}\phi\|\left\|\partial^{l-k}(\rho+\bar{\rho})\partial^{k}\nabla\cdot u\right\|+C\|\partial^{l}\phi\|\left\|\partial^{l-k}u \cdot\partial^{k} \nabla\rho\right\|\\
		=& \D\sum_{i=1}^{6}J_i.
	\end{aligned}
\end{eqnarray}	
For $J_1$, noticing that $|l-k|\geq 1$, there exists some multiple index $s$ with $|s|=1$. We have from \eqref{sobin} that
\begin{eqnarray}\label{j1}
	\begin{aligned}
		|J_1|
		\leq & C\|\partial^{l}\rho\|\|\partial^{l-k-s}\partial^{s} \rho
		\partial^{k}\nabla\cdot u\|\\
	\leq &	C\|\partial^{l}\rho\|\left(\|\partial^{s} \rho\|_{L^\infty}
		\|\nabla^{|l|-1}\nabla\cdot u\|+\|\nabla \cdot u\|_{L^\infty}
		\|\nabla^{|l|-1}\partial^{s}\rho\|\right)\\
	\leq & C\|\nabla^2[\rho,u]\|_{1}\|\nabla
	[\rho,u]\|_{N-1}^{2}\\
		\leq & C\|[\rho,u]\|_{N}\|\nabla
	[\rho,u]\|_{N-1}^{2}.\\
	\end{aligned}
\end{eqnarray}	
Similarly, we can estimate $J_2-J_6$ that
\begin{eqnarray}\label{ji}
	\begin{aligned}
		\D\sum_{i=2}^{6}|J_i|
		\leq & 	C\|[\rho,u,\phi]\|_{N}\|\nabla[\rho,u,\phi]\|_{N-1}^{2}.
	\end{aligned}
\end{eqnarray}	
Plugging \eqref{j1} and \eqref{ji} into \eqref{jk}, we see
	\begin{eqnarray}\label{Ikll}
|I_{k,l}^{l}(t) |\leq C\|[\rho,u,\phi]\|_{N}\|\nabla[\rho,u,\phi]\|_{N-1}^{2}.
\end{eqnarray}

Substituting \eqref{Ikl} and \eqref{Ikll} into \eqref{sum.2417},  one has
\begin{equation}\label{sum.2425}
\begin{aligned}
&\frac{d}{dt}\left(\frac{1}{2}\left\langle\frac{P'(\rho+\bar{\rho})}{\rho+\bar{\rho}},
|\partial^{l}\rho|^2 \right\rangle+\frac{1}{2}\langle(\rho+\bar{\rho}),
|\partial^{l}u|^2 \rangle-\mu\langle \partial^{l}\phi,\partial^{l}\rho \rangle +\frac{b\mu}{2a}\|\partial^{l}\phi\|^2+\frac{\mu D}{2a}\|\partial^{l}\nabla \phi\|^2\right)\\
&+\alpha\langle (\rho+\bar{\rho}),
|\partial^{l}u|^2 \rangle+\frac{\mu}{a}\|\partial^{l}\phi_{t}\|^2
\leq  C\|[\rho,u,\phi]\|_{N}\left(\|\nabla[\rho,u,\phi]\|_{N-1}^{2}+\|\nabla\phi\|^2_{N}\right).
\end{aligned}
\end{equation}
Then (\ref{3.3}) follows by taking the summation of (\ref{sum.2425}) over $|l| \leq N$.

	\medskip
	\noindent{\bf Step 2.}
	We show that
	\begin{eqnarray}\label{step2}
	&&\begin{aligned}
	&\frac{d}{dt}\sum_{|l|\leq N-1}\left\{\langle
	\partial^{l}u,
	\partial^{l}\nabla\rho\rangle+\frac{\mu}{2a}\|\partial^{l}\nabla \phi\|^2\right\}
	+\lambda\|\nabla\rho\|_{N-1}^{2}+\lambda\|\nabla \phi\|_{N}^2\\
	\leq &C\|u\|_{N}^2+C\|[\rho,u]\|_{N}^{2}\|\nabla
	[\rho,u]\|_{N-1}^{2}.
	\end{aligned}
	\end{eqnarray}
	Let $0\leq |l|\leq N-1$. Applying $\partial^{l}$ to
	the second equation of $(\ref{HPV3})$, multiplying it by $\partial^{l}\nabla
	\rho$, taking integration in $x$, using
	integration by parts,
	and replacing $\partial_{t}\rho$ from $(\ref{HPV3})_{1}$, one has
	\begin{equation}\label{ste2ru}
	\begin{aligned}
	&
	\frac{d}{dt}\langle
	\partial^{l}u,
	\partial^{l}\nabla\rho\rangle
	+\frac{P'(\bar{\rho})}{\bar{\rho}} \|\partial^{l}\nabla\rho\|^{2}-\mu\left\langle\partial^{l}\nabla\phi,\partial^{l}\nabla\rho\right\rangle\\
	=&\bar{\rho}\|\nabla\cdot\partial^{l}u\|^{2}-
	\langle\nabla\cdot\partial^{l}u,\partial^{l}g_{1}\rangle -\alpha\langle\partial^{l}u,\nabla\partial^{l}\rho\rangle
	+\langle\partial^{l}g_{2},\nabla\partial^{l}\rho\rangle.
	\end{aligned}
	\end{equation}
	Applying $\partial^{l}\nabla$ to the third equation of  (\ref{HPV3}), then multiplying the resultant equation by $\frac{\mu}{a}\partial^{l}\nabla \phi$, and integrating the result with respect to $x$, we get
	\begin{equation}\label{ste2phi}
	\frac{\mu}{2a}\frac{d}{dt}\|\partial^{l}\nabla \phi\|^2+\frac{\mu D}{a}\|\partial^{l}\nabla^2 \phi\|^2-\mu\left\langle\partial^{l}\nabla\phi,\partial^{l}\nabla\rho\right\rangle+\frac{\mu b}{a}\|\partial^{l}\nabla \phi\|^2=0.
	\end{equation}
	Summing \eqref{ste2ru} and \eqref{ste2phi}, one has
	\begin{equation*}
	\begin{aligned}
	&
	\frac{d}{dt}\left\{\langle
	\partial^{l}u,
	\partial^{l}\nabla\rho\rangle+\frac{\mu}{2a}\|\partial^{l}\nabla \phi\|^2\right\}
	+\frac{P'(\bar{\rho})}{\bar{\rho}} \|\partial^{l}\nabla\rho\|^{2}-2\mu\left\langle\partial^{l}\nabla\phi,\partial^{l}\nabla\rho\right\rangle+\frac{\mu b}{a}\|\partial^{l}\nabla \phi\|^2\\
	&+\frac{\mu D}{a}\|\partial^{l}\nabla^2 \phi\|^2=\bar{\rho}\|\nabla\cdot\partial^{l}u\|^{2}-
	\langle\nabla\cdot\partial^{l}u,\partial^{l}g_{1}\rangle -\alpha\langle\partial^{l}u,\nabla\partial^{l}\rho\rangle
	+\langle\partial^{l}g_{2},\nabla\partial^{l}\rho\rangle.
	\end{aligned}
	\end{equation*}
	Notice that the matrix \eqref{matrix}
	is positive definite. Then there exist two positive constants $C_{1}, C_{2}>0$ such that
	\begin{equation*}
	\frac{P'(\bar{\rho})}{\bar{\rho}} \|\partial^{l}\nabla\rho\|^{2}-2\mu\left\langle\partial^{l}\nabla\phi,\partial^{l}\nabla\rho\right\rangle+\frac{\mu b}{a}\|\partial^{l}\nabla \phi\|^2 \geq C_{1}\left( \|\partial^{l}\nabla\rho\|^{2}+ \|\partial^{l}\nabla\phi\|^{2}\right).
	\end{equation*}
	
	Then, it follows from the Cauchy-Schwarz inequality that
	\begin{equation}\label{ineq.2}
	\begin{aligned}
	&
	\frac{d}{dt}\left\{\langle
	\partial^{l}u,
	\partial^{l}\nabla\rho\rangle+\frac{\mu}{2a}\|\partial^{l}\nabla \phi\|^2\right\}
	+\frac{C_{1}}{2}\|\partial^{l}\nabla\rho\|^{2}+C_{1}\|\partial^{l}\nabla \phi\|^2+\frac{\mu D}{a}\|\partial^{l}\nabla^2 \phi\|^2\\
&\leq C\left(\|\nabla\cdot\partial^{l}u\|^{2}+\|\partial^{l}u\|^2\right)+C\left(\|\partial^{l}g_{1}\|^2
	+\|\partial^{l}g_{2}\|^2\right).
	\end{aligned}
	\end{equation}
	Noticing that  $g_{1},\ g_{2}$ are quadratically
	nonlinear, one has from \eqref{sobin} that
	\begin{equation*}
	\|\partial^{l}g_{1}\|^{2}+\|\partial^{l}g_{2}\|^{2}\leq
	C\|[\rho,u]\|_{N}^{2}\|\nabla
	[\rho,u]\|_{N-1}^{2}.
	\end{equation*}
	Substituting this into (\ref{ineq.2}) and taking
	the summation over $|l|\leq N-1$ imply (\ref{step2}).

	\medskip
	
	\noindent\textbf{Step 3.}
	Multiplying (\ref{step2}) by $\kappa$ and adding resulting inequality to \eqref{3.3}, we have
	\begin{equation*}
	\begin{aligned}
	&\frac{1}{2}\frac{d}{dt}\sum_{|l|\leq N}\left\{\left\langle\frac{P'(\rho+\bar{\rho})}{\rho+\bar{\rho}},
	|\partial^{l}\rho|^2 \right\rangle+\langle(\rho+\bar{\rho}),
	|\partial^{l}u|^2 \rangle-2\mu\langle \partial^{l}\phi,\partial^{l}\rho \rangle +\frac{b\mu}{a}\|\partial^{l}\phi\|^2+\frac{\mu D}{a}\|\partial^{l}\nabla \phi\|^2\right\}\\
	&+\kappa\frac{d}{dt}\sum_{|l|\leq N-1}\left\{\langle
	\partial^{l}u,
	\partial^{l}\nabla\rho\rangle+\frac{\mu}{2a}\|\partial^{l}\nabla \phi\|^2\right\}+\kappa\lambda\|\nabla\rho\|_{N-1}^{2}+\kappa\lambda\|\nabla \phi\|_{N}^2\\
	&+\alpha\sum_{|l|\leq N}\langle (\rho+\bar{\rho}),
	|\partial^{l}u|^2 \rangle+\frac{\mu}{a}\|\phi_{t}\|_{N}^2\\
	\leq& C\kappa\|u\|_{N}^2+ C\|[\rho,u,\phi]\|_{N}\left(\|\nabla[\rho,u,\phi]\|_{N-1}^{2}+\|\nabla\phi\|^2_{N}\right).
	\end{aligned}
	\end{equation*}
	By choosing $\kappa$ and $\|[\rho,u,\phi]\|_{N}$ small enough, we can obtain \eqref{3.2} with \eqref{de.E} and \eqref{de.D}.	This completes the proof of Lemma $\ref{estimate}$.
\end{proof}
\noindent\textbf{Proof of Theorem \ref{pro.2.1}}. We  rewrite $\left\langle\frac{P'(\rho+\bar{\rho})}{\rho+\bar{\rho}},
	|\partial^{l}\rho|^2 \right\rangle$ as follows:
	$$
	\left\langle\frac{P'(\rho+\bar{\rho})}{\rho+\bar{\rho}},
	|\partial^{l}\rho|^2 \right\rangle=\frac{P'(\bar{\rho})}{\bar{\rho}}
	\|\partial^{l}\rho\|^2+\left\langle\frac{P'(\rho+\bar{\rho})}{\rho+\bar{\rho}}-\frac{P'(\bar{\rho})}{\bar{\rho}},
	|\partial^{l}\rho|^2 \right\rangle,
	$$
	which updates \eqref{de.E} to
  \begin{eqnarray}\label{3.12}
	\begin{aligned}
	\mathcal {E}_{N}(U(t))=&\sum_{|l|\leq N}\left\{\frac{P'(\bar{\rho})}{\bar{\rho}}
	\|\partial^{l}\rho\|^2-2\mu\langle \partial^{l}\phi,\partial^{l}\rho \rangle +\frac{b\mu}{a}\|\partial^{l}\phi\|^2+\langle(\rho+\bar{\rho}),
	|\partial^{l}u|^2 \rangle\right\}\\
	&+\sum_{|l|\leq N}\left\langle\frac{P'(\rho+\bar{\rho})}{\rho+\bar{\rho}}-\frac{P'(\bar{\rho})}{\bar{\rho}},
	|\partial^{l}\rho|^2 \right\rangle+\frac{\mu D}{a}\|\nabla \phi\|_{N}^2\\
	&+\kappa\sum_{|l|\leq N-1}\left\{\langle
	\partial^{l}u,
	\partial^{l}\nabla\rho\rangle+\frac{\mu}{2a}\|\partial^{l}\nabla \phi\|^2\right\}
	\end{aligned}
	\end{eqnarray}
	with constant $0<\kappa\ll 1$. By the fact that the matrix \eqref{matrix}
is positive definite, along with the smallness of $\kappa$ and $\|\rho\|_N$, we have that
\begin{equation*}
\mathcal {E}_{N}(U(t))\sim
\|[\rho,u,\phi] \|_{N}^{2}+\|\nabla\phi\|_{N}^2.
\end{equation*}
This together with \eqref{3.2} leads to
\begin{equation*}
\begin{split}
&\|[\rho,u,\phi] \|_{N}^{2}+\|\nabla\phi\|_{N}^2+\int_{0}^{t}\left(\|u\|_N^2+\|\nabla\rho\|_{N-1}^{2}
+\|\nabla \phi\|_{N}^{2}\right)ds\\
\leq& C\left(\|[\rho_0,u_{0},\phi_{0}]\|^2_N+\|\nabla \phi_0]\|^2_N\right).
\end{split}
\end{equation*}
This {\it a priori} estimates combined with the local existence theorem completes the proof of Theorem \ref{pro.2.1}.

\subsection{Asymptotic decay rate to constant states}
In what follows, since we shall apply the linear $L^{p}$-$L^{q}$
time-decay property of the homogeneous system \eqref{HPVL}-\eqref{NLI} to  the nonlinear case, we need
the mild form of the non-linear Cauchy problem
\eqref{HPV3}-\eqref{NI}. By Duhamel's
principle, the solution $U$ can be formally written as
 \begin{eqnarray}\label{sec5.U}
\begin{aligned}
U(t)=&e^{tL}U_{0}+\int_{0}^{t}e^{(t-s)L}[g_{1}(s),g_{2}(s),0]d
s\\
=&e^{tL}U_{0}+\int_{0}^{t}e^{(t-s)L}[\nabla \cdot
f_{1}(s),g_{2}(s),0]d s,
\end{aligned}
\end{eqnarray}
where $e^{tL}$ is the linearized solution operator and the nonlinear source terms $g_{1},g_{2},f_{1}$ were defined in \eqref{sec5.ggg}.
For later use with clarity, we introduce some basic inequalities without proof.
\begin{lemma}\label{halft}For any $r_{1}\in [0,r_{2}]$, $\lambda>0$, and $\alpha>0$, there exists a constant $C>0$ such that
$$
\begin{aligned}
&\int_{0}^{t}(1+t-s)^{-r_{1}}(1+s)^{-r_{2}}ds \leqslant
\arraycolsep=1.5pt
\left\{
  \begin{array}{l}
C(r_{1},r_{2})(1+t)^{-(r_{1}+r_{2}-1)},\ \ \ \ \ \ r_{2}<1\\[3mm]
C(r_{1},r_{2})(1+t)^{-r_{1}}\ln(1+t),\ \ \ \ r_{2}=1 \\[3mm]
C(r_{1},r_{2})(1+t)^{-r_{1}}, \ \ \ \ \ \ \ \ \ \ \ \ \ \ \ r_{2}>1
\end{array}
\right.\\
\text{and} \\
&\int_{0}^{t}{\mathop{\mathrm{e}}}^{-\lambda (t-s)}(1+s)^{-\alpha}ds \leqslant C(1+t)^{-\alpha}.
\end{aligned}
$$
\end{lemma}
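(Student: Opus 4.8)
This is a standard pair of convolution--type decay estimates, and the plan is to treat each integral by the classical device of splitting the time interval at $t/2$: on each half one of the two factors is replaced by its largest value there, and the remaining integral becomes elementary. For $t\in[0,1]$ every quantity on both sides is bounded above and below by absolute constants, so only the regime $t\geq 1$ requires argument; all constants produced will depend only on $r_1,r_2$ (respectively $\lambda,\alpha$), as asserted.

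For the first inequality, write $\int_0^t=\int_0^{t/2}+\int_{t/2}^t$. On $[0,t/2]$ one has $1+t-s\geq 1+t/2\geq\tfrac12(1+t)$, hence $(1+t-s)^{-r_1}\leq C(1+t)^{-r_1}$, while $\int_0^{t/2}(1+s)^{-r_2}\,ds$ is at most $C(1+t)^{1-r_2}$, $C\ln(1+t)$ or $C$ according to whether $r_2<1$, $r_2=1$ or $r_2>1$; this contribution is therefore already dominated by the claimed right-hand side in the corresponding regime. On $[t/2,t]$ one has $1+s\geq\tfrac12(1+t)$, so $(1+s)^{-r_2}\leq C(1+t)^{-r_2}$, and after the substitution $\tau=t-s$ the remaining factor $\int_0^{t/2}(1+\tau)^{-r_1}\,d\tau$ is at most $C(1+t)^{1-r_1}$, $C\ln(1+t)$ or $C$ according to $r_1<1$, $r_1=1$ or $r_1>1$. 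Using the hypothesis $r_1\leq r_2$ one checks in each regime that the product $C(1+t)^{-r_2}\times(\cdots)$ is again dominated by the stated bound; for instance when $r_2>1$ and $r_1<1$ it equals $C(1+t)^{-r_1-(r_2-1)}\leq C(1+t)^{-r_1}$, and the borderline exponents $r_1=1$ or $r_2=1$, where the logarithm enters, are handled the same way (e.g. $(1+t)^{-r_2}\ln(1+t)\leq C(1+t)^{-1}$ when $r_2>1$).

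For the second inequality, split again at $t/2$. On $[0,t/2]$ use $e^{-\lambda(t-s)}\leq e^{-\lambda t/2}$ together with the trivial bound $\int_0^{t/2}(1+s)^{-\alpha}\,ds\leq 1+t$ to obtain a contribution at most $Ce^{-\lambda t/2}(1+t)$, which is $\leq C(1+t)^{-\alpha}$ since exponential decay dominates any polynomial. On $[t/2,t]$ use $(1+s)^{-\alpha}\leq C(1+t)^{-\alpha}$ and $\int_{t/2}^t e^{-\lambda(t-s)}\,ds\leq\int_0^{\infty}e^{-\lambda\tau}\,d\tau=1/\lambda$, yielding a contribution $\leq C\lambda^{-1}(1+t)^{-\alpha}$. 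Adding the two pieces completes the proof.

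There is no substantial obstacle here: the argument is entirely elementary, and the only point requiring a little care is the case bookkeeping in the first estimate, namely verifying that for every admissible exponent pair $(r_1,r_2)$ with $r_1\leq r_2$ the contribution arising from $[t/2,t]$ is absorbed into the asserted right-hand side, including the logarithmic borderline cases $r_1=1$ and $r_2=1$.
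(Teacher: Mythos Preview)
Your argument is correct and is exactly the standard splitting-at-$t/2$ proof one expects for these estimates. Note that the paper does not actually supply a proof of this lemma: it is introduced as a ``basic inequality without proof,'' so there is nothing to compare against beyond confirming that your treatment is complete, which it is.
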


Below we shall show that the solutions obtained in Theorem $ \ref{pro.2.1}$ enjoy the algebraic decay rates under some additional regularity and integrability conditions on initial data. To this end, for given
$U_{0}=[\rho_{0},u_{0}, \phi_0]$, we set
$\epsilon_{m}(U_0)$ as
\begin{eqnarray}\label{def.epsi}
\epsilon_{m}(U_0)=\|U_{0}\|_{m}+\|\nabla\phi_{0}\|_{m}+\|U_{0}\|_{L^{1}},
\end{eqnarray}
for the  integer $m \geq 0$. Then one has the following theorem.

\begin{theorem}\label{pro.2.2}
	Under the assumptions of Theorem \ref{pro.2.1}, if
	$\epsilon_{N}(U_{0})>0$ is small enough, then the solution
	$U=[\rho,u, \phi]$ of \eqref{HPV3}-\eqref{NI}  satisfies
	\begin{eqnarray}\label{V.decay}
	\|U(t)\|_{N}+\|\nabla\phi\|_{N} \leq C \epsilon_{N}(U_{0})(1+t)^{-\frac{3}{4}},
	\end{eqnarray}
	and
	\begin{eqnarray}\label{nablaV.decay}
	\|\nabla U(t)\|_{N-1}+\|\nabla^2\phi\|_{N-1} \leq C
	\epsilon_{N}(U_{0})(1+t)^{-\frac{5}{4}},
	\end{eqnarray}
	for any $t\geq 0$.
\end{theorem}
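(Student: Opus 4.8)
The plan is to close the decay estimates by a bootstrap argument coupling the energy--dissipation inequality of Lemma~\ref{estimate} with the linear $L^{p}$--$L^{q}$ decay of Proposition~\ref{thm.decayrup} through the Duhamel formula~\eqref{sec5.U}. Introduce the time--weighted functional
\begin{equation*}
\mathcal{Q}(t):=\sup_{0\le s\le t}\Big\{(1+s)^{\frac{3}{4}}\big(\|U(s)\|_{N}+\|\nabla\phi(s)\|_{N}\big)+(1+s)^{\frac{5}{4}}\big(\|\nabla U(s)\|_{N-1}+\|\nabla^{2}\phi(s)\|_{N-1}\big)\Big\}.
\end{equation*}
By Theorem~\ref{pro.2.1} the solution is continuous in $t$ with values in $H^{N}$, so $\mathcal{Q}$ is finite and continuous on every interval $[0,T]$, with $\mathcal{Q}(0)\le C\epsilon_{N}(U_{0})$. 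The aim is to establish the self--improving bound $\mathcal{Q}(t)\le C\epsilon_{N}(U_{0})+C\,\mathcal{Q}(t)^{2}$; combined with the smallness of $\epsilon_{N}(U_{0})$ and a standard continuity argument this forces $\mathcal{Q}(t)\le C\epsilon_{N}(U_{0})$ for all $t\ge0$, which is precisely \eqref{V.decay}--\eqref{nablaV.decay}.

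\textbf{Step 1 (from energy to decay).} The dissipation rate $\mathcal{D}_{N}$ in \eqref{de.D} controls the whole of $\|U\|_{N}$ and $\|\nabla\phi\|_{N}$ except the zero--order $L^{2}$ norm of $\rho$ and $\phi$; hence $\mathcal{E}_{N}(U)\lesssim\mathcal{D}_{N}(U)+\|[\rho,\phi]\|^{2}$, and \eqref{3.2} upgrades to $\frac{d}{dt}\mathcal{E}_{N}(U(t))+\lambda\,\mathcal{E}_{N}(U(t))\le C\|[\rho,\phi](t)\|^{2}$. Gronwall's inequality together with the exponential convolution bound in Lemma~\ref{halft} then reduces the decay of $\|U(t)\|_{N}+\|\nabla\phi(t)\|_{N}$ to that of the single low--order quantity $\|[\rho,\phi](t)\|$. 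Running the computation of Lemma~\ref{estimate} with $\partial^{l}$ restricted to $|l|\ge1$ yields in the same way a high--order energy $\mathcal{E}_{N}^{h}\sim\|\nabla U\|_{N-1}^{2}+\|\nabla^{2}\phi\|_{N-1}^{2}$ satisfying $\frac{d}{dt}\mathcal{E}_{N}^{h}(U(t))+\lambda\,\mathcal{E}_{N}^{h}(U(t))\le C\|\nabla[\rho,\phi](t)\|^{2}$, so the decay of $\|\nabla U(t)\|_{N-1}+\|\nabla^{2}\phi(t)\|_{N-1}$ is reduced to that of $\|\nabla[\rho,\phi](t)\|$.

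\textbf{Step 2 (low--order decay via Duhamel, and closing).} Apply Proposition~\ref{thm.decayrup} with $p=1$, $q=2$ to the mild form~\eqref{sec5.U}: the linear part $e^{tL}U_{0}$ contributes $(1+t)^{-3/4}\epsilon_{N}(U_{0})$ to $\|[\rho,u,\phi](t)\|$ and $(1+t)^{-5/4}\epsilon_{N}(U_{0})$ to $\|\nabla[\rho,u,\phi](t)\|$. For the Duhamel integral one writes $g_{1}=-\nabla\cdot f_{1}$ and moves this derivative onto the semigroup (which it commutes with), so the first slot is estimated with one extra half power of decay, while $g_{2}$ is genuinely quadratic; the product inequality \eqref{sobin} and the definition of $\mathcal{Q}(t)$ give $\|f_{1}(s)\|_{L^{1}\cap H^{N}}\lesssim\mathcal{Q}(t)^{2}(1+s)^{-3/2}$ and $\|g_{2}(s)\|_{L^{1}\cap H^{N-1}}\lesssim\mathcal{Q}(t)^{2}(1+s)^{-2}$. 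Inserting these bounds and invoking the convolution estimates of Lemma~\ref{halft} for the low--frequency part, together with the exponential bound for the high--frequency remainders in Proposition~\ref{thm.decayrup}, one obtains $\|[\rho,u,\phi](t)\|\lesssim(1+t)^{-3/4}(\epsilon_{N}(U_{0})+\mathcal{Q}(t)^{2})$ and $\|\nabla[\rho,u,\phi](t)\|\lesssim(1+t)^{-5/4}(\epsilon_{N}(U_{0})+\mathcal{Q}(t)^{2})$. Feeding these into the Gronwall inequalities of Step~1 (again via Lemma~\ref{halft}) gives $\|U(t)\|_{N}+\|\nabla\phi(t)\|_{N}\lesssim(1+t)^{-3/4}(\epsilon_{N}(U_{0})+\mathcal{Q}(t)^{2})$ and $\|\nabla U(t)\|_{N-1}+\|\nabla^{2}\phi(t)\|_{N-1}\lesssim(1+t)^{-5/4}(\epsilon_{N}(U_{0})+\mathcal{Q}(t)^{2})$, hence $\mathcal{Q}(t)\le C\epsilon_{N}(U_{0})+C\mathcal{Q}(t)^{2}$, and the continuity argument closes the proof.

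The main obstacle is Step~2: one must check that every nonlinear contribution decays at least like $(1+s)^{-3/2}$ in time, for otherwise the time convolutions in Lemma~\ref{halft} would lose a power; this is exactly what forces the split $g_{1}=-\nabla\cdot f_{1}$ together with the commutation of the extra derivative past $e^{(t-s)L}$ to recover the additional half power, and it also requires estimating the higher Sobolev norms of $f_{1}$ and $g_{2}$ that enter the exponentially decaying high--frequency part of Proposition~\ref{thm.decayrup}, while keeping the number of derivatives within the available regularity $N\ge3$. A subsidiary point is the (elementary but essential) observation in Step~1 that $\mathcal{D}_{N}$ and its higher--order analogue miss precisely the zero mode of $(\rho,\phi)$, which is what turns \eqref{3.2} into a genuine Gronwall inequality.
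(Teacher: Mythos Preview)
Your proposal is correct and follows the same overall strategy as the paper (energy--dissipation inequality + Duhamel with the linear $L^p$--$L^q$ decay), but the execution differs in two places. First, for the full energy the paper does \emph{not} pass directly to the Gronwall inequality $\frac{d}{dt}\mathcal{E}_{N}+\lambda\mathcal{E}_{N}\le C\|[\rho,\phi]\|^{2}$; instead it multiplies \eqref{3.2} by $(1+t)^{\ell}$, iterates twice in $\ell$ (this is \eqref{sec5.ED}), defines $\mathcal{E}_{N,\infty}(t)=\sup_{0\le s\le t}(1+s)^{3/2}\mathcal{E}_{N}(U(s))$, and closes a quadratic inequality for $\mathcal{E}_{N,\infty}$ alone (Lemma~\ref{lem.Bsigma}). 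Your observation that $\mathcal{D}_{N}$ dominates $\mathcal{E}_{N}$ up to $\|[\rho,\phi]\|^{2}$ is valid and gives a shorter route to the same conclusion. Second, the paper proves \eqref{V.decay} and \eqref{nablaV.decay} \emph{sequentially}: once \eqref{V.decay} is established, it is fed as an input into Lemma~\ref{estimate3} to estimate $\|\nabla[\rho,\phi]\|$ and then into the high--order Gronwall inequality of Lemma~\ref{estimate2}; there is no single coupled bootstrap functional $\mathcal{Q}$. Your simultaneous bootstrap is legitimate and slightly more economical; the paper's sequential argument has the advantage that the closing step for \eqref{nablaV.decay} requires no smallness beyond what was already used for \eqref{V.decay}.

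One minor point: the divergence trick $g_{1}=-\nabla\cdot f_{1}$ is not actually needed at this stage (the paper reserves it for the sharper estimates in Section~4.2.3 and Proposition~\ref{asy.small.lem}); since $\|[g_{1},g_{2}](s)\|_{L^{1}}\lesssim \|U(s)\|\,\|\nabla U(s)\|\lesssim \mathcal{Q}(t)^{2}(1+s)^{-2}$ already, the convolution with $(1+t-s)^{-5/4}$ closes without the extra half power. Your claim that $\|g_{2}(s)\|_{H^{N-1}}\lesssim \mathcal{Q}(t)^{2}(1+s)^{-2}$ is slightly optimistic for the top--order term (one only gets $(1+s)^{-3/2}$ from $\|u\|_{L^\infty}\|\nabla^{N}u\|$), but this is harmless because those high Sobolev norms only enter through the exponentially decaying high--frequency remainder in Proposition~\ref{thm.decayrup}.
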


Next, we give the proof of  Theorem \ref{pro.2.2} which consists of several steps shown below.

\subsubsection{Decay rate for the full instant energy functional}
Recall from the proof of Lemma \ref{estimate} that
\begin{eqnarray}\label{sec5.ENV0}
\dfrac{d}{dt}\mathcal {E}_{N}(U(t))+\lambda \mathcal {D}_{N}(U(t))
\leq 0,
\end{eqnarray}
for any $t\geq 0$. We now apply the time-weighted energy estimate and iteration to the
Lyapunov inequality $\eqref{sec5.ENV0}$. Let $\ell \geq 0$. Multiplying
$\eqref{sec5.ENV0}$ by $(1+t)^{\ell}$ and taking integration over
$[0,t]$ give
\begin{eqnarray*}
	\begin{aligned}
		& (1+t)^{\ell}\mathcal {E}_{N}(U(t))+\lambda
		\int_{0}^{t}(1+s)^{\ell}\mathcal {D}_{N}(U(s))d s \\
		\leq & \mathcal {E}_{N}(U_{0})+ \ell
		\int_{0}^{t}(1+s)^{\ell-1}\mathcal {E}_{N}(U(s))d s.
	\end{aligned}
\end{eqnarray*}
Noticing that
\begin{eqnarray*}
	\mathcal {E}_{N}(U(t))
	\leq C (D_{N}(U(t))+ \|[\rho(t),\phi(t)]\|^{2}),
\end{eqnarray*}
we have
\begin{eqnarray*}
	\begin{aligned}
		& (1+t)^{\ell}\mathcal {E}_{N}(U(t))+\lambda
		\int_{0}^{t}(1+s)^{\ell}\mathcal {D}_{N}(U(s))d s \\
		\leq & \mathcal {E}_{N}(U_{0})+ C \ell
		\int_{0}^{t}(1+s)^{\ell-1}\|[\rho(s),\phi(s)]\|^{2} d s\\
		&+ C\ell\int_{0}^{t}(1+s)^{\ell-1}\mathcal {D}_{N}(U(s))d s.
	\end{aligned}
\end{eqnarray*}
Similarly, it holds that
\begin{eqnarray*}
	\begin{aligned}
		& (1+t)^{\ell-1}\mathcal {E}_{N}(U(t))+\lambda
		\int_{0}^{t}(1+s)^{\ell-1}\mathcal {D}_{N}(U(s))d s \\
		\leq & \mathcal {E}_{N}(U_{0})+ C (\ell-1)
		\int_{0}^{t}(1+s)^{\ell-2}\|[\rho(s),\phi(s)]\|^{2}d s\\
		&+ C(\ell-1)\int_{0}^{t}(1+s)^{\ell-2}\mathcal {D}_{N}(U(s))d s,
	\end{aligned}
\end{eqnarray*}
and
\begin{eqnarray*}
	\mathcal {E}_{N}(U(t))+\lambda \int_{0}^{t}\mathcal
	{D}_{N}(U(s))d s \leq \mathcal {E}_{N}(U_{0}).
\end{eqnarray*}
Then, for $1<\ell<2$, it follows by iterating the above estimates
that
\begin{eqnarray}\label{sec5.ED}
\begin{aligned}
& (1+t)^{\ell}\mathcal {E}_{N}(U(t))+\lambda
\int_{0}^{t}(1+s)^{\ell}\mathcal {D}_{N}(U(s))d s \\
\leq & C \mathcal {E}_{N}(U_{0})+ C
\int_{0}^{t}(1+s)^{\ell-1}\|[\rho(s),\phi(s)]\|^{2}d s.
\end{aligned}
\end{eqnarray}
On the other hand, to estimate the integral term on the right-hand side  of
$\eqref{sec5.ED}$, let's define
\begin{eqnarray}\label{sec5.def}
\mathcal {E}_{N,\infty}(U(t))=\sup\limits_{0\leq s \leq t}
(1+s)^{\frac{3}{2}}\mathcal {E}_{N}(U(s)).
\end{eqnarray}

Then we have the following estimates.
\begin{lemma}\label{lem.Bsigma}
	For any $t\geq0$, it holds that
	\begin{eqnarray}\label{lem.tildeB}
	&&\begin{aligned}
	\|[\rho(t),\phi(t)]\|^{2}
	\leq  C(1+t)^{-\frac{3}{2}}\left(\mathcal
	{E}_{N,\infty}^{2}(U(t))+\|U_{0}\|_{L^{1}\cap  L^{2}}^{2}+\|\nabla\phi_{0}\|^2\right).
	\end{aligned}
	\end{eqnarray}
\end{lemma}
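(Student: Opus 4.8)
The plan is to insert the Duhamel representation \eqref{sec5.U} into the linearized $L^{p}$-$L^{q}$ decay estimates of Proposition \ref{thm.decayrup} and to close the resulting time-integral inequality by Lemma \ref{halft}. Write $U(t)=e^{tL}U_{0}+\int_{0}^{t}e^{(t-s)L}[g_{1}(s),g_{2}(s),0]\,ds$ and estimate separately, in $L^{2}(\mathbb{R}^{3})$, the $\rho$- and $\phi$-components of the linear term and of the Duhamel integral. Everywhere I would apply Proposition \ref{thm.decayrup} with $p=1$, $q=r=s=2$ and $m=0$, so that the exponent $[3(\tfrac1r-\tfrac1q)]_{+}$ vanishes and the ``high-frequency'' contributions appear only through $e^{-\lambda t}$ times $L^{2}$-norms with no extra derivatives.

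For the linear part $e^{tL}U_{0}$, this choice yields for both the $\rho$- and $\phi$-components the bound $C(1+t)^{-3/4}\|U_{0}\|_{L^{1}}+Ce^{-\lambda t}\big(\|U_{0}\|+\|\nabla\phi_{0}\|\big)$, and after absorbing the exponential term this is $\le C(1+t)^{-3/4}\big(\|U_{0}\|_{L^{1}\cap L^{2}}+\|\nabla\phi_{0}\|\big)$; squaring produces the term $C(1+t)^{-3/2}\big(\|U_{0}\|_{L^{1}\cap L^{2}}^{2}+\|\nabla\phi_{0}\|^{2}\big)$ of \eqref{lem.tildeB}.

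For the Duhamel part, the two structural facts I would exploit are that the nonlinear source $[g_{1},g_{2},0]$ has vanishing $\phi$-component and that $g_{1}=-\nabla\cdot(\rho u)$ and $g_{2}$ from \eqref{sec5.ggg} are quadratically nonlinear. Using the product estimate \eqref{sobin} and the smallness $\|U\|_{N}\ll1$, one obtains $\|[g_{1},g_{2},0](s)\|_{L^{1}}+\|[g_{1},g_{2},0](s)\|\le C\|U(s)\|_{N}^{2}$ (here $\|g_{1}(s)\|_{L^{1}}\le C\|\nabla(\rho u)(s)\|_{L^{1}}\le C\|U(s)\|_{N}^{2}$ thanks to the divergence structure). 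By the definition \eqref{sec5.def} of $\mathcal{E}_{N,\infty}$ together with $\mathcal{E}_{N}(U)\sim\|U\|_{N}^{2}+\|\nabla\phi\|_{N}^{2}$, all these nonlinear quantities are $\le C(1+s)^{-3/2}\mathcal{E}_{N,\infty}(U(t))$ for $0\le s\le t$. Plugging this into Proposition \ref{thm.decayrup} under the integral, the $\rho$- and $\phi$-components of the Duhamel term are controlled by
$$
\mathcal{E}_{N,\infty}(U(t))\int_{0}^{t}\Big[C(1+t-s)^{-\tfrac34}+C(1+t-s)^{-\tfrac54}+Ce^{-\lambda(t-s)}\Big](1+s)^{-\tfrac32}\,ds,
$$
and Lemma \ref{halft} (with $r_{1}\in\{\tfrac34,\tfrac54\}\le r_{2}=\tfrac32>1$ for the algebraic pieces, and its second inequality for the $e^{-\lambda(t-s)}$ piece) bounds this by $C(1+t)^{-3/4}\mathcal{E}_{N,\infty}(U(t))$; squaring gives the term $C(1+t)^{-3/2}\mathcal{E}_{N,\infty}^{2}(U(t))$. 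Adding the two contributions and using $(a+b)^{2}\le 2a^{2}+2b^{2}$ yields \eqref{lem.tildeB}.

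The main obstacle I anticipate is purely the careful bookkeeping in applying Proposition \ref{thm.decayrup} to the Duhamel integrand: one must keep straight which of the three component slots the source feeds (in particular that the $\phi$-data slot is empty), use that the divergence in $g_{1}$ lets one estimate $\|g_{1}\|_{L^{1}}$ by first derivatives of $\rho u$ (still quadratic, hence $\le C\|U\|_{N}^{2}$), and check that the slowest resulting convolution exponent is $r_{1}=\tfrac34$ against $r_{2}=\tfrac32>1$, so that Lemma \ref{halft} returns the rate $(1+t)^{-3/4}$ with no logarithmic loss. The remaining ingredients --- the product estimate \eqref{sobin}, the a priori energy equivalence, and Lemma \ref{halft} --- are routine.
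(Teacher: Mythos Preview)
Your proposal is correct and follows essentially the same approach as the paper: apply Proposition \ref{thm.decayrup} with $m=0$, $q=r=2$, $p=1$ to the Duhamel formula \eqref{sec5.U}, bound $\|[g_{1},g_{2}]\|_{L^{1}\cap L^{2}}\le C\mathcal{E}_{N}(U(s))\le C(1+s)^{-3/2}\mathcal{E}_{N,\infty}(U(t))$, and close with Lemma \ref{halft}. The paper is slightly more terse (it records only the dominant kernel $(1+t-s)^{-3/4}$ rather than the separate $(1+t-s)^{-5/4}$ and $e^{-\lambda(t-s)}$ pieces), but the argument is the same.
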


\begin{proof}
	By applying the linear estimates on $\rho$ and $\phi$ with   $m=0,\ q=r=2,\ p=s=1 $ in  Proposition \ref{thm.decayrup} to the mild solution form \eqref{sec5.U} respectively, one has
	\begin{eqnarray}\label{sec5.decayrho}
\begin{aligned}
	\|[\rho(t),\phi(t)]\|\leq & C
	(1+t)^{-\frac{3}{4}} \left(\|U_{0}\|_{L^{1}\cap L^{2}}+\|\nabla\phi_{0}\|\right)\\
	&+C
	\int_{0}^{t}(1+t-s)^{-\frac{3}{4}}\left(\|[g_{1}(s),g_{2}(s)]\|_{L^{1}\cap L^{2}}\right)ds.
\end{aligned}
	\end{eqnarray}
	Recall the definition $\eqref{sec5.ggg}$ of $g_{1}$ and
	$g_{2}$. It is straightforward to verify that for
	any $0\leq s\leq t$,
	\begin{eqnarray*}
		\|[g_{1}(s),g_{2}(s)]\|_{L^{1}\cap L^{2}}\leq
		C \mathcal {E}_{N}(U(s))\leq (1+s)^{-\frac{3}{2}}\mathcal
		{E}_{N,\infty}(U(t)),
	\end{eqnarray*}
	where we have used $\eqref{sec5.def}$. Putting the above inequality into $\eqref{sec5.decayrho}$ and using Lemma \ref{halft} give
	\begin{eqnarray*}
		\|[\rho,\phi](t)\|\leq C (1+t)^{-\frac{3}{4}}
		(\|U_{0}\|_{L^{1}\cap L^{2}}+\|\nabla\phi_{0}\|_{L^{2}}+\mathcal {E}_{N,\infty}(U(t))),
	\end{eqnarray*}
	which implies $\eqref{lem.tildeB}$.  This completes the proof of Lemma
	$\ref{lem.Bsigma}$.
\end{proof}
Next, we prove the uniform-in-time bound of $\mathcal
{E}_{N,\infty}(U(t))$ which yields the time-decay rates of the
Lyapunov functional $\mathcal
{E}_{N}(U(t))$ and thus $\|U(t)\|_{N}^{2}$. In fact, by taking $\ell =\frac{3}{2}+\epsilon$
in $\eqref{sec5.ED}$ with $\epsilon>0$ small enough, one has
\begin{eqnarray*}
	\begin{aligned}
		& (1+t)^{\frac{3}{2}+\epsilon}\mathcal {E}_{N}(U(t))+\lambda
		\int_{0}^{t}(1+s)^{\frac{3}{2}+\epsilon}\mathcal {D}_{N}(U(s))d s \\
		\leq & C \mathcal {E}_{N}(U_{0})+ C
		\int_{0}^{t}(1+s)^{\frac{1}{2}+\epsilon}\|[\rho(s),\phi(s)]\|^{2}d s.
	\end{aligned}
\end{eqnarray*}
Here, using $\eqref{lem.tildeB}$ and the fact that $\mathcal
{E}_{N,\infty}(U(t))$ is non-decreasing in $t$, we have that
\begin{eqnarray*}
	\begin{aligned}
		&\int_{0}^{t}(1+s)^{\frac{1}{2}+\epsilon}\|[\rho(s),\phi(s)]\|^{2}d s\\
		\leq  & C(1+t)^{\epsilon}\left(\mathcal
		{E}_{N,\infty}^{2}(U(t))+\|U_{0}\|_{L^{1}\cap L^{2}}^{2}+\|\nabla\phi_{0}\|^2\right).
	\end{aligned}
\end{eqnarray*}
Therefore, it follows that
\begin{eqnarray*}
	\begin{aligned}
		& (1+t)^{\frac{3}{2}+\epsilon}\mathcal {E}_{N}(U(t))+\lambda
		\int_{0}^{t}(1+s)^{\frac{3}{2}+\epsilon}\mathcal {D}_{N}(U(s))d s \\
		\leq & C \mathcal {E}_{N}(U_{0})+ C(1+t)^{\epsilon}\left(\mathcal
		{E}_{N,\infty}^{2}(U(t))+\|U_{0}\|_{L^{1}\cap L^{2}}^{2}+\|\nabla\phi_{0}\|^2\right),
	\end{aligned}
\end{eqnarray*}
which implies
\begin{eqnarray*}
	\begin{aligned}
		(1+t)^{\frac{3}{2}}\mathcal {E}_{N}(U(t))
		\leq & C\left( \mathcal {E}_{N}(U_{0})+\mathcal
		{E}_{N,\infty}^{2}(U(t))+\|U_{0}\|_{L^{1}\cap L^{2}}^{2}+\|\nabla\phi_{0}\|^2\right) .
	\end{aligned}
\end{eqnarray*}
Thus, one has
\begin{eqnarray*}
	\mathcal {E}_{N,\infty}(U(t))
	\leq C \left( \epsilon_{N}^{2}(U_{0})+
	\mathcal {E}_{N,\infty}^{2}(U(t))\right).
\end{eqnarray*}
Here,  we have used the definition of $\epsilon_{N}(U_{0})$. Since
$\epsilon_{N}(U_{0})>0$ is sufficiently small, $\mathcal
{E}_{N,\infty}(U(t)) \leq C \epsilon_{N}^{2}(U_{0})$ holds true
for any $t\geq 0$, which implies
\begin{eqnarray*}
\|U(t)\|_{N}+\|\nabla\phi\|_{N}\leq C \mathcal {E}_{N}(U(t))^{1/2}
\leq C  \epsilon_{N}(U_{0})(1+t)^{-\frac{3}{4}}
\end{eqnarray*}
for any $t\geq 0$. This yields \eqref{V.decay} in Theorem
\ref{pro.2.2}.

\subsubsection{Decay rate for the higher-order  instant
	energy functional}
	In this subsection, we shall continue the proof of Theorem \ref{pro.2.2} for the second part \eqref{nablaV.decay}. In fact, it can be
reduced to the time-decay estimates only on $\|\nabla [\rho,\phi]\|$ by the following lemma.

\begin{lemma}\label{estimate2}
	Let $U=[\rho,u,\phi]$ be the solution to the Cauchy
	problem $\eqref{HPV3}$-$ \eqref{NI}$. Then if
	$\mathcal {E}_{N}(U_{0})$ is sufficiently small, there exist a
	high-order instant energy functional $\mathcal {E}_{N}^{h}(\cdot)$ with
	\begin{equation*}
	\mathcal
	{E}_{N}^{h}(U(t))\sim \|\nabla U(t)\|_{N-1}^{2}+\|\nabla^2\phi\|_{N-1},
	\end{equation*}
	and the corresponding dissipation rate $\mathcal {D}_{N}^{h}(\cdot)$ satisfying
	\begin{eqnarray}\label{sec5.high}
	&& \frac{d}{dt}\mathcal {E}_{N}^{h}(U(t))+\lambda\mathcal
	{D}^{h}_{N}(U(t))\leq C\|\nabla[
	\rho,\phi]\|^{2}
	\end{eqnarray}
for any $ t \geq 0$.
\end{lemma}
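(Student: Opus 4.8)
The plan is to rerun the three-step energy scheme of Lemma~\ref{estimate}, but now applying $\partial^l$ only for multi-indices with $1\le|l|\le N$, so that every term produced carries at least one spatial derivative; the resulting functional is the ``derivative part'' of \eqref{de.E}, and its natural dissipation is $\mathcal{D}^h_N(U)=\|\nabla u\|_{N-1}^2+\|\nabla^2\rho\|_{N-2}^2+\|\nabla^2\phi\|_{N-1}^2$. For Step~1, for $1\le|l|\le N$ I would reproduce the computation \eqref{4.17}--\eqref{sum.2425} verbatim --- testing $\partial^l$ of the three equations of \eqref{HPV3R} by $\tfrac{P'(\rho+\bar{\rho})}{\rho+\bar{\rho}}\partial^l\rho$, by $(\rho+\bar{\rho})\partial^l u$ (with the last term rewritten through \eqref{phi12}), and by $\tfrac{\mu}{a}\partial^l\phi_t$ respectively --- and sum over $1\le|l|\le N$. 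Since the case $|l|=0$ is now excluded, the borderline estimate \eqref{Il10} drops out and only \eqref{Ikl}, \eqref{Ikll} enter, so the right-hand side is $\le C\|[\rho,u,\phi]\|_N(\|\nabla[\rho,u,\phi]\|_{N-1}^2+\|\nabla^2\phi\|_{N-1}^2)$, while on the left one gets $\tfrac{d}{dt}$ of the $\sum_{1\le|l|\le N}$-part of \eqref{de.E} together with $\tfrac{\mu D}{a}\|\nabla^2\phi\|_{N-1}^2$, plus the dissipation $\alpha\|\nabla u\|_{N-1}^2+\tfrac{\mu}{a}\|\nabla\phi_t\|_{N-1}^2$.

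For Step~2, for $1\le|l|\le N-1$ I would apply $\partial^l$ to the second equation of \eqref{HPV3}, test with $\partial^l\nabla\rho$ and eliminate $\partial_t\rho$ via the first equation exactly as in \eqref{ste2ru}, and apply $\partial^l\nabla$ to the third equation and test with $\tfrac{\mu}{a}\partial^l\nabla\phi$ as in \eqref{ste2phi}. Adding the two identities and using the positive-definiteness of the matrix \eqref{matrix} produces the lower bound $C_1(\|\partial^l\nabla\rho\|^2+\|\partial^l\nabla\phi\|^2)$; combined with the $\tfrac{\mu D}{a}\|\partial^l\nabla^2\phi\|^2$ term and summed over $1\le|l|\le N-1$, this delivers the dissipation $\|\nabla^2\rho\|_{N-2}^2+\|\nabla^2\phi\|_{N-1}^2$, the two $\phi$-pieces $\|\nabla^2\phi\|_{N-2}^2$ and $\|\nabla^3\phi\|_{N-2}^2$ assembling into $\|\nabla^2\phi\|_{N-1}^2$. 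On the right there remains $C\|\nabla u\|_{N-1}^2$ coming from $\bar{\rho}\|\nabla\cdot\partial^l u\|^2$ and from $\alpha\langle\partial^l u,\partial^l\nabla\rho\rangle$, together with a cubic term $\le C\|[\rho,u]\|_N^2\|\nabla[\rho,u]\|_{N-1}^2$ controlled by \eqref{sobin} since $g_1,g_2$ are quadratic.

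For Step~3, I would set $\mathcal{E}^h_N(U)$ equal to the Step~1 functional plus $\kappa\sum_{1\le|l|\le N-1}\{\langle\partial^l u,\partial^l\nabla\rho\rangle+\tfrac{\mu}{2a}\|\partial^l\nabla\phi\|^2\}$ with $0<\kappa\ll1$, and add the Step~1 inequality to $\kappa$ times the Step~2 inequality. For $\kappa$ small the term $C\kappa\|\nabla u\|_{N-1}^2$ is absorbed into $\alpha\|\nabla u\|_{N-1}^2$, and --- exactly as in the proof of Theorem~\ref{pro.2.1} --- the positive-definiteness of \eqref{matrix} together with $\kappa\ll1$ and $\|\rho\|_N\ll1$ (the latter available from Theorem~\ref{pro.2.1} once $\mathcal{E}_N(U_0)$ is small) gives $\mathcal{E}^h_N(U)\sim\|\nabla U\|_{N-1}^2+\|\nabla^2\phi\|_{N-1}^2$. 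Finally, writing $\|\nabla[\rho,u,\phi]\|_{N-1}^2+\|\nabla^2\phi\|_{N-1}^2\sim\mathcal{D}^h_N(U)+\|\nabla[\rho,\phi]\|^2$, the part equivalent to $\mathcal{D}^h_N(U)$ on the right is absorbed by the smallness factor $\|[\rho,u,\phi]\|_N$, and the residual first-order piece is simply retained, yielding \eqref{sec5.high}.

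The only genuine difficulty is the degree bookkeeping in Step~2: one must check that summing over $1\le|l|\le N-1$ (rather than from $|l|=0$ as in \eqref{step2}) still recovers the full $\|\nabla^2\phi\|_{N-1}^2$ in the dissipation, and that the norms left uncontrolled by $\mathcal{D}^h_N$ are precisely $\|\nabla\rho\|^2$ and $\|\nabla\phi\|^2$ --- i.e. exactly $\|\nabla[\rho,\phi]\|^2$ with nothing of higher order --- so that \eqref{sec5.high} closes in the stated form; these low-order norms will then be disposed of via the linear $L^1$--$L^2$ decay in the subsequent time-weighted iteration leading to \eqref{nablaV.decay}.
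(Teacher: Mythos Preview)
Your proposal is correct and follows essentially the same approach as the paper: restrict the three-step scheme of Lemma~\ref{estimate} to $1\le|l|\le N$ in Step~1 and $1\le|l|\le N-1$ in Step~2, define $\mathcal{E}^h_N$ as in \eqref{de.Eh} and $\mathcal{D}^h_N$ as in \eqref{de.Dh}, and combine with a small $\kappa$. Your bookkeeping is in fact more explicit than the paper's (which simply writes ``the details of proof are omitted for simplicity''); in particular your observation that the cubic right-hand side decomposes as $C\|[\rho,u,\phi]\|_N\big(\mathcal{D}^h_N(U)+\|\nabla[\rho,\phi]\|^2\big)$, with the first piece absorbed by smallness and the residual $\|\nabla[\rho,\phi]\|^2$ retained, is exactly the mechanism producing \eqref{sec5.high}.
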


\begin{proof}
	The proof can be carried out by slightly modifying the proof of Theorem
	$\ref{estimate}$. In fact, by conducting the energy estimates on the only  high-order derivatives, similar to
	$\eqref{3.3}$ and $\eqref{step2}$,
	it can be verified that
	
	\begin{eqnarray*}
		&&\begin{aligned}
			&\frac{1}{2}\frac{d}{dt}\sum_{1\leq |l|\leq N}\left\{\left\langle\frac{P'(\rho+\bar{\rho})}{\rho+\bar{\rho}},
			|\partial^{l}\rho|^2 \right\rangle+\langle(\rho+\bar{\rho}),
			|\partial^{l}u|^2 \rangle-2\mu\langle \partial^{l}\phi,\partial^{l}\rho \rangle +\frac{b\mu}{a}\|\partial^{l}\phi\|^2+\frac{\mu D}{a}\|\partial^{l}\nabla \phi\|^2\right\}\\
			&+\alpha\sum_{1\leq |l|\leq N}\langle (\rho+\bar{\rho}),
			|\partial^{l}u|^2 \rangle+\frac{\mu}{a}\|\nabla\phi_{t}\|_{N-1}^2\leq  C\|[\rho,u,\phi]\|_{N}\left(\|\nabla[\rho,u,\phi]\|_{N-1}^{2}+\|\nabla^2\phi\|^2_{N-1}\right)
		\end{aligned}
	\end{eqnarray*}
	and
	\begin{eqnarray*}
		&&\begin{aligned}
			&\frac{d}{dt}\sum_{1\leq |l|\leq N-1}\left\{\langle
			\partial^{l}u,
			\partial^{l}\nabla\rho\rangle+\frac{\mu}{2a}\|\partial^{l}\nabla \phi\|^2\right\}
			+\lambda\|\nabla^2\rho\|_{N-2}^{2}+\lambda\|\nabla^2\phi\|_{N-1}^2\\
			\leq &C\|\nabla u\|_{N-1}^2+C\|[\rho,u]\|_{N}^{2}\|\nabla
			[\rho,u]\|_{N-1}^{2}.
		\end{aligned}
	\end{eqnarray*}
	Here, the details of proof are omitted for simplicity. Now, similar
	to $\eqref{3.12}$, let us define
	
	\begin{eqnarray}\label{de.Eh}
	\begin{aligned}
	\mathcal {E}^{h}_{N}(U(t))=&\sum_{1\leq |l|\leq N}\left\{\left\langle\frac{P'(\rho+\bar{\rho})}{\rho+\bar{\rho}},
	|\partial^{l}\rho|^2 \right\rangle-2\mu\langle \partial^{l}\phi,\partial^{l}\rho \rangle +\frac{b\mu}{a}\|\partial^{l}\phi\|^2+\langle(\rho+\bar{\rho}),
	|\partial^{l}u|^2 \rangle\right\}\\
	&+\frac{\mu D}{a}\|\nabla^2\phi\|_{N-1}^2+\kappa\sum_{1\leq |l|\leq N-1}\left\{\langle
	\partial^{l}u,
	\partial^{l}\nabla\rho\rangle+\frac{\mu}{2a}\|\partial^{l}\nabla \phi\|^2\right\},
	\end{aligned}
	\end{eqnarray}
	and
\begin{equation}\label{de.Dh}
\mathcal {D}^h_{N}(U(t))=\displaystyle\alpha \sum_{1\leq|l|\leq
	N}\int_{\mathbb{R}^3}(\rho+\bar{\rho})
|\partial^{l}u|^2dx+\lambda\|\nabla^2\rho\|_{N-2}^{2}
+\lambda\|\nabla ^2\phi\|_{N-1}^{2}.
\end{equation}
Thus, $\mathcal
	{E}_{N}^{h}(U(t))\sim \|\nabla U(t)\|_{N-1}^{2}+\|\nabla^2\phi\|_{N-1}$ for $0<\kappa \ll 1$. Furthermore, the
	linear combination of previously obtained two estimates with
	coefficients corresponding to $\eqref{de.Eh}$ yields
	$\eqref{sec5.high}$. This completes the proof of Lemma \ref{estimate2}.
\end{proof}

By comparing \eqref{de.Dh} with $\eqref{de.Eh}$ for the definitions
of $ \mathcal {E}_{N}^{h}(U(t))$ and  $ \mathcal {D}_{N}^{h}(U(t))$,
it follows from $\eqref{sec5.high}$ that
\begin{eqnarray*}
	&& \frac{d}{dt}\mathcal {E}_{N}^{h}(U(t))+\lambda\mathcal
	{E}^{h}_{N}(U(t))\leq
	C\|\nabla [\rho,\phi](t)\|^{2},
\end{eqnarray*}
which implies
\begin{eqnarray}\label{sec.ee}
\mathcal {E}_{N}^{h}(U(t))\leq {e^{-\lambda t}}
\mathcal{E}_{N}^{h}(U_{0}) +C\int_{0}^{t}{\exp\{-\lambda(t-s)\}}\|\nabla [\rho,\phi](s)\|^{2}d s.
\end{eqnarray}
To estimate the time integral term on the right-hand side of the above
inequality, we show the following result.

\begin{lemma}\label{estimate3}
	Under the assumptions of Theorem $\ref{pro.2.1}$, if $
	\epsilon_{N}(U_{0})$ defined in $\eqref{def.epsi}$ is sufficiently small then
	\begin{eqnarray}\label{sec5.highBE}
	&&\begin{aligned} \|\nabla [\rho,\phi](t)\|^{2}\leq
	C\epsilon_{N}^{2}(U_{0})(1+t)^{-\frac{5}{2}}
	\end{aligned}
	\end{eqnarray}
	holds for any $ t \geq 0$.
\end{lemma}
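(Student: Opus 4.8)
The plan is to read off the claimed bound directly from the Duhamel representation \eqref{sec5.U} of the solution, using the linear $L^{p}$–$L^{q}$ estimates of Proposition \ref{thm.decayrup} together with the decay \eqref{V.decay} already established. The key structural observation is that, because \eqref{V.decay} is in hand, the quadratic nonlinearities $g_{1},g_{2}$ of \eqref{sec5.ggg} can be controlled entirely through the full energy norm $\|U(s)\|_{N}$, so that no further bootstrap in $\|\nabla[\rho,\phi]\|$ itself is required.

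First I would estimate the linear part $e^{tL}U_{0}$ of \eqref{sec5.U}. Applying Proposition \ref{thm.decayrup} to its $\rho$- and $\phi$-components with $m=1$, $q=2$, $p=1$, $r=s=2$ produces the polynomially decaying term $C(1+t)^{-5/4}\bigl(\|U_{0}\|_{L^{1}}+\|\rho_{0}\|_{L^{1}}\bigr)$ together with exponentially decaying terms bounded by $Ce^{-\lambda t}\bigl(\|\nabla U_{0}\|_{1}+\|\nabla^{2}\phi_{0}\|\bigr)$; since $N\geq 3$, all of these are $\leq C\epsilon_{N}(U_{0})(1+t)^{-5/4}$.

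Next I would apply the same linear estimate, slot by slot, to the Duhamel integrand with source $[g_{1}(s),g_{2}(s),0]=[-\nabla\cdot f_{1}(s),g_{2}(s),0]$ (the vanishing $\phi$-slot annihilates the term carrying a norm of the third component). This bounds the nonlinear contribution to $\|\nabla[\rho,\phi](t)\|$ by
\[
C\int_{0}^{t}\Bigl[(1+t-s)^{-\frac54}\|g_{1}(s)\|_{L^{1}}+(1+t-s)^{-\frac74}\|[g_{1},g_{2}](s)\|_{L^{1}}+e^{-\lambda(t-s)}\|\nabla[g_{1},g_{2}](s)\|\Bigr]\,ds.
\]
Using the product inequality \eqref{sobin}, the structure of \eqref{sec5.ggg}, and \eqref{V.decay}, I would estimate the source norms as
\[
\|[g_{1},g_{2}](s)\|_{L^{1}}\leq C\|[\rho,u](s)\|\,\|\nabla[\rho,u](s)\|\leq C\epsilon_{N}^{2}(U_{0})(1+s)^{-\frac32},
\]
\[
\|\nabla[g_{1},g_{2}](s)\|\leq C\|[\rho,u](s)\|_{N}^{2}\leq C\,\mathcal{E}_{N}(U(s))\leq C\epsilon_{N}^{2}(U_{0})(1+s)^{-\frac32},
\]
where the crude bound $\|\nabla[\rho,u](s)\|\leq\|[\rho,u](s)\|_{N}\leq C\epsilon_{N}(U_{0})(1+s)^{-3/4}$ coming from \eqref{V.decay} already suffices, and $\mathcal{E}_{N}(U(s))\leq C\epsilon_{N}^{2}(U_{0})(1+s)^{-3/2}$ by \eqref{sec5.def} and the first part of Theorem \ref{pro.2.2}. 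Substituting into the integral above and invoking Lemma \ref{halft} — with $r_{1}=\tfrac54<r_{2}=\tfrac32>1$ for the first kernel, an even larger exponent for the $(1+t-s)^{-7/4}$ kernel, and the exponential-kernel estimate for the last term — gives a total of $\leq C\epsilon_{N}^{2}(U_{0})(1+t)^{-5/4}$. Combining with the linear part yields $\|\nabla[\rho,\phi](t)\|\leq C\epsilon_{N}(U_{0})(1+t)^{-5/4}$, and squaring gives \eqref{sec5.highBE}.

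The only genuine obstacle is bookkeeping: one must track every term produced by Proposition \ref{thm.decayrup} (in particular the derivative orders $m+[\ell+3(\tfrac1r-\tfrac1q)]_{+}$ appearing in the exponentially decaying pieces, and the disappearance of the $\phi$-slot contribution in the nonlinear source), and one must verify that each nonlinear source norm decays at the integrable rate $(1+s)^{-3/2}$. This last point is exactly where the previously proven decay \eqref{V.decay} is indispensable — without it one could only close the argument through a simultaneous bootstrap on $\|\nabla[\rho,\phi]\|$.
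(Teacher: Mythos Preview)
Your proposal is correct and follows essentially the same route as the paper: apply Proposition~\ref{thm.decayrup} with $m=1$, $q=2$, $p=1$, $r=s=2$ to the Duhamel form \eqref{sec5.U}, bound the nonlinear sources via \eqref{V.decay} by $C\epsilon_{N}^{2}(U_{0})(1+s)^{-3/2}$, and close with Lemma~\ref{halft}. The only cosmetic difference is that the paper lumps all the algebraic kernels together as $(1+t-s)^{-5/4}\|[g_{1},g_{2}](s)\|_{L^{1}\cap\dot{H}^{1}}$, whereas you keep the $(1+t-s)^{-7/4}$ and $e^{-\lambda(t-s)}$ pieces separate; both lead to the same bound.
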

\begin{proof} Suppose that
	$\epsilon_{N}(U_{0})>0$ is sufficiently small. It follows from \eqref{V.decay} that
	\begin{eqnarray}\label{UN+4}
	\|U(t)\|_{N}+\|\nabla\phi(t)\|_{N}  \leq C  \epsilon_{N}(U_{0})(1+t)^{-\frac{3}{4}}.
	\end{eqnarray}
	Applying the linear
	estimate on $\rho,\phi$  by setting $m=1,\ q=r=s=2,\ p=1$ in Proposition \ref{thm.decayrup} to the mild
	form \eqref{sec5.U} respectively, one has
	\begin{eqnarray}\label{sec5.nablarho}
\begin{aligned}
	\|\nabla [\rho,\phi] (t)\|\leq & C (1+t)^{-\frac{5}{4}}\left(
	\|U_{0}\|_{ L^1\cap\dot{H}^{1}}+\|\nabla^{2}\phi_{0}\|\right)\\
   &+C	\int_{0}^{t}(1+t-s)^{-\frac{5}{4}}\|[g_{1}(s),g_{2}(s),0]\|_{L^{1}\cap\dot{H}^{1}}ds.
   \end{aligned}
	\end{eqnarray}
	Recalling the definition $\eqref{sec5.ggg}$, we can verify that
	\begin{eqnarray*}
		\|[g_{1}(s),g_{2}(s)(s)]\|_{L^{1}\cap\dot{H}^{1}}\leq
		C\|U(t)\|_{3}^{2}\leq C
		\epsilon_{N}^{2}(U_{0})(1+s)^{-\frac{3}{2}}.
	\end{eqnarray*}
	Then it follows from $\eqref{UN+4}$, $\eqref{sec5.nablarho}$ and Lemma \ref{halft} that
	\begin{eqnarray*}
		\begin{aligned}
			\|\nabla [\rho,\phi](t)\| \leq
			C\epsilon_{N}(U_{0})(1+t)^{-\frac{5}{4}}.
		\end{aligned}
	\end{eqnarray*}
The proof  of Lemma $\ref{estimate3}$ is completed.
\end{proof}
By $\eqref{sec.ee}$, $\eqref{sec5.highBE}$  and Lemma \ref{halft}, we immediately have
\begin{eqnarray*}
	\mathcal {E}_{N}^{h}(U(t))\leq {\exp\{-\lambda t\}}
	\mathcal{E}_{N}^{h}(U_{0})+C
	\epsilon_{N}^{2}(U_{0})(1+t)^{-\frac{5}{2}},
\end{eqnarray*}
which proves  \eqref{nablaV.decay} in Theorem \ref{pro.2.2}.
\medskip

Based on the decay properties for $\|U\|_{N}$ and $\|\nabla U\|_{N-1}$ in Theorem \ref{pro.2.2}, we can obtain the decay estimates of  $\|\nabla u\|$ and $\|\nabla^2\rho\|$, which will be used later to explore the faster decay rates from nonlinear equation to the linear case.
\begin{lemma}
	Under the assumptions of Theorem $\ref{pro.2.1}$, if $
	\epsilon_{N}(U_{0})$ defined in $\eqref{def.epsi}$ is sufficiently small then
	\begin{eqnarray}\label{sec5.highu1}
	&&\begin{aligned} \|\nabla u(t)\|\leq
	C\epsilon_{N}(U_{0})(1+t)^{-\frac{7}{4}},
	\end{aligned}
	\end{eqnarray}
	and
	\begin{eqnarray}\label{sec5.highrho}
	&&\begin{aligned} \|\nabla^2 \rho(t)\|\leq
	C\epsilon_{N}(U_{0})(1+t)^{-\frac{7}{4}}
	\end{aligned}
	\end{eqnarray}
	hold for any $ t \geq 0$.
\end{lemma}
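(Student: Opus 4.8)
The plan is to run the Duhamel-plus-linear-decay argument of Lemma \ref{estimate3}, but now extracting the sharper rate $(1+t)^{-7/4}$ that follows from the $L^1$ integrability of the data once the linear estimates of Proposition \ref{thm.decayrup} are applied at one higher order. Starting from the mild form \eqref{sec5.U}, $U(t)=e^{tL}U_0+\int_0^t e^{(t-s)L}[g_1(s),g_2(s),0]\,ds$ with $g_1=-\nabla\cdot f_1$, $f_1=\rho u$, I would apply the $u$-part of Proposition \ref{thm.decayrup} with $m=1$ and the $\rho$-part with $m=2$, in both cases with $p=1$ and $q=r=s=2$. For the linear piece $e^{tL}U_0$ the leading time factor is $(1+t)^{-3/4-1}=(1+t)^{-7/4}$ (from the $\|\rho_0\|_{L^1}$ slot), the $\|U_0\|_{L^1}$ slot contributes a faster $(1+t)^{-9/4}$, and the remaining terms decay exponentially and are controlled by $\|\nabla U_0\|_{H^1}+\|\nabla^2[\rho_0,\phi_0]\|$ (for $\nabla u$) or $\|\nabla^2 U_0\|+\|\nabla^3\phi_0\|$ (for $\nabla^2\rho$), all of which are $\leq C\epsilon_N(U_0)$ when $N\geq 3$. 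Thus the linear contribution is $\leq C\epsilon_N(U_0)(1+t)^{-7/4}$.

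For the Duhamel integral I would apply the same linear estimates with $[g_1(s),g_2(s),0]$ in place of $U_0$, at time $t-s$. Two structural features are crucial: the $\phi$-component of the source vanishes, so the $\phi_0$-slot terms of Proposition \ref{thm.decayrup} drop out; and $g_1=-\nabla\cdot f_1$ enters only through the $\rho_0$-slot, which carries the favourable weight $(1+t-s)^{-7/4}$ in both the $u$ ($m=1$) and $\rho$ ($m=2$) estimates and should be kept as an $L^1$ quantity without redistributing the derivative. The nonlinear source is then bounded using the decay already established: from \eqref{V.decay}, \eqref{nablaV.decay} and Lemma \ref{estimate3} one has $\|U(s)\|_N\leq C\epsilon_N(U_0)(1+s)^{-3/4}$, $\|\nabla U(s)\|_{N-1}\leq C\epsilon_N(U_0)(1+s)^{-5/4}$ and $\|\nabla[\rho,\phi](s)\|\leq C\epsilon_N(U_0)(1+s)^{-5/4}$, whence, using \eqref{sobin} and $\|[\rho,u]\|_{L^\infty}\leq C\|U\|_2$, one checks $\|\nabla\cdot f_1(s)\|_{L^1}+\|g_2(s)\|_{L^1}\leq C\|U\|_N\|\nabla U\|_{N-1}\leq C\epsilon_N^2(U_0)(1+s)^{-2}$ and, for $j=2,3$, $\|\nabla^j f_1(s)\|+\|\nabla^{j-1}g_2(s)\|\leq C\|U\|_2\|\nabla U\|_{N-1}\leq C\epsilon_N^2(U_0)(1+s)^{-2}$. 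Feeding these into the convolution and exponential kernels and invoking Lemma \ref{halft} (with $r_1\in\{7/4,9/4\}$, the latter dominated via $(1+t-s)^{-9/4}\leq(1+t-s)^{-7/4}$, against $r_2=2>1$) gives a Duhamel contribution $\leq C\epsilon_N^2(U_0)(1+t)^{-7/4}$.

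Combining the linear and Duhamel contributions and absorbing $\epsilon_N^2(U_0)\leq\epsilon_N(U_0)$ for $\epsilon_N(U_0)$ small yields \eqref{sec5.highu1} and \eqref{sec5.highrho}. The only delicate point is the bookkeeping of the nonlinear terms: a crude bound $\|\nabla^j f_1\|+\|\nabla^{j-1}g_2\|\leq C\|U\|_3^2\leq C\epsilon_N^2(U_0)(1+s)^{-3/2}$ would only propagate the rate $(1+t)^{-3/2}$, which is short of $(1+t)^{-7/4}$; one must instead split every product so that one factor carries a genuine derivative and hence decays at the faster rate $(1+s)^{-5/4}$ furnished by \eqref{nablaV.decay} and Lemma \ref{estimate3}, which forces the nonlinear contributions down to $(1+s)^{-2}$. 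This is exactly the place where the previously established higher-order decay is indispensable.
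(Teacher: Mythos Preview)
Your proposal is correct and follows essentially the same route as the paper: apply the $u$-estimate of Proposition~\ref{thm.decayrup} with $m=1$, $p=1$, $q=r=s=2$ and the $\rho$-estimate with $m=2$, $p=1$, $q=r=s=2$ to the mild form \eqref{sec5.U}, bound the nonlinear source by $C\epsilon_N^2(U_0)(1+s)^{-2}$ via the product splitting $\|U\|_{N}\|\nabla U\|_{N-1}$ from \eqref{V.decay}--\eqref{nablaV.decay}, and close with Lemma~\ref{halft}. Your explicit remark that the crude bound $(1+s)^{-3/2}$ would be insufficient and that one must use the extra derivative to gain the full $(1+s)^{-2}$ is exactly the point of the argument.
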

\begin{proof}
By applying the linear estimates on $u$ with  $m=1,\ q=r=s=2,\ p=1 $ in  Proposition \ref{thm.decayrup} to the mild solution form \eqref{sec5.U}, one has
\begin{eqnarray}\label{nablau}
		\begin{aligned}
	\|\nabla u(t)\|\leq & C(1+t)^{-\frac{7}{4}}\|\rho_{0}\|_{L^{1}}
	+C(1+t)^{-\frac{9}{4}}\left(\|U_{0}\|_{L^{1}\cap \dot{H}^{1}}+\|\nabla^2 [\rho_{0},\phi_{0}]\|\right)\\
	&+C\int_{0}^{t}\big((1+t-s)^{-\frac{7}{4}}\|g_{1}(s)\|_{L^{1}}\big)dx\\
&+\int_{0}^{t}(1+t-s)^{-\frac{9}{4}}\left(\|[g_{1},g_{2}](s)\|_{L^{1}\cap \dot{H}^{1}}\|\nabla^2 g_{1}(s)\|\right)ds.
	\end{aligned}
	\end{eqnarray}
By applying the linear estimates on $\rho$ with  $m=2,\ q=r=s=2,\ p=1 $ in  Proposition \ref{thm.decayrup} to the mild solution form \eqref{sec5.U}, we obtain
\begin{eqnarray}\label{nabla2rho}
		\begin{aligned}
	\|\nabla^2\rho(t)\| \leq &C(1+t)^{-\frac{7}{4}}\|\rho_{0}\|_{L^{1}}
	+C(1+t)^{-\frac{9}{4}}\left(\|U_{0}\|_{L^{1}\cap \dot{H}^{2}}+\|\nabla^2 \phi_{0}\|\right)\\
	&+C\int_{0}^{t}(1+t-s)^{-\frac{7}{4}}\|g_{1}(s)\|_{L^{1}}+(1+t-s)^{-\frac{9}{4}}\|[g_{1},g_{2}](s)\|_{L^{1}\cap \dot{H}^{2}}ds.
	\end{aligned}
	\end{eqnarray}
Recall the definition $\eqref{sec5.ggg}$, it is straightforward to verify
	\begin{eqnarray*}
		\|[g_{1},g_{2}](s)\|_{L^{1}\cap H^{2}}\leq
		C\|U(t)\|_{3}\|\nabla U(t)\|_{2}+\|\nabla U(t)\|^2_{2}\leq C
		\epsilon_{N}^{2}(U_{0})(1+s)^{-2},
	\end{eqnarray*}
	and
	\begin{eqnarray*}
		\|\nabla^2g_{1}(s)\|_{L^{2}}\leq
		C\|\nabla U(t)\|^2_{2}\leq C
		\epsilon_{N}^{2}(U_{0})(1+s)^{-\frac{5}{2}}.
	\end{eqnarray*}
	Then it follows from \eqref{nablau}, \eqref{nabla2rho} and Lemma \ref{halft} that
	\begin{eqnarray*}
		\begin{aligned}
			\|\nabla u(t)\|+\|\nabla^2\rho\| \leq
			C\epsilon_{N}(U_{0})(1+t)^{-\frac{7}{4}}.
		\end{aligned}
	\end{eqnarray*}
	\end{proof}
\subsubsection{Decay rate of $L^{q}$-norm}
For $L^{2}$ rate of $\rho$ and $\phi$, it is
easy to see from Lemma $\ref{lem.Bsigma}$ that
\begin{eqnarray}
\|\rho(t)\|+\|\phi (t)\| \leq C
\epsilon_{3}(U_{0})(1+t)^{-\frac{3}{4}}.\label{rate.brho}
\end{eqnarray}
Applying the $L^{\infty}$ linear estimate
on $[\rho,\phi]$ with  $m=0,\ q=\infty, r=2,\ s=p=1 $ in Proposition $\ref{thm.decayrup}$ to the mild form \eqref{sec5.U},
one has
\begin{eqnarray*}
\begin{aligned} \|[\rho,\phi](t)\|_{L^{\infty}}\leq & C (1+t)^{-\frac{3}{2}}
 \|\rho_{0}\|_{L^1}+ (1+t)^{-2}
 \|U_{0}\|_{L^1}+e^{-\lambda t}\|U_{0}\|_{L^{1}}\\
 &+e^{-\lambda t}\|\nabla^2 U_{0}\|_{L^{2}}+e^{-\lambda t}\|\nabla ^3\phi_{0}\|_{L^{2}}\\
&+C
\int_{0}^{t}\left\{(1+t-s)^{-\frac{3}{2}} \|g_{1}(s)\|_{L^1}+ (1+t-s)^{-2}
 \|[g_{1},g_{2}](s)\|_{L^1}\right\}ds\\
 &+C
\int_{0}^{t}\left\{e^{-\lambda (t-s)}\|[g_{1},g_{2}](s)\|_{L^{1}}+e^{-\lambda (t-s)}\|\nabla^2[g_{1},g_{2}](s)\|_{L^{2}}\right\}ds.
\end{aligned}
\end{eqnarray*}
It follows from $\eqref{sec5.ggg}$ that
\begin{eqnarray*}
\|[g_{1},g_{2}](s)\|_{L^{1}\cap\dot{H}^{2}}
\leq C\|U(s)\|^{2}_{3}\leq
C\epsilon_{3}^{2}(U_{0})(1+s)^{-\frac{3}{2}}.
\end{eqnarray*}
We have from Lemma \ref{halft} that
\begin{eqnarray*}
 \|[\rho,\phi](t)\|_{L^{\infty}}\leq C\epsilon_{3}(U_{0})(1+t)^{-\frac{3}{2}},
\end{eqnarray*}
which together with \eqref{rate.brho} and  $L^{2}-L^{\infty} $ interpolation gives
\begin{eqnarray}\label{rholq}
\|[\rho,\phi](t)\|_{L^{q}}\leq C
\epsilon_{3}(U_{0})(1+t)^{-\frac{3}{2}+\frac{3}{2q}}, \ \  2\leq q \leq \infty.
\end{eqnarray}

Applying the  second  estimate on $u$ with  $m=0,\ q=r=2,\ s=p=1$ in Proposition
\ref{thm.decayrup} to the mild form \eqref{sec5.U}, we get
\begin{equation*}
\|u(t)\| \leq C(1+t)^{-\frac{5}{4}}\left(
\|U_{0}\|_{L^{1}\cap L^{2}}+\|\nabla \phi_{0}\|\right)+\int_{0}^{t}(1+t-s)^{-\frac{5}{4}}\|[g_{1}(s),g_{2}(s)]\|_{L^{1}\cap L^{2}}ds.
\end{equation*}
By  \eqref{V.decay}, it follows that
\begin{equation*}
\|[g_{1}(s),g_{2}(s)]\|_{L^{1}\cap L^{2}}
\leq C\|U(t)\|^{2}_{2}\leq
C\epsilon_{3}^{2}(U_{0})(1+t)^{-\frac{3}{2}}.
\end{equation*}
Therefore, one has by using Lemma \ref{halft} that
\begin{eqnarray}\label{uL2}
\|u(t)\| \leq C\epsilon_{3}(U_{0})(1+t)^{-\frac{5}{4}}.
\end{eqnarray}

For the $L^{\infty}$-norm decay rate, applying
the  second  estimate on $u$ with  $m=0,\ q=\infty,\ r=2,\ s=p=1$ in Proposition
\ref{thm.decayrup} to the mild form \eqref{sec5.U}, one has
\begin{equation*}
\begin{aligned}
\|u(t)\|_{L^{\infty}} \leq & C (1+t)^{-2}\|\rho_{0}\|_{L^{1}}+(1+t)^{-\frac{5}{2}}
 \|U_{0}\|_{L^1}+e^{-\lambda t}\| U_{0}\|_{L^{1}}\\
& +e^{-\lambda t}\|\nabla^2 U_{0}\|_{L^{2}}+e^{-\lambda t}\|\nabla ^3[\rho,\phi_{0}]\|_{L^{2}}\\
 &+C
\int_{0}^{t}\left\{(1+t-s)^{-2} \|g_{1}(s)\|_{L^1}+ (1+t-s)^{-\frac{5}{2}}
 \|[g_{1},g_{2}](s)\|_{L^1}\right\}ds\\
 &+C
\int_{0}^{t}e^{-\lambda (t-s)}\left(\|[g_{1},g_{2}](s)\|_{L^{1}\cap \dot{H}^{2}}+\|\nabla^3g_{1}(s)]\|\right)ds.
 \end{aligned}
\end{equation*}
It is straightforward to verify that for  any $0\leq s\leq t$,
\begin{equation*}
\|[g_{1},g_{2}](s)\|_{L^1}\leq C\|U(s)\| \|\nabla U\|\leq C\epsilon_{3}(U_{0})(1+s)^{-2},
\end{equation*}
\begin{eqnarray*}
\|\nabla^2[g_{1},g_{2}](s)\|_{L^{2}}
\leq C\|\nabla U(s)\|^{2}_{2}\leq
C\epsilon_{3}^{2}(U_{0})(1+s)^{-\frac{5}{2}},
\end{eqnarray*}
and
\begin{eqnarray*}
\|\nabla^3g_{1}\|_{L^{2}}
\leq C\|\nabla U(s)\|^{2}_{3}\leq
C\epsilon_{4}^{2}(U_{0})(1+s)^{-\frac{5}{2}}.
\end{eqnarray*}
Then, we have from the above inequalities and Lemma \ref{halft} that
\begin{eqnarray*}
 \|u(t)\|_{L^{\infty}}\leq C\epsilon_{4}(U_{0})(1+t)^{-2}.
\end{eqnarray*}
By \eqref{uL2} and $L^{2}-L^{\infty} $ interpolation, we have
\begin{eqnarray}\label{uLq}
\|u(t)\|_{L^{q}}\leq C
\epsilon_{4}(U_{0})(1+t)^{-2+\frac{3}{2q}}, \ \ \ 2\leq q \leq \infty.
\end{eqnarray}

\subsection{Asymptotic decay rates to the linearized problem}
In this section, we shall prove that the solution $U=[\rho,u,\phi]$ of the nonlinear Cauchy problem
\eqref{HPV3}-\eqref{NI} can be approximated by the solution of the corresponding
linearized problem $\eqref{HPVL}$-$\eqref{NLI}$ in large time with faster decay rates.

\begin{proposition}\label{asy.small.lem}
	Suppose that $\epsilon_{4}(U_{0})>0$ is sufficiently small, and
	$U=[\rho, u,\phi]$ is the solution to the Cauchy
	problem \eqref{HPV3}-\eqref{NI} with initial data $U_0$. Let $\FP_{1}, \FP_{2}$ and $\FP_{3}$ denote the projection operators along the component $\rho, u, \phi$ of the solution $e^{tL}U_{0}$, respectively.  Then it holds that for any $t\geq 0$ and $2\leq q \leq \infty$,
	\begin{eqnarray}
	&&\left\|
	\rho(t)-\FP_{1}e^{tL}U_{0}\right\|_{L^{q}}\leq C
	(1+t)^{-2+\frac{3}{2q}}, \label{asy.small.dec.rho}\\
	&&\left\|u(t)-\FP_{2}e^{tL}U_{0}\right\|_{L^{q}}\leq
	C (1+t)^{-\frac{5}{2}+\frac{3}{2q}},\label{asy.small.dec.u}\\
	&&\left\|\phi(t)-\FP_{3}e^{tL}U_{0}\right\|_{L^{q}}\leq C
	(1+t)^{-2+\frac{3}{2q}}.\label{asy.small.dec.E}
	\end{eqnarray}
\end{proposition}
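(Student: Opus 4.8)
The plan is to start from the Duhamel representation \eqref{sec5.U}, i.e. $U(t)-e^{tL}U_{0}=\int_{0}^{t}e^{(t-s)L}[\nabla\cdot f_{1}(s),g_{2}(s),0]\,ds$, and apply the linear $L^{p}$–$L^{q}$ estimates of Proposition \ref{thm.decayrup} componentwise to the integrand, feeding in the already-proven decay rates \eqref{V.decay}, \eqref{nablaV.decay}, \eqref{sec5.highu1}, \eqref{sec5.highrho}, \eqref{rholq}, \eqref{uLq} for the nonlinear solution and closing the time integrals with Lemma \ref{halft}. Before treating all three components it is convenient to observe that the $\phi$-equation of \eqref{HPV3} is \emph{linear} in $\rho$: writing $\rho^{L}=\FP_{1}e^{tL}U_{0}$, $\phi^{L}=\FP_{3}e^{tL}U_{0}$, the difference $w=\phi-\phi^{L}$ solves $\partial_{t}w-D\Delta w+bw=a(\rho-\rho^{L})$ with $w|_{t=0}=0$, so $w(t)=a\int_{0}^{t}e^{(D\Delta-b)(t-s)}(\rho-\rho^{L})(s)\,ds$; since $\|e^{(D\Delta-b)\tau}\|_{L^{q}\to L^{q}}\le e^{-b\tau}$, the second half of Lemma \ref{halft} reduces \eqref{asy.small.dec.E} to \eqref{asy.small.dec.rho}. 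So it suffices to bound the $\rho$- and $u$-components.

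For the $\rho$-component I would apply the $\rho$-part of Proposition \ref{thm.decayrup} with $m=0$ to $\FP_{1}e^{(t-s)L}[\nabla\cdot f_{1},g_{2},0]$, taking $p=1$ for the low-frequency part and $r=s=2$ for the high-frequency (exponentially damped) part. The structural point is that the source's first slot is the divergence $g_{1}=-\nabla\cdot f_{1}$, and the velocity slot $g_{2}$ enters the $\rho$-dynamics only through $\nabla\cdot g_{2}$; by the spectral representations of Section \ref{sec4} this produces an extra factor $|k|$ at low frequencies, hence one extra half power of time decay, so that $\|\FP_{1}e^{(t-s)L}[\nabla\cdot f_{1},g_{2},0]\|_{L^{q}}\lesssim (1+t-s)^{-\frac32(1-\frac1q)-\frac12}(\|f_{1}\|_{L^{1}}+\|g_{2}\|_{L^{1}})$ plus exponentially damped terms. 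Since $f_{1}=\rho u$ and each term of $g_{2}$ carries a derivative, \eqref{V.decay}–\eqref{sec5.highrho} give $\|f_{1}(s)\|_{L^{1}}+\|g_{2}(s)\|_{L^{1}}\lesssim \epsilon_{N}^{2}(U_{0})(1+s)^{-2}$, while the high-frequency terms involve bounded, fast-decaying norms like $\|\nabla^{2}[g_{1},g_{2}](s)\|_{L^{2}}\lesssim (1+s)^{-2}$. Inserting these and using Lemma \ref{halft} (note $\frac32(1-\frac1q)+\frac12=2-\frac3{2q}\le 2$) yields \eqref{asy.small.dec.rho}. Running the same argument with $m=1$ gives the companion bound $\|\nabla(\rho-\rho^{L})(t)\|_{L^{q}}\lesssim (1+t)^{-\frac52+\frac3{2q}}$ (and, via the reduction above, the same for $\nabla(\phi-\phi^{L})$), after interpolating the $L^{1}$-bounds on $f_{1},g_{2}$ with their faster $L^{2}$-bounds, i.e. choosing the auxiliary exponent $p\in[1,2]$ optimally so that the near-$s=t$ and near-$s=0$ contributions balance.

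For the $u$-component I would use the damping structure rather than the semigroup directly: from \eqref{HPV3}${}_{2}$ and \eqref{HPVL}${}_{2}$, $v=u-u^{L}$ with $u^{L}=\FP_{2}e^{tL}U_{0}$ solves $\partial_{t}v+\alpha v=-\frac{P'(\bar\rho)}{\bar\rho}\nabla(\rho-\rho^{L})+\mu\nabla(\phi-\phi^{L})+g_{2}$, $v|_{t=0}=0$, hence $v(t)=\int_{0}^{t}e^{-\alpha(t-s)}[\,\cdots\,]\,ds$ and $\|v(t)\|_{L^{q}}\le\int_{0}^{t}e^{-\alpha(t-s)}\big(C\|\nabla(\rho-\rho^{L})(s)\|_{L^{q}}+C\|\nabla(\phi-\phi^{L})(s)\|_{L^{q}}+\|g_{2}(s)\|_{L^{q}}\big)\,ds$. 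The first two integrand terms decay like $(1+s)^{-\frac52+\frac3{2q}}$ by the gradient-difference bounds of the previous step, and $\|g_{2}(s)\|_{L^{q}}$ decays at least that fast because each summand of $g_{2}$ contains a factor $\nabla u$ or $\nabla\rho$ (use \eqref{uLq}, \eqref{rholq}, their gradient analogues, and Gagliardo–Nirenberg for the endpoint $q=\infty$). Since $\int_{0}^{t}e^{-\alpha(t-s)}(1+s)^{-\beta}\,ds\lesssim(1+t)^{-\beta}$ by Lemma \ref{halft}, \eqref{asy.small.dec.u} follows.

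The main obstacle is the exponent bookkeeping for the quadratic nonlinearities in $\mathbb{R}^{3}$: because $\|\rho(s)\|_{L^{2}}$ and $\|u(s)\|_{L^{2}}$ decay only at the rates $(1+s)^{-3/4}$ and $(1+s)^{-5/4}$, the crude $L^{1}$-bounds on $f_{1}$ and $g_{2}$ are barely too weak near $s=t$, and one must interpolate with the faster-decaying $L^{2}$-bounds (equivalently, push the intermediate integrability exponent slightly above $1$) and — at the endpoint $q=\infty$ — combine the $L^{2}$-difference estimates with their higher-order analogues through Gagliardo–Nirenberg. The divergence structure of $g_{1}=-\nabla\cdot f_{1}$ and the derivative carried by every term of $g_{2}$ are exactly what make the half-power gain over the linear decay rates possible, which is the whole point of the decomposition.
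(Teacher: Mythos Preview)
Your approach is correct but genuinely different from the paper's for the $u$ and $\phi$ components, and the differences are worth noting.

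For $\phi$ your reduction is cleaner than the paper's. The paper treats $\phi-\FP_{3}e^{tL}U_{0}$ exactly like the $\rho$-difference, running the semigroup estimates of Proposition~\ref{thm.decayrup} again. Your observation that the $\phi$-equation of \eqref{HPV3} is already linear, so that $w=\phi-\FP_{3}e^{tL}U_{0}$ solves a damped heat equation forced only by $a(\rho-\FP_{1}e^{tL}U_{0})$, collapses \eqref{asy.small.dec.E} to \eqref{asy.small.dec.rho} in one line.

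For $u$ the paper works directly with the semigroup: it estimates $\int_{0}^{t}\FP_{2}e^{(t-s)L}[\nabla\cdot f_{1},g_{2},0]\,ds$ in $L^{2}$ and $L^{\infty}$ separately, and for the $L^{\infty}$ bound splits the integral at $t/2$, using on $[0,t/2]$ the $p=1$ version of Proposition~\ref{thm.decayrup} and on $[t/2,t]$ the variant with $\|\nabla U_{0}\|_{L^{1}}$ on the right, then interpolates. Your route through the damping ODE for $v=u-\FP_{2}e^{tL}U_{0}$ is conceptually neat, but it \emph{requires} the auxiliary bound $\|\nabla(\rho-\FP_{1}e^{tL}U_{0})\|_{L^{q}}\lesssim(1+t)^{-5/2+3/(2q)}$, which the paper never states. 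Your sketch ``choose $p\in[1,2]$ optimally'' does not quite close this: with $m=1$, $q=\infty$, a single fixed $p$ never makes both convolution exponents at least $5/2$. What does work is the same time-splitting device the paper uses for $u$ --- take $p=1$ on $[0,t/2]$ (giving $(1+t)^{-5/2}$) and $p=2$ on $[t/2,t]$ (where $\|f_{1}\|_{L^{2}}\lesssim(1+s)^{-11/4}$ is fast enough) --- so your approach is valid but ends up needing the very technique you were hoping to avoid. In short: your ODE reduction trades the paper's split argument on the $u$-integral for an equivalent split argument on the $\nabla\rho$-integral; neither is simpler, just relocated. For $\rho$ itself both arguments coincide.
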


\begin{proof}
	We rewrite each component of solutions $U=[\rho, u,\phi]$ to  \eqref{HPV3}-\eqref{NI} as the mild solution forms
	by the Duhamel's principle:
	\begin{equation}\label{opt.eq1}
	\rho(t,x)=\FP_{1}e^{tL}U_{0}+\int_0^t\FP_{1} e^{(t-s)L}[\nabla
	\cdot f_{1}(s),g_{2}(s),0]ds,
	\end{equation}
	\begin{equation}\label{opt.eq2}
	u(t,x)=\FP_{2}e^{tL}U_{0}+\int_0^t\FP_{2}e^{(t-s)L}[\nabla
	\cdot f_{1}(s),g_{2}(s),0]ds
	\end{equation}
	and
	\begin{equation*}
	\phi(t,x)=\FP_3e^{tL}U_{0}+\int_0^t\FP_3e^{(t-s)L}[\nabla
	\cdot f_{1}(s),g_{2}(s),0]ds.
	\end{equation*}
	
Denote $N(s)=[\nabla \cdot f_{1}(s),g_{2}(s),0]$. In what follows we only prove
	\eqref{asy.small.dec.rho} and \eqref{asy.small.dec.u}, and the estimate \eqref{asy.small.dec.E} can be
	proved in a similar way. One can apply the linear estimate on
	$\FP_{1}e^{tL}N_{0}$  to the mild form
	\eqref{opt.eq1}  by setting $m=0,\ q=r=2,\ s=p=1$ in Proposition \ref{thm.decayrup}, so as to obtain
	\begin{eqnarray}\label{asy.lg.prf.eq1}
\begin{aligned}
	\left\|\rho(t)-\FP_{1}e^{tL}U_{0}\right\|
	&\leq \displaystyle \int_0^t\left\|\FP_{1}e^{(t-s)L}[\nabla
	\cdot f_{1}(s),g_{2}(s),0]\right\|ds\\
	&\leq  C\int_0^t(1+t-s)^{-\frac{5}{4}}\left(\|f_{1}(s)\|_{L^{1}}+\|N(s)\|_{L^{1}\cap
		L^{2}}\right)ds.
\end{aligned}
	\end{eqnarray}
	Here the divergence form of the first source term have been used.
	Recalling the definition $\eqref{sec5.ggg}$, we can verify that
	\begin{eqnarray*}
		\|N(s)\|_{L^{1}\cap L^{2}}+\|f_{1}(s)\|_{L^{1}}\leq
		C\|U(s)\|_{2}^{2}\leq C\epsilon_{3}^{2}(U_{0})(1+s)^{-\frac{3}{2}}.
	\end{eqnarray*}
	Then substituting this estimate into \eqref{asy.lg.prf.eq1}, and using Lemma \ref{halft}, we get that
	\begin{equation*}
	\left\|
	\rho(t)-\FP_{1}e^{tL}U_{0}\right\|\leq C
	(1+t)^{-\frac{5}{4}}.
	\end{equation*}
	One can apply the linear estimate on
	$\FP_{1}e^{tL}N_{0}$  to the mild form
	\eqref{opt.eq1}  by setting $m=0,\ q=\infty, r=2,\ s=p=1$ in Proposion \ref{thm.decayrup}, so as to obtain
	\begin{eqnarray}\label{asy.lg.prf.eq3}
\begin{aligned}
	\left\|\rho(t)-\FP_{1}e^{tL}U_{0}\right\|_{L^{\infty}}
	&\leq \displaystyle \int_0^t\left\|\FP_{1}e^{(t-s)L}[\nabla
	\cdot f_{1}(s),g_{2}(s),0]\right\|_{L^{\infty}}ds\\
	&\leq  C\int_0^t(1+t-s)^{-2}\left(\|f_{1}(s)\|_{L^{1}}+\|N(s)\|_{L^{1}\cap
		\dot{H}^{2}}\right)ds.
\end{aligned}
	\end{eqnarray}
	It follows from $\eqref{sec5.ggg}$ that
	\begin{eqnarray*}
		\|N(s)\|_{L^{1}\cap \dot{H}^{2}}+\|f_{1}(s)\|_{L^{1}}\leq
		C\|U(s)\|_{3}\|\nabla U(s)\|_{2}+C\|\rho(s)\|\|u(s)\|\leq C\epsilon_{3}^{2}(U_{0})(1+s)^{-2},
	\end{eqnarray*}
where the $L^2$ time-decay rate \eqref{uL2} of $u$ has been used. 	
	Then substituting this estimate into \eqref{asy.lg.prf.eq3},  and using Lemma \ref{halft}, we get that
	\begin{equation*}
	\left\|
	\rho(t)-\FP_{1}e^{tL}U_{0}\right\|_{L^{\infty}}\leq C
	(1+t)^{-2}.
	\end{equation*}
By $L^{2}-L^{\infty} $ interpolation, we have
\begin{eqnarray*}\label{diffrhoLq}
\left\|
	\rho(t)-\FP_{1}e^{tL}U_{0}\right\|_{L^{q}}\leq C
\epsilon_{N}(U_{0})(1+t)^{-2+\frac{3}{2q}}, \ \ 2\leq q \leq \infty.
\end{eqnarray*}

	Applying the linear estimate on $\FP_{2}e^{tL}N_{0}$ to the mild form \eqref{opt.eq2} by letting
	$m=0,\ q=r=2,\ s=p=1$ in Proposition \ref{thm.decayrup}
	gives
	\begin{eqnarray}\label{asy.lg.prf.eq2}
\begin{aligned}
	\left\|u (t)-\FP_{2}e^{tL}U_{0}\right\|
	&\leq \displaystyle \int_0^t\left\|\FP_{2}e^{(t-s)L}[\nabla
	\cdot f_{1}(s),g_{2}(s),0]\right\|ds\\
	&\leq  C\int_0^t(1+t-s)^{-\frac{7}{4}}\left(\|f_{1}(s)\|_{L^{1}\cap \dot{H}^{2}}+\|N(s)\|_{L^{1}\cap
		L^{2}}\right)ds.
\end{aligned}
	\end{eqnarray}
	Using the time-decay rates \eqref{rate.brho} and \eqref{uL2}, we can estimate $L^{1}$ norm and $\dot{H}^{2}$ norm of  $f_{1}$ as follows:
	\begin{eqnarray*}
		\|f_{1}(s)\|_{L^{1}}\leq \|u\| \|\rho\|\leq
		C\epsilon_{3}^{2}(U_{0})(1+s)^{-\frac{5}{4}}(1+s)^{-\frac{3}{4}}\leq
		C\epsilon_{3}^{2}(U_{0})(1+s)^{-2},
	\end{eqnarray*}
	\begin{eqnarray*}
		\|f_{1}(s)\|_{\dot{H}^{2}}\leq \|u\|_{L^{\infty}} \|\nabla^2\rho\|+\|\rho\|_{L^{\infty}} \|\nabla^2u\|\leq C\|\nabla U\|_{2}^2\leq
		C\epsilon_{3}^{2}(U_{0})(1+s)^{-\frac{5}{2}}.
	\end{eqnarray*}
	For other terms with the first-order derivative, like $\rho\nabla
	\cdot u$, one has
	\begin{eqnarray*}
		\|\rho\nabla \cdot u\|_{L^{1}\cap L^{2}}\leq \|\nabla
		u\| \|\rho\|+\|\rho\|_{L^{\infty}}\|\nabla
		u\| \leq
		C\epsilon_{3}(U_{0})(1+s)^{-\frac{5}{4}}\epsilon_{3}
		(U_{0})(1+s)^{-\frac{3}{4}}\leq C\epsilon_{3}^{2}(U_{0})(1+s)^{-2},
	\end{eqnarray*}
	and similarly it follows that
	\begin{eqnarray*}
		\|u \cdot\nabla\rho \|_{L^{1}\cap L^{2}}+\|u
		\cdot\nabla u\|_{L^{1}\cap L^{2}}+\|\rho\nabla
		\rho\|_{L^{1}\cap L^{2}}\leq C\epsilon_{3}^{2}(U_{0})(1+s)^{-2}.
	\end{eqnarray*}
	Plugging the above inequalities into \eqref{asy.lg.prf.eq2}, and using Lemma \ref{halft}, we get
	\begin{equation*}
	\left\|u(t)-\FP_{2}e^{tL}U_{0}\right\|\leq
	C (1+t)^{-\frac{7}{4}}.
	\end{equation*}

	Before estimating the $L^{\infty}$-norm, we first give another $L^{p}$-$L^{q}$ time decay estimates on $u$.  It follows from the second estimate \eqref{error2} in Lemma \ref{errorL} and expression \eqref{ubar} that
	\begin{multline*}
	\|\nabla
	^{m}u(t)\|_{L^q}\leq C(1+t)^{-\frac{3}{2}(\frac{1}{p}-\frac{1}{q})-\frac{m+1}{2}}\|\rho_{0}\|_{L^{p}}
	+C(1+t)^{-\frac{3}{2}(\frac{1}{p}-\frac{1}{q})-\frac{m+1}{2}}\|\nabla U_{0}\|_{L^{p}}+C{e^{-\lambda t}}\|\nabla^mU_{0}\|_{L^{s}}\\
	+C{e^{-\lambda t}}
\|\nabla^{m+[3(\frac{1}{r}-\frac{1}{q})]_+}U_{0}\|_{L^r}+C{e^{-\lambda t}}\|\nabla^{m+1+[3(\frac{1}{r}-\frac{1}{q})]_+}[\rho_{0},\phi_{0}]\|_{L^{r}}.
	\end{multline*}
	Applying the linear estimate on $\FP_{2}e^{tL}N_{0}$ to the mild form \eqref{opt.eq2} by setting
	$m=0,\ q=\infty,\ r=2,\ s=p=1$ to the second estimate over $\left[0,\frac{t}{2}\right]$ in Proposition \ref{thm.decayrup}, and by setting
	$m=0,\ q=\infty,\ s=r=2,\ p=1$ to the above estimate over $\left[\frac{t}{2},t\right]$ give
	\begin{equation}\label{asy.lg.prf.eq4}
	\begin{aligned}
	\left\|u (t)-\FP_{2}e^{tL}U_{0}\right\|_{L^{\infty}}
	&\leq  \displaystyle \int_0^t\left\|\FP_{2}e^{(t-s)L}[\nabla
	\cdot f_{1}(s),g_{2}(s),0]\right\|_{L^{\infty}}ds\\
	&\leq  C\int_0^{\frac{t}{2}}(1+t-s)^{-\frac{5}{2}}\left(\|f_{1}(s)\|_{L^{1}\cap \dot{H}^4}+\|N(s)\|_{L^{1}\cap
	\dot{H}^{2}}\right)ds\\
		&+C\int_{\frac{t}{2}}^t (1+t-s)^{-2}\left(\|\nabla \cdot f_{1}(s)\|_{L^{1}}+\|\nabla N(s)\|_{L^{1}}\right)ds\\
		&+C\int_{\frac{t}{2}}^t e^{-\lambda(t-s)}\left(\|N(s)\|_{L^{2}}+\|\nabla^2N(s)\|_{L^{2}}+\|\nabla^3\left(\nabla \cdot f_{1}(s)\right)\|_{L^{2}}\right)ds.
		\end{aligned}
	\end{equation}
	From the definition $\eqref{sec5.ggg}$ and the time-decay rates \eqref{rate.brho} and \eqref{uL2}, we have
	\begin{eqnarray*}
	\begin{aligned}
		\|f_{1}(s)\|_{L^{1}\cap \dot{H}^4}\leq &\|u\| \|\rho\|+\|U(s)\|_{3} \|\nabla U(s)\|_{3}\\[2mm]
		\leq
		&C\epsilon_{4}^{2}(U_{0})(1+s)^{-\frac{5}{4}}(1+s)^{-\frac{3}{4}}\\[2mm]
		\leq
		&C\epsilon_{4}^{2}(U_{0})(1+s)^{-2},
		\end{aligned}
	\end{eqnarray*}
	and
	\begin{eqnarray*}
		\|N(s)\|_{L^{1}\cap \dot{H}^{2}}\leq \|U(s)\|_{3} \|\nabla U(s)\|_{2}\leq
		C\epsilon_{3}^{2}(U_{0})(1+s)^{-\frac{3}{4}}(1+s)^{-\frac{5}{4}}\leq
		C\epsilon_{3}^{2}(U_{0})(1+s)^{-2}.
	\end{eqnarray*}
	For the  terms in $\nabla \cdot f_{1}$, by using the time-decay rates \eqref{sec5.highBE}, \eqref{sec5.highu1}, \eqref{rate.brho} and \eqref{uL2}, one has
	\begin{eqnarray*}
		\|u\cdot \nabla \rho\|_{L^{1}}\leq \|u\|\|\nabla \rho\|\leq
		C\epsilon_{3}(U_{0})(1+s)^{-\frac{5}{4}}\epsilon_{3}
		(U_{0})(1+s)^{-\frac{5}{4}}\leq C\epsilon_{3}^{2}(U_{0})(1+s)^{-\frac{5}{2}},
	\end{eqnarray*}
	\begin{eqnarray*}
	\|\rho\nabla \cdot u\|_{L^{1}}\leq  \|\rho\|\|\nabla \cdot u\|\leq
		C\epsilon_{3}(U_{0})(1+s)^{-\frac{3}{4}}\epsilon_{3}
		(U_{0})(1+s)^{-\frac{7}{4}}\leq C\epsilon_{3}^{2}(U_{0})(1+s)^{-\frac{5}{2}}.
	\end{eqnarray*}
	Similarly, it follows that
	\begin{eqnarray*}
		\|\nabla (\rho \nabla \rho)\|_{L^{1}}\leq \|\nabla \rho \|^2+\|\rho\|\|\nabla ^2\rho\|,
	\end{eqnarray*}
	where $\|\nabla ^2\rho \|$ is bouned by $\epsilon_{4}(U_{0})(1+s)^{-\frac{7}{4}}$ in \eqref{sec5.highrho} and hence,
	\begin{eqnarray*}
		\|\nabla (\rho \nabla \rho)\|_{L^{1}}\leq \|\nabla \rho \|^2+\|\rho\|\|\nabla ^2\rho\|\leq \epsilon_{4}^{2}(U_{0})(1+s)^{-\frac{5}{2}}.
	\end{eqnarray*}
	Moreover,
	\begin{eqnarray*}
	\|N(s)\|_{L^{2}}+\|\nabla^2N(s)\|_{L^{2}}+\|\nabla^3\left(\nabla \cdot f_{1}(s)\right)\| \leq \|\nabla U \|_{3}^2\leq \epsilon_{4}^{2}(U_{0})(1+s)^{-\frac{5}{2}}.
	\end{eqnarray*}
	Plugging the above inequalities into \eqref{asy.lg.prf.eq4} and using Lemma \ref{halft}, one has
	\begin{equation*}
	\left\|u(t)-\FP_{2}e^{tL}U_{0}\right\|_{L^{\infty}}\leq
	C (1+t)^{-\frac{5}{2}},
	\end{equation*}
which together with $L^{2}-L^{\infty} $ interpolation leads to
\begin{eqnarray*}
\left\|u(t)-\FP_{2}e^{tL}U_{0}\right\|_{L^{q}}\leq C
\epsilon_{4}(U_{0})(1+t)^{-\frac{5}{2}+\frac{3}{2q}}
\end{eqnarray*}
for $2\leq q \leq \infty$.
	This completes the proof of Proposition \ref{asy.small.lem}.
\end{proof}

\subsection{Proof of Theorem \ref{thm.GE}}
With all necessary {\it a priori} estimates derived in preceding sections, we are ready to prove Theorem \ref{thm.GE}. By Theorem \ref{pro.2.1}, \eqref{rholq} and \eqref{uLq}, we get the global existence of solutions to \eqref{HPV1}, \eqref{1.2} with  \eqref{gl1}-\eqref{LQ} in Theorem \ref{thm.GE}. It remains only to show \eqref{nu.decay3}.

For the solution $U_2(x,t)=[\rho_2,u,\phi_2]$ of the
Cauchy problem \eqref{HPV3}-\eqref{NI} and the desired large-time asymptotic profile
$\tilde{U}(x,t)=[\tilde{\rho},\tilde{u},\tilde{\phi}]$, their difference
can be rewritten as
\begin{equation*}
U_2-\tilde{U}=(U_2-e^{tL}U_0) +(e^{tL}U_0-\tilde{U}),
\end{equation*}
that is,
\begin{equation*}
\begin{aligned}
\rho-\bar{\rho}-\tilde{\rho}=\rho_2-\tilde{\rho}&=\left(\rho_2-\FP_{1}e^{
	tL}U_0\right)+\left(\FP_{1}e^{ tL}U_{0}-\tilde{\rho}\right),\\
u-\tilde{u}&=\left(u-\FP_{2}e^{
	tL}U_0\right)+\left(\FP_{2}e^{tL}U_{0}-\tilde{u}\right),\\
\phi-\bar{\phi}-\tilde{\phi}=\phi_2-\tilde{\phi}&=\left(\phi_2-\FP_3e^{
	tL}U_0\right)+\left(\FP_3e^{tL}U_{0}-\tilde{\phi}\right).
\end{aligned}
\end{equation*}
Recall that the solution $[\rho_1,u,\phi_1]$ of the linearized Cauchy problem \eqref{HPVL}-\eqref{NLI} can be written as  $[\rho_1, u, \phi_1]= [\FP_{1}, \FP_{2}, \FP_{3}]e^{ tL}U_{0}$. Then \eqref{nu.decay3} follows from Proposition \ref{thm.decaypar} (replacing $[\rho,u, \phi]$ by $[\rho_1,u, \phi_1]$) and Proposition \ref{asy.small.lem} (replacing $[\rho,u, \phi]$ by $[\rho_2,u, \phi_2]$). This completes the proof of Theorem \ref{thm.GE}.

\medskip

\noindent{\bf Acknowledgments:}
Q.Q. Liu was supported by the National Natural Science
Foundation of China (No. 12071153), and the Fundamental Research Funds for the Central Universities (No. 2020ZYGXZR032). H.Y. Peng was supported from the National Natural Science Foundation of China (No. 11901115), Natural Science Foundation of Guangdong Province (No. 2019A1515010706) and Grant from GDUT (No. 220413228). Z.A. Wang was supported in part by the Hong Kong Research Grant Council  General Research Fund No. PolyU 15304720 (project id: P0032967).

\medskip

\end{document}